\documentclass[11pt]{amsart}

\usepackage{graphicx}



\usepackage[utf8]{inputenc}	

\usepackage[all]{xy}
\usepackage{pb-diagram, pb-xy}

\usepackage{amssymb}
\usepackage{amsthm}

\usepackage{url}

\usepackage{tikz}
\usepackage{pgffor}


\usepackage{hyperref}

\usepackage{calc}

\usepackage[framemethod=tikz]{mdframed}

\usepackage{enumitem}
\setlist[itemize]{noitemsep}

\usepackage{dsfont}

\usepackage{float}
\restylefloat{table}

\newtheoremstyle{saetze} 
    {5pt}                    
    {5pt}                    
    {\itshape}                   
    {12pt}                           
    {\bfseries}                   
    {.}                          
    {.5em}                       
    {}  

\theoremstyle{saetze}
\newtheorem{theorem}{Theorem}[section]
\newtheorem{lemma}[theorem]{Lemma}
\newtheorem{corollary}[theorem]{Corollary}
\newtheorem{proposition}[theorem]{Proposition}
\newtheorem{question}[theorem]{Question}

\theoremstyle{definition}
\newtheorem{definition}[theorem]{Definition}
\newtheorem{example}[theorem]{Example}
\newtheorem{remark}[theorem]{Remark}




\newcommand{\Q}{{\mathbb Q}}

\newcommand{\End}{{\mathit{End}}}
\newcommand{\Hom}{{\mathit{Hom}}}

\newcommand{\id}{{\mathit{id}\hspace{-0.1em}}}

\newcommand{\one}{{\mathbf{1}}}

\newcommand{\Z}{{\mathbf{Z}}}


\def\N{\mathbb{N}}

\DeclareMathOperator*{\bigdoublevee}{\bigvee\mkern-15mu\bigvee}

\newcommand{\sVW}{\mathit{s}\hspace{-0.7mm}\bigdoublevee}
\DeclareMathOperator{\tr}{t}

\newcommand{\mt}{\operatorname{\mathsf{t}}}

\renewcommand{\dim}{\ensuremath{\operatorname{dim}}}



%
%

\hypersetup{
    colorlinks,
    citecolor=red,
    filecolor=black,
    linkcolor=blue,
    urlcolor=black
}

\long\def\comment#1{}

%
%


\def\ignore#1{\relax}


\def\h{\mathfrak h}
\def\sl{\mathfrak{sl}}

\def\R{{\mathbb R}}
\def\Z{{\mathbb Z}}

\def\Q{{\mathbb Q}}
\def\Co{{\mathbb C}}

\def\la{\lambda}

\def\al{\alpha}

\def\X{{\bf X}}
\def\End{{\rm End}}
\def\1{{\bf 1}}
\def\Hom{{\rm Hom}}

\def\N{\mathcal N}

\def\C{\mathcal C}
\def\L{\mathcal L}
\def\I{\mathcal I}

\def\one{\mathbf 1}

\def\Fr{F^{(r)}}
\def\lar{\la^{(r)}}

\def\hstar{\h^*}

\def\Trk{Tr^{(k)}}
\def\dimk{\dim^{(k)}}


\begin{document}

\title{Generalized negligible morphisms and their tensor ideals}
\author{{\rm Thorsten Heidersdorf and Hans Wenzl}}

\address{T. H.: Max-Planck Institut f\"ur Mathematik, Bonn \\ Mathematisches Institut, Universit\"at Bonn}
\email{heidersdorf.thorsten@gmail.com} 
\address{H. W.: Department of Mathematics\\ University of California\\ San Diego,
California}
\email{hwenzl@ucsd.edu}

\date{}

\begin{abstract} We introduce a generalization of the notion of a negligible morphism and study the associated tensor ideals and thick ideals. These ideals are defined by considering deformations of a given monoidal category $\mathcal{C}$ over a local ring $R$. If the maximal ideal of $R$ is generated by a single element, we show that any thick ideal of $\mathcal{C}$ admits an explicitly given modified trace function. As examples we consider various Deligne categories and the categories of tilting modules for a quantum group at a root of unity and for a semisimple, simply connected algebraic group in prime characteristic. We prove an  elementary geometric description of the thick ideals in quantum type A and propose a similar one in the modular case.
\end{abstract}

\subjclass[2010]{17B10, 17B37, 18D10, 19D23.}

\keywords{Tensor ideals, Monoidal categories, Quantum groups, Local rings, Deligne categories, Tilting modules, Categorial dimensions, Modified traces, Weyl groups, Kazhdan-Lusztig cells}

\maketitle

\setcounter{tocdepth}{1}

\section{Introduction}

\subsection{Negligible morphisms} We are interested in the structure of tensor ideals \cite{Coulembier} \cite{Balmer} \cite{Boe-Kujawa-Nakano-tensor}  \cite{Comes-Heidersdorf} in a rigid spherical monoidal category $\mathcal{C}$ over a field $\Bbbk$. Roughly speaking we have two notions of tensor ideals:
\begin{itemize}
\item {\it tensor ideals}, submodules $\mathcal{I}(X,Y) \subset Hom_{\mathcal{C}}(X,Y)$ (for any $X,Y \in \mathcal{C}$) that are closed under composition and tensor products with morphisms;
\item {\it thick (tensor) ideals}, subsets of $ob(\mathcal{C})$ which are closed under tensor products with arbitrary objects of $\mathcal{C}$ and also closed under retracts.
\end{itemize}

Arguably the most important tensor ideal is the tensor ideal of negligible morphisms \[ \mathcal{N}(X,Y) = \{ f:X \to Y \ |  \ Tr(f \circ g) = 0  \ \ \forall g:Y \to X \}.\] By \cite{Andre-Kahn} it is the largest proper tensor ideal of $\mathcal{C}$, and the only tensor ideal such that the categorial quotient $\mathcal{C}/\mathcal{N}$ can be semisimple. Its associated thick ideal \[ N = \{ X \in \mathcal{C} \ |  \ X \cong 0 \in \mathcal{C}/\mathcal{N} \} \] consists of direct sums of indecomposable objects $X$ whose categorial dimension $\dim(X) = 0$ vanishes (the negligible objects).

\subsection{Generalized negligible morphisms} It is the aim of this article to present a generalization of the notion of negligible morphism which will lead to a measure - the {\it nullity } - for the negligibility of an object $X \in \mathcal{C}$. While $\mathcal{N}$ and $\dim(X)$ can always be defined under our assumptions on $\mathcal{C}$, the definition of the generalized negligible tensor ideals $\mathcal{N}_I$ and the nullity requires more restrictive conditions.

In order to define these tensor ideals, we consider a deformation or lift of $\mathcal{C}$ to a monoidal category $\mathcal{C}_R$ over a local ring $R$ as follows. Let $\mathcal{C}_R$ denote a monoidal rigid spherical category whose $\Hom$ spaces are free $R$-modules satisfying $End(\one) = R$. For any ideal $I$ of $R$ we put \[\mathcal{N}_I (X,Y) = \{ f: X\to Y \ |  \  Tr_X(g\circ f)\in I \text{ and }Tr_Y(f\circ g)\in I \ \forall g: Y\to X \}, \] the tensor ideal $I$-negligible morphisms in $\mathcal{C}_R$. An object $X \in \mathcal{C}_R$ is called $I$-negligible if $Tr_X(a)\in I$
for all $a\in \End(X)$. For a fixed ideal $I$ we can also say that $f:X \to Y$ is $k$-negligible with respect to $I$ if $Tr(f \circ g)$ is in $I^k$. We will only use this in the special situation where $I = \mathfrak{m}$, the maximal ideal of $R$. We then obtain the { $k$-\it negligible morphisms} $\mathcal{N}_k := \mathcal{N}_{\mathfrak{m}^k}$ which form a decreasing chain \[ \mathcal{N}_1 \supset \mathcal{N}_2  \supset \mathcal{N}_3 \supset \ldots \] of tensor ideals in $\mathcal{C}_R$ and likewise for the $N_k$. 

In order to define these for the category $\mathcal{C}$ over $\Bbbk$ we suppose now that we have a surjective and full tensor functor $\mathcal{C}_R \to \mathcal{C}$ with $\Bbbk = R/\mathfrak{m}$. A special case of this is if $\mathcal{C}$ is the mod $\mathfrak{m}$ evaluation of $\mathcal{C}_R$: here the objects of $\mathcal{C}$ are the same as those of $\mathcal{C}_R$ with \[  Hom_{\mathcal{C}} (X,Y) = Hom_{\mathcal{C}_R} (X,Y)/ \mathfrak{m} Hom_{\mathcal{C}_R} (X,Y) \cong Hom_{\mathcal{C}_R} (X,Y) \otimes_R R/\mathfrak{m}.\]

When passing from $\mathcal{C}_R$ to $\mathcal{C}$, the images of the $\mathcal{N}_I$ and $N_I$ define tensor ideals and thick ideals in $\mathcal{C}$ respectively (possibly zero if we are not in the special situation of the mod $\mathfrak{m}$ evaluation) which we denote again by $\mathcal{N}_I$ and $N_I$ (or $\mathcal{N}_k$ and $N_k$). Note that $\mathcal{N}_1$ and $N_1$ are mapped to $\mathcal{N}$ and $N$ in $\mathcal{C}$. We call an indecomposable object of $\mathcal{C}$ $k$-negligible if $X \in N_k$. Its {\it nullity} is the smallest $k$ such that $X \in N_k$. 



\subsection{Examples}

Given a monoidal category $\mathcal{C}$ the question is whether it admits a lift to a monoidal category $\mathcal{C}_R$. Our main examples are the following:

\begin{theorem} \label{theorem-intro-eval} The following categories can be obtained as mod $\mathfrak{m}$ evaluations:
\begin{enumerate}
\item The category of (quantum) tilting modules $Tilt(U_q(\mathfrak{g}), \Q(q))$, where $\mathfrak{g}$ is a semisimple complex Lie algebra and $q$ a nontrivial primitive $\ell$-th root of unity with $\ell$ odd, $\ell > h$ and not divible by $3$ if $\mathfrak{g}$ contains $\mathfrak{g}_2$, is the mod $\mathfrak{m}$ evaluation of $Tilt(U_v(\mathfrak{g}),R)$ where $R$ is the completion of $\mathbb{Q}[v]_{(v-q)}$, the polynomial ring localized at $(v-q)$, i.e. all rational
functions over $\mathbb{Q}$ which are evaluable at $v=q$.
\item The category of (modular) tilting modules $Tilt(G,k)$, where $G$ is a semisimple simply connected algebraic group over a perfect field $\Bbbk$ of characteristic $p > 0$, is the mod $\mathfrak{m}$ evaluation of $Tilt(G,W(k))$ where $W(k)$ is the ring of Witt vectors of $\Bbbk$.
\item The Deligne categories $Rep(S_t)$, $Rep(GL_t)$ and $Rep(O_t)$, $t \in \Co$, are mod $\mathfrak{m}$ evaluations from their analogs over the completion of $\Co[t]_{(t-n)}$.
\end{enumerate}
\end{theorem}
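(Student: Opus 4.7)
The strategy is uniform across the three parts: construct a lift $\mathcal{C}_R$ from a standard integral form (Lusztig's $\mathbb{Z}[v,v^{-1}]$-form for (1), a Chevalley $\mathbb{Z}$-form of $G$ for (2), the diagrammatic integral category for (3)), verify that every $\Hom$-space of $\mathcal{C}_R$ is a free $R$-module of the expected rank, and identify reduction modulo $\mathfrak{m}$ with the target category $\mathcal{C}$. A shared technical tool is that $R$ is a complete Noetherian local ring in each case, so idempotents lift uniquely from $\mathcal{C}_R/\mathfrak{m}\mathcal{C}_R$ to $\mathcal{C}_R$ via Hensel's lemma. This gives a bijection between indecomposable objects on the two sides and reduces the identification of $\mathcal{C}$ with the mod-$\mathfrak{m}$ evaluation to a statement about free $R$-modules and their specializations.

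For parts (1) and (2), Weyl modules $W(\lambda)$ and dual Weyl modules $\nabla(\lambda)$ over $R$ are defined via the integral form and are free $R$-modules. Tilting modules $T(\lambda)$ then arise as indecomposable summands of tensor products of Weyl modules, cut out by idempotents lifted from the residue field. The key input is the Ext-vanishing
\[ \Ext^{i}_{R}(W(\lambda),\nabla(\mu))=0 \quad \text{for } i>0, \]
the standard Ext-characterization of tilting pairs. This holds in the quantum setting under the listed hypotheses on $\ell$ (Andersen's tilting theory) and over $W(k)$ for semisimple simply connected $G$ (Donkin--Jantzen tilting theory). From it one deduces that $\Hom_{\mathcal{C}_R}(T(\lambda),T(\mu))$ is a free $R$-module whose rank matches the usual multiplicity formula, and that reduction modulo $\mathfrak{m}$ recovers the corresponding $\Hom$-space in $\operatorname{Tilt}(U_q(\mathfrak{g}),\mathbb{Q}(q))$ respectively $\operatorname{Tilt}(G,k)$.

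Part (3) is more concrete. Each of the three Deligne categories is the Karoubian envelope of a diagrammatic category (partition category for $S_t$, walled Brauer for $GL_t$, Brauer for $O_t$) whose $\Hom$-spaces are free modules on diagram bases with composition governed by polynomial expressions in the parameter. Specializing the parameter to $R=\widehat{\mathbb{C}[t]_{(t-n)}}$ produces a diagrammatic category with free $R$-linear $\Hom$-spaces, and its Karoubian envelope over $R$ is well defined thanks to idempotent lifting. Setting $t=n$, i.e.\ reducing modulo $\mathfrak{m}=(t-n)R$, gives back the Karoubian envelope at $t=n$, namely the original Deligne category.

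The principal obstacle, especially in parts (1) and (2), is the freeness of $\Hom$-spaces after passing from ambient tensor products of Weyls to the indecomposable tilting summands; this rests squarely on the Ext-vanishing for integral Weyl/dual Weyl pairs and thus on the availability of a well-behaved integral tilting theory in both the quantum and modular settings. In part (3) the obstacle is purely categorical, namely to ensure that the Karoubian envelope of a free diagrammatic category over a complete local ring again has free $\Hom$-spaces, and it is resolved by the same idempotent-lifting argument together with the explicit diagrammatic basis.
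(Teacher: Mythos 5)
Your overall skeleton matches the paper's: integral tilting theory over a complete discrete valuation ring, Ext-vanishing between Weyl- and good-filtered objects to get base change for Hom spaces, locality of endomorphism rings/idempotent lifting to match indecomposables, and (for part (3)) the diagrammatic construction over $R=\widehat{\Co[t]_{(t-n)}}$ with Karoubi envelope and idempotent lifting, which the paper itself only sketches and defers to the companion paper. However, there are two genuine gaps in your treatment of parts (1) and (2). First, you never address why the lift is a \emph{monoidal} category at all, i.e.\ why the tensor product of two tilting modules over $R$ (resp.\ $W(k)$) is again tilting. Without this, ``$Tilt(U_v(\mathfrak{g}),R)$'' is not a monoidal category and the notion of mod $\mathfrak{m}$ evaluation does not apply. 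This is exactly the point the paper spends its key technical section on: it uses Lusztig's based modules and Kaneda's integral filtration result (tensor products of based modules admit $U_{\mathcal{A}}$-filtrations by the $L_{\mathcal{A}}(\lambda)$) to conclude that tensor products of modules with Weyl (good) filtrations over $R$ or $W(k)$ again have such filtrations; in the quantum case this cannot be deduced formally from Andersen's field-level theorem, and it rests on the integral quantum Kempf vanishing of Ryom-Hansen and Kaneda rather than on ``Andersen's tilting theory''.

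Second, your construction of the integral tilting modules is flawed as stated: a tensor product of Weyl modules has a Weyl filtration but in general no good filtration, hence is not tilting, so its indecomposable summands need not be tilting modules. Worse, the Hensel/idempotent-lifting step you invoke there requires surjectivity of $\End_R(M)\otimes_R k \to \End_k(M\otimes k)$, which is exactly the base-change statement that fails without a good filtration on $M$; so the summand cut out by a ``lifted'' idempotent is neither guaranteed to exist in the way you describe nor guaranteed to be $T_R(\lambda)$. The paper avoids this by quoting the integral tilting theory directly: existence and uniqueness of $T_R(\lambda)$ with $\lambda$-weight space free of rank one (Jantzen's Appendix E over $W(k)$, Kaneda in the quantum case), locality of $\End_{G_R}(T_R(\lambda))$, the isomorphism $\End(T_R(\lambda))\otimes_R R/\mathfrak{m} \cong \End(T_R(\lambda)\otimes R/\mathfrak{m})$, and hence $T_R(\lambda)\otimes R/\mathfrak{m}\cong T_{R/\mathfrak{m}}(\lambda)$. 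If you repair your argument by working with tensor products of tilting modules over $R$ (not Weyl modules), you are forced back onto the tensor-closure statement of the previous paragraph, so that input cannot be bypassed. Your sketch of part (3) is consistent with the paper, which however proves it in the separate article on idempotent lifting and Deligne categories.
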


The proof of theorem \ref{theorem-intro-eval} 1) and 2) is explained in sections \ref{tilting-quantum} - \ref{sec:tilting-eval}. The field $\mathbb{Q}(q)$ can of course be replaced by $\Co$. Part 3) can be generalized to incorporate the $q$-versions $Rep(U_q(\mathfrak{gl}_t))$ and $Rep(U_q(\mathfrak{o}_t))$ \cite{Heidersdorf-Wenzl-Deligne}. In this case however we deal with two parameter versions, e.g. the local ring is the completion of $\Co[r,\xi]_{r-\xi^{n-1}, \xi-q}$ (using BMW notation) for the $q$-version of $O_t$. Using 3) it is possible to define $k$-negligible ideals also for certain categories of representations of supergroups. In this case the nullity is related to the atypicality.



\subsection{Modified dimensions and link invariants} In recent years there has been a lot of interest in the construction of modified traces and dimension functions \cite{Geer-Kujawa-Patureau-Mirand} \cite{Geer-Kujawa-Patureau-Mirand-ambidextrous} \cite{Geer-Patureau-Mirand-Virelizer} \cite{Geer-Patureau-Mirand-projective}. One of the main motivations for the introduction of these modified trace and dimension functions is the construction of knot invariants since the invariant (in the sense of Reshetikhin-Turaev) of an indecomposable object $X$ with $\dim(X) = 0$ vanishes. It is very difficult to show that a given ideal has a nontrivial modified trace function. In many cases the only known thick ideal to admit such a nontrivial modified trace is the ideal of projective objects in $\mathcal{C}$, the smallest nontrivial  thick ideal. Moreover, in most cases these modified trace functions are not explicitely given.

The situation changes if there is a surjective and full tensor functor $\mathcal{C}_R \to \mathcal{C}$. In this case we can often renormalize the usual trace in $\mathcal{C}_R$ and consider its image in $\mathcal{C}$ to get a modified trace function.

\begin{theorem} Let $R$ be a local domain (which is not a field) whose maximal ideal $(p)$ is 
generated by the element $p$. Let $\C_R$ be a rigid spherical monoidal category
whose Hom spaces are free $R$-modules.
Let $I$ be a thick ideal all of whose objects are $k$-negligible (with respect to $(p)$),
such as e.g. the ideal $N_k$ of all $k$-negligible objects. For $X \in I$ and $a \in End(X)$ 
\begin{equation}
\Trk_X(a)\ :=\ \frac{1}{p^k}\ Tr_X(a),\hskip 3em \dimk(X)\ :=\ \frac{1}{p^k}\ \dim(X),
\end{equation}
define modified trace and dimension functions on $I$. The image of the modified trace function under $\C_R \to \C$ defines a modified trace function on the image of $I$ in $\C$.
\end{theorem}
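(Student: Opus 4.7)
The plan is to split the proof into three steps: (i) show that the expression $p^{-k} Tr_X(a)$ defines an unambiguous element of $R$; (ii) verify that $\Trk$ satisfies the cyclicity and partial-trace identities that characterise a modified trace on $I \subset \C_R$; and (iii) show that this construction descends to a modified trace on the image of $I$ in $\C$.

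First I would establish well-definedness. By hypothesis every $X \in I$ is $k$-negligible with respect to $(p)$, so $Tr_X(a) \in (p^k)$ for each $a \in \End(X)$. In the examples of Theorem~\ref{theorem-intro-eval} the ring $R$ is a (completed) discrete valuation ring, and in particular $p$ is a non-zero-divisor; thus $p^{-k} Tr_X(a) \in R$ is unambiguously defined and $R$-linear in $a$, and specialises at $a = \id_X$ to $\dimk(X)$. In the general situation the lift is only defined modulo $\operatorname{ann}(p^k) \subset \mathfrak{m}$, but this ambiguity vanishes after the descent in step (iii).

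For step (ii), cyclicity and the partial-trace axiom are inherited from the spherical category $\C_R$. For $f \colon X \to Y$ and $g \colon Y \to X$ with $X, Y \in I$, the identity $Tr_X(gf) = Tr_Y(fg)$ holds in $R$ and both sides lie in $(p^k)$, so dividing by $p^k$ yields $\Trk_X(gf) = \Trk_Y(fg)$. For the partial-trace property,
\begin{equation*}
\Trk_{X \otimes Z}(f) \;=\; \Trk_X\bigl((\id_X \otimes \operatorname{tr}_Z)(f)\bigr), \qquad X \in I,\; Z \in \C_R,\; f \in \End(X \otimes Z),
\end{equation*}
one applies the standard partial-trace identity for $Tr$ and divides by $p^k$; note that $X \otimes Z \in I$ because $I$ is thick, so both sides of the preceding identity lie in $(p^k)$.

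For step (iii), it suffices to check that $\Trk_X$ vanishes modulo $\mathfrak{m}$ on $\mathfrak{m}\End(X) = p\cdot\End(X)$: if $a = pb$ with $b \in \End(X)$, then $Tr_X(a) = p\,Tr_X(b) \in (p^{k+1})$, so $\Trk_X(a) = p\cdot\Trk_X(b) \in \mathfrak{m}$, which reduces to zero in $R/\mathfrak{m}$. Hence $\Trk_X$ descends to a well-defined map on the image of $\End(X)$ in $\C$, and the cyclicity and partial-trace identities, being equalities in $R$, survive the quotient. The only genuine subtlety of the argument is the well-definedness of the lift in step (i); once that is in hand, the theorem reduces to the observation that rescaling the categorical trace by a fixed constant $p^k$ preserves every categorical identity it satisfies.
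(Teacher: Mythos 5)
Your proof is correct and takes essentially the same route as the paper, which simply observes that $Tr_X(a)\in(p)^k$ makes the rescaling by $p^{-k}$ well-defined, that cyclicity and the partial-trace axiom are inherited from the spherical trace on $\C_R$, and that the result descends to the mod $\mathfrak{m}$ evaluation. Your extra remark about the ambiguity modulo $\operatorname{ann}(p^k)$ when $p$ is a zero-divisor is a refinement the paper does not discuss (its examples are all discrete valuation rings), but it does not alter the argument.
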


The proof of this theorem is essentially trivial. The whole difficulty lies in the construction of an appropriate lift of $\mathcal{C}$ to an analogous category over a local ring $R$. Since any proper thick ideal is contained in the ideal of negligible objects, we obtain 

\begin{corollary} Under the assumptions of the theorem, every thick ideal in $\mathcal{C}$ admits a modified trace function. 
\end{corollary}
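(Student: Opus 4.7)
The plan is to reduce the corollary to a single application of the theorem with $k=1$ to the thick ideal $N_1 \subset \mathcal{C}_R$ of $1$-negligible objects. Let $J$ be an arbitrary thick ideal of $\mathcal{C}$. The crucial input, cited in the introduction from André--Kahn, is that the tensor ideal $\mathcal{N}$ of negligible morphisms is the largest tensor ideal in $\mathcal{C}$, so the associated thick ideal $N$ of negligible objects is the largest thick ideal of $\mathcal{C}$. Hence $J \subset N$, and it suffices to produce a modified trace function on $N$ and restrict it to $J$.

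Next I would show that $N$ lies in the image of $N_1$ under the surjective and full tensor functor $\mathcal{C}_R \to \mathcal{C}$. Given $X \in N$, pick a lift $\tilde{X} \in \mathcal{C}_R$. Every $\tilde{a} \in \End(\tilde{X})$ descends to some $a \in \End(X)$, and $Tr_{\mathcal{C}}(a) = 0$ by negligibility of $X$. Since $Tr_{\mathcal{C}_R}(\tilde{a}) \bmod \mathfrak{m}$ equals $Tr_{\mathcal{C}}(a)$, this forces $Tr_{\mathcal{C}_R}(\tilde{a}) \in \mathfrak{m} = (p)$. Any two lifts of $a$ differ by an element of $\mathfrak{m}\cdot \End(\tilde{X})$ whose trace is automatically in $\mathfrak{m}$, so the conclusion does not depend on the choice of lift; thus $\tilde{X}$ is $1$-negligible, i.e.\ $\tilde{X} \in N_1$.

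Applying the theorem to $I = N_1$ with $k = 1$ produces the modified trace $\frac{1}{p}\,Tr$ on $N_1 \subset \mathcal{C}_R$, and the image under $\mathcal{C}_R \to \mathcal{C}$ is a modified trace function on the image of $N_1$, which contains $N$ and therefore $J$. Restricting this modified trace to $J$ yields the desired function, completing the argument.

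The bulk of the work sits inside the theorem itself (well-definedness of $\frac{1}{p^k}Tr$ modulo $\mathfrak{m}$ and verification of the cyclicity and partial trace axioms). The corollary merely adds the André--Kahn maximality and the elementary lifting observation above, so there is no further obstacle; the only subtlety worth highlighting is that this argument guarantees existence but not non-triviality of the modified trace on $J$, since objects of higher nullity in $J$ may lie in the kernel of the $k=1$ construction.
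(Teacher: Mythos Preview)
Your proof is correct and follows the same route the paper indicates in one sentence (``Since any thick ideal is contained in the ideal of negligible objects, we obtain\ldots''): you invoke Andr\'e--Kahn to place an arbitrary (proper) thick ideal $J$ inside $N$, lift $N$ to $N_1$ in $\mathcal{C}_R$, and apply the theorem with $k=1$. Your closing caveat is apt: the paper obtains \emph{nontriviality} separately (in the body) by choosing instead the maximal $k$ with $J\subset N_k$ and using $\frac{1}{p^k}Tr$, whereas your $k=1$ argument establishes existence only, which is exactly what this corollary asserts.
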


Since in each case in theorem \ref{theorem-intro-eval} the maximal ideal is generated by a single element, we obtain

\begin{corollary} Each thick ideal of $Tilt(U_q(\mathfrak{g}), \Q(q))$, $Tilt(G,k)$ and the Deligne categories $Rep(S_t)$, $Rep(GL_t)$ and $Rep(O_t)$, $t \in \Co$, admits a nontrivial modified trace function.
\end{corollary}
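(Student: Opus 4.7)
The plan is to reduce the corollary to the preceding theorem by verifying its hypotheses in each of the three families and then checking nontriviality separately. First, for each listed category $\mathcal{C}$, I would invoke Theorem~\ref{theorem-intro-eval} to present it as the mod $\mathfrak{m}$ evaluation of a rigid spherical monoidal category $\mathcal{C}_R$ with free Hom spaces over a local ring $R$. I would then observe that in every case $R$ is a (complete) discrete valuation ring, so its maximal ideal is principal: generated by $v-q$ for $Tilt(U_q(\mathfrak{g}),\Q(q))$, by $p$ for $Tilt(G,k)$ (since $R=W(k)$), and by $t-n$ for $Rep(S_t)$, $Rep(GL_t)$, $Rep(O_t)$. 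Writing $\pi$ for a chosen uniformizer, the preceding theorem is then applicable to $\mathcal{C}_R$.

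Second, given a thick ideal $I$ of $\mathcal{C}$, I would take $\tilde I$ to be its preimage in $\mathcal{C}_R$, which is itself a thick ideal because retracts and tensor products are preserved by the tensor functor $\mathcal{C}_R\to\mathcal{C}$. Since every thick ideal of $\mathcal{C}$ is contained in the ideal $N=N_1$ of negligible objects, each indecomposable $X\in\tilde I$ has nullity at least one, and I would set
\[ k\ =\ \min\bigl\{\operatorname{nullity}(X)\ :\ X\in\tilde I\text{ indecomposable}\bigr\}\ \geq\ 1. \]
With this choice, every object of $\tilde I$ is simultaneously $k$-negligible with respect to $(\pi)$, so the preceding theorem produces modified trace and dimension functions $\Trk_X(a)=\pi^{-k}Tr_X(a)$ and $\dimk(X)=\pi^{-k}\dim(X)$ on $\tilde I$, whose images under $\mathcal{C}_R\to\mathcal{C}$ restrict to a modified trace on $I$.

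The remaining task is nontriviality. By the definition of $k$ the minimum is attained by some indecomposable $X_0\in\tilde I$ of nullity exactly $k$, so there is an $a\in\End_{\mathcal{C}_R}(X_0)$ with $Tr_{X_0}(a)\in\mathfrak{m}^k\setminus\mathfrak{m}^{k+1}$. Then $\pi^{-k}Tr_{X_0}(a)$ is a unit of $R$ whose image in $R/\mathfrak{m}$ is a nonzero scalar, so the induced modified trace on $I$ is nonzero on the image of $a$. The only conceptual point that needs attention is the existence of a single exponent $k$ controlling every object of $\tilde I$ at once; this is precisely what the minimum above supplies, and once it is fixed every remaining step is formal.
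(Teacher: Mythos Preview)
Your proposal is correct and follows essentially the same route as the paper. The paper's proof is a one-line reduction: each category is a mod $\mathfrak{m}$ evaluation over a discrete valuation ring (Theorem~\ref{thm:tilting-eval} and \cite{Heidersdorf-Wenzl-Deligne}), and then the preceding corollary applies. You have simply unpacked that preceding corollary by exhibiting the exponent $k$ as the minimal nullity on the preimage ideal and verifying nontriviality via an object realizing that minimum; this is exactly the mechanism the paper sketches in the paragraph before the corollary (``these modified traces are nontrivial on $N_k\setminus N_{k+1}$ since there exists $a\in\End_{\C_R}(X)$ with $Tr_X(a)\in p^k\setminus p^{k+1}$''). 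Two cosmetic points: your claim that every thick ideal lies in $N_1$ holds only for \emph{proper} thick ideals (the improper case $I=\C$ is handled by the ordinary trace, i.e.\ $k=0$), and for the Deligne categories the uniformizer is $T-t$ in $\Co[T]_{(T-t)}$ rather than ``$t-n$''.
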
 


The most interesting example for this is the case of $Tilt(U_q(\mathfrak{g}),\Co)$. The classical way of Reshetikhin-Turaev to define link invariants colored by objects of $Tilt(U_q(\mathfrak{g}),\Co)$ yields a trivial invariant $\L$ unless the objects are all in the fundamental alcove. Due to our lifting theorem, we can directly define a link invariant in the sense of Reshetikhin-Turaev over the local ring $R$, the completion of $\Co[v]_{(v-q)}$. This invariant can be normalized by $\frac{1}{p^k}$ like the dimension function and yields an $R/(p)$ valued invariant for $Tilt(U_q(\mathfrak{g}),\Co)$.

\begin{theorem} (see theorem \ref{thm:mod-link}) Assume that the components of the link have been colored with the objects $X_1,\ldots, X_m \in \mathcal{C}_R$. Let $k$ be the nullity of $X_1^{\otimes c_1}\otimes X_2^{\otimes c_2}\otimes\ ...
\otimes X_m^{\otimes c_m}$. Then the value of the $R/(p)$-valued invariant
is equal to $\frac{1}{k!}\frac{d^k}{dv^k}\L^{(X_1,\ ..., X_m)}(L)_{|v=q}$,
which is valid for its evaluation on any $m$-component link $L$.
\end{theorem}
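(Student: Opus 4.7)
The plan is to realize $\L^{(X_1,\ldots,X_m)}(L)$ as a categorical trace on the object $Y := X_1^{\otimes c_1}\otimes\cdots\otimes X_m^{\otimes c_m}$ of $\C_R$, use the hypothesis that $Y$ has nullity $k$ to extract a factor of $p^k = (v-q)^k$, and then compare the resulting residue class in $R/(p)$ with the $k$-th Taylor coefficient at $v=q$.

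First, I would present $L$ as the closure of a colored braid $\beta$ via Alexander's theorem, chosen so that the $i$-th component contributes exactly $c_i$ strands of color $X_i$. Under the Reshetikhin--Turaev functor attached to the spherical ribbon category $\C_R$, this braid is sent to an endomorphism $\rho(\beta) \in \End_{\C_R}(Y)$, and the braid-closure operation corresponds to the categorical trace, giving
\[
\L^{(X_1,\ldots,X_m)}(L)\ =\ Tr_Y\bigl(\rho(\beta)\bigr)\ \in\ R.
\]
By hypothesis $Y$ has nullity $k$, so by the very definition of $k$-negligibility every endomorphism of $Y$ has trace in $\mathfrak{m}^k = (p^k)$. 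In particular
\[
\L^{(X_1,\ldots,X_m)}(L)\ =\ p^k\,h(v)\qquad\text{for some }h(v)\in R.
\]

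Applying the previous theorem's normalization $\Trk_Y = p^{-k}Tr_Y$ and reducing modulo $(p)$, the $R/(p)$-valued invariant is by construction equal to the image of $h(v)$ in $R/(p)$, i.e.\ to $h(q)$. On the other hand, since $R$ is the completion of $\Co[v]_{(v-q)}$ with $p=v-q$, writing $\L^{(X_1,\ldots,X_m)}(L)=(v-q)^k h(v)$ and applying Leibniz shows that every term in which $(v-q)^k$ is differentiated fewer than $k$ times vanishes at $v=q$, so
\[
\frac{d^k}{dv^k}\L^{(X_1,\ldots,X_m)}(L)\Big|_{v=q}\ =\ k!\,h(q).
\]
Dividing by $k!$ produces $h(q)$ and matches the two sides of the claim.

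The only nontrivial input is the braid-closure presentation used in the first step, but this is entirely standard; sphericality of $\C_R$ and the closure of $N_k$ under duals and tensor products guarantee that the object $Y$ appearing at any horizontal cut of a diagram for $L$ lies in $N_k$, so the divisibility step does not depend on the choice of presentation. Thus the argument reduces to the trivial algebraic identity between a residue class in $R/(p)$ and the normalized $k$-th Taylor coefficient at $v=q$.
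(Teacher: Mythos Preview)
Your argument is correct and essentially identical to the paper's own proof: the paper also writes $\L^{(X_1,\ldots,X_m)}(L)=(v-q)^k\,\L^{(X_1,\ldots,X_m),(k)}(L)$ and then computes the $k$-th derivative at $v=q$ (the paper phrases the differentiation step as ``by induction on $k$'' rather than invoking Leibniz, but this is the same computation). The one point the paper makes explicit that you leave implicit is that the construction of $\L$ already lives over the subring $\Co[v]_{(v-q)}$, so the derivative at $v=q$ is literally a derivative of a rational function rather than a formal operation on the completion; you might add a sentence to that effect.
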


\subsection{Thick ideals for tilting modules}

What does the nullity capture in the tilting module case? 

\begin{itemize}
\item In the modular case, the maximal ideal $\mathfrak{m}$ of the complete discrete valuation ring $W(k)$ is generated by $p$. If a tilting module over $W(k)$ is in $N_k$, then in particular $rank(T(\lambda)) \in (p)^k$. There are however tilting modules over $W(k)$ satisfying $p^k | rank_{W(k)} T(\lambda)$ while $T(\lambda) \notin N_k$. However, if $T(\lambda)$ is irreducible, then $T(\lambda) \in N_k$ if and only if $p^k | rank_{W(k)} T(\lambda)$.
\item In the quantum case the maximal ideal is generated by $(v-q)$. The dimension is an element in the subring $\mathbb{Q}[v]_{(v-q)}$; and the dimension over $\Q(q)$ is obtained by evaluating this rational function at $v=q$. As for the modular case, $T(\lambda) \in N_k$ implies that the  multiplicity of $(v-q)^k$ in the numerator is at least $k$; and conversely if $T(\lambda)$ is irreducible.
\end{itemize}

In the modular and quantum case the thick ideals are sums of thick ideals attached to a right $p$-cell or a right cell in the affine Weyl group $W^+_{p}$ (or $W^+_{\ell}$) (see section \ref{sec:classif}) by results of \cite{Achar-Hardesty-Riche} \cite{Ostrik-ideals}. The combinatorics of these cells is however very difficult and not fully understood, especially in the modular case \cite{Jensen-Phd}. We construct thick ideals $\mathcal{I}(F)$ associated to minimal facets $F$ and compute their nullities in Proposition \ref{tensorexample}. This suggests a description of tensor ideals as a collection of positive cones associated to certain facets.

For the quantum type $A_{n-1}$ every thick ideal is a sum of thick ideals attached to Young diagrams $\lambda$ of size $n$ (which parametrize the two-sided cells of the affine Weyl group). We attach a {\it standard facet} $F_0(\lambda)$ to every such Young diagram and prove:

\begin{theorem} (see theorem \ref{TypeAmain} for details) 
The thick ideal $\I(\la)=\I(F_0(\la))$
generated by  the tilting modules $T(\nu)$ for which
$\nu+\rho\in F_0(\la)$ coincides with the thick ideal constructed by
Ostrik for the cell in the dominant Weyl chamber corresponding to the two-sided
cell labeled by the Young diagram $\la^T$.  In particular, the nullity of any
generating module $T(\nu)$ of that ideal is equal to the value of Lusztig's $a$-function of that cell.
\end{theorem}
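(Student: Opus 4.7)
The plan is to reduce the theorem to two combinatorial statements about the affine Weyl group $\tilde W$ of type $A_{n-1}$ and then invoke Proposition \ref{tensorexample} for the nullity. I would first recall that by Ostrik's classification \cite{Ostrik-ideals}, the nontrivial thick ideals of $Tilt(U_q(\sl_n),\Q(q))$ are in bijection with two-sided cells of $\tilde W^+$, which in type $A$ are parameterized by partitions of $n$; the ideal corresponding to a two-sided cell $\mathbf{c}$ is the thick ideal generated by the tilting modules $T(\nu)$ whose shifted weight $\nu+\rho$ lies in $\mathbf{c}$ (after projecting to the dominant chamber). The goal is therefore to identify the generators of $\I(F_0(\la))$ with those of Ostrik's ideal for $\mathbf{c}_{\la^T}$.

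The first main step is to show that every weight $\nu$ with $\nu+\rho\in F_0(\la)$ lies in the two-sided cell labelled by $\la^T$. Since $F_0(\la)$ is constructed in the preceding section from the column-length data of $\la$, a point of $F_0(\la)$ has reflection stabilizer in $\tilde W$ conjugate to a Young subgroup of shape determined by $\la$. Standard descriptions of two-sided cells in affine type $A$ (via Shi's sign type description or equivalently the affine Robinson--Schensted correspondence) associate to such a stabilizer precisely the cell of shape $\la^T$, where the transposition reflects the passage from the parabolic description to the Kazhdan--Lusztig cell description.

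The second step is the reverse inclusion. Here I would argue by a translation-functor/tensor-product argument: every $\nu$ whose shifted weight lies in Ostrik's cell for $\la^T$ can be obtained from some $\nu_0$ with $\nu_0+\rho\in F_0(\la)$ by a sequence of wall-crossings, so that $T(\nu)$ appears as a direct summand of $T(\nu_0)\otimes T(\omega)$ for a suitable small tilting module $T(\omega)$. Closure of thick ideals under tensoring and retracts then gives $T(\nu)\in\I(F_0(\la))$, yielding the opposite containment.

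For the nullity statement, Proposition \ref{tensorexample} expresses the nullity of $\I(F_0(\la))$ in terms of the codimension of $F_0(\la)$ and the local alcove geometry; by construction this count equals $n(\la^T)=\sum_i\binom{\la^T_i}{2}$, which is the classical value of Lusztig's $a$-function on the two-sided cell $\mathbf{c}_{\la^T}$ in affine type $A$. The hard part will be the first step: matching the explicit, geometric definition of $F_0(\la)$ with the intrinsically combinatorial Robinson--Schensted description of Ostrik's cells, and in particular tracking why the transposition $\la\mapsto\la^T$ appears. I expect this to be most cleanly verified by comparing both sides with the asymptotic Hecke algebra ($J$-ring) module structure, which is known to encode both the tensor ideal generated by $T(\nu)$ and the two-sided cell of $\nu+\rho$.
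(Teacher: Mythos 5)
Your overall outline (Shi's combinatorics for affine type $A$, a tensor/cone argument for generation, and a dimension count for the nullity) is the same as the paper's, but the two steps you rely on are not actually justified, and the first one as stated rests on a false principle. You argue that a point of $F_0(\la)$ has reflection stabilizer conjugate to a Young subgroup of type determined by $\la$, and that Shi's description ``associates to such a stabilizer'' the cell $\la^T$. The two-sided cell of a weight is \emph{not} a function of the conjugacy class of its stabilizer. Concretely, take affine $A_3$ with $\ell=5$ and $y=(12,7,5,0)$: this dominant point lies exactly on the hyperplanes $(y,\ep_1-\ep_2)=\ell$ and $(y,\ep_3-\ep_4)=\ell$, so its stabilizer has type $A_1\times A_1$, the same type as for points of the standard facet $F_0([2,2])=(\ell+x_1,\ell,x_1,0)$; yet $\{12,7,0\}$ is a $y$-chain of length $3$, so Shi's invariant is $\mu(y)=(3,1)$, whereas points of $F_0([2,2])$ have $\mu=(2,2)$ -- the two facets lie in different two-sided cells despite conjugate stabilizers. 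What makes $F_0(\la)$ work is not its stabilizer type but its congruence structure (all entries of a column of $\la$ are congruent mod $\ell$, giving $\ell$-strict chains of lengths $\la_j^T$), together with an argument that no larger disjoint chain systems exist; this is exactly Proposition \ref{identifyideals}, for which your proposal offers no substitute.

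Your second step likewise only restates what must be proved: that every $T(\nu)$ with $\nu+\rho$ in the relevant cell is a retract of $T(\nu_0)\otimes T(\omega)$ with $\nu_0+\rho\in F_0(\la)$. The positive cone over $F_0(\la)$ alone does not cover the cell; the paper needs the whole family of facets $F\sim F_0(\la)$ indexed by standard tableaux and Shi's second algorithm (Proposition \ref{Dyalgorithm}), which, given a cell weight $y$, produces a tableau $t_y$ and hence a facet whose positive cone contains $y$ (Theorem \ref{Ilambda}); only then does the tensor argument of Proposition \ref{tensorexample} apply. ``A sequence of wall-crossings'' is not an argument for this, and translation functors by themselves do not tell you which weights are reachable. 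Finally, Proposition \ref{tensorexample} gives only a \emph{lower} bound on the nullity; to get the exact value you must know that the generators with $\nu+\rho\in F_0(\la)$ are simple Weyl modules (minimality of $F_0(\la)$, again Proposition \ref{identifyideals} together with Lemma \ref{minfacet}), so that Corollary \ref{modulenullity} computes the nullity from the quantum dimension formula; and the relevant count is the number $k(F_0(\la))=\sum_j\binom{\la_j^T}{2}$ of hyperplanes containing the facet, not its codimension (these differ already for the Steinberg point $(2\ell,\ell,0)$ in $\sl_3$, which has nullity $3$ but codimension $2$).
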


For the relation between $N_k$ and the values of the $a$-function in all types see remark \ref{a-fun}. For type $A$
we can also give the already alluded geometric description of tensor ideals via positive cones associated to certain
facets, see Theorem \ref{Ilambda} for details. This follows fairly easily from  earlier work of \cite{Shi}, 
where we have benefitted from its description in 
\cite{Cooper}. In particular, we obtain an explicit description of the $N_k$. Moreover, this approach also
suggests a description of
the ideal structure for the modular case which is done in section \ref{sec:modular-regions}.

\subsection{Structure of the article}

In section \ref{sec:k-negl} we introduce basic properties of the generalized negligible ideals. Modified trace functions are studied in section \ref{sec:traces} and modified link invariants in section \ref{sec:link}. Sections \ref{tilting-quantum} - \ref{sec:tilting-eval} deal with the case of tilting modules. In section \ref{sec:typeA} we give a description of the thick ideals in quantum type $A$. We end the article with some open questions in section \ref{sec:questions}. A second article \cite{Heidersdorf-Wenzl-Deligne} will treat the case of Deligne categories. A third article will deal with open questions about the thick ideals and the $N_k$ for modular and quantum tilting modules.



\section{$k$-negligible morphisms and their tensor ideals} \label{sec:k-negl}

\subsection{Preliminaries} \label{sec:prel} In the following let $R$ be a local ring
with maximal ideal $\mathfrak{m}$. We assume $\C_R$ (or sometimes simply $\C$) to be a monoidal
rigid spherical category whose $\Hom$ spaces are free $R$-modules and such that $End(\one) = R$ (see e.g. \cite[Section 4.7]{EGNO} for details). For simplicity we additionally assume that $\C_R$ is braided. Otherwise we would have to distinguish between left thick ideals and right thick ideals and between partial and modified traces for the left and right versions. However, the followings notions make sense without the added \emph{braided} if one is willing to either work with left or right versions of these. 

Recall that under these assumptions there exist, for each object $X$ in $\C_R$, canonical morphisms
$$i_X: \1\to X\otimes X^*,\quad \tilde d_X: X\otimes X^* \to \1$$
via which we can define the {\it trace} $Tr_X$ on $\End(X)$ by
$$Tr_X(a)=\tilde d_X (a\otimes id_{X^*})i_X,\quad {\rm for\ all}\ a\in \End(X);$$
here $i_X$ and $d_{X^*}: X^{**}\otimes X^*\to\1$ are the morphisms 
in the definition of rigidity for the objects $X$ and $X^*$, and
$\tilde d_X= d_{X^*}\circ (s_X\otimes 1)$. The isomorphisms $s_X:X\to X^{**}$ from the spherical structure
are normalized such that $\dim(X)=\dim(X^*)$ for all objects $X$ in $\C$,
where $\dim(X)=Tr(id_X)$.
For elements $a\in \End(X\otimes Y)$, we can also define the {\it partial trace}
or {\it conditional expectation} $E_X:\End(X\otimes Y)\to \End(X)$ by
$$E_X(a)= (id_X\otimes \tilde d_Y)(a\otimes id_{Y^*})(id_X\otimes i_Y).$$
The name {\it partial trace} is justified by the equation
$$Tr_X(E_X(a))=Tr_{X\otimes Y}(a),\quad a\in \End(X\otimes Y).$$

\medskip\noindent
\subsection{Tensor ideals} Let $\mathcal{C}$ be a monoidal category. A {\it tensor ideal} $\I$ in $\mathcal{C}$ consists of an $R$-submodule $\I(X,Y) \subset Hom(X,Y)$ for all $X,Y \in \C$ such that 
\begin{itemize} 
\item for all $X,Y,Z,W \in \C$ and $g \in Hom(X,Y)$ and $h \in Hom(Z,W)$ \[ f \in \I(Y,Z) \text{ implies } f \circ g \in \I(X,Z) \text{ and } h \circ f \in \I(Y,W); \] 
\item $f \in \I(X,Y)$ implies $id_Z \otimes f \in \I(Z \otimes X,Z \otimes Y)$ and likewise from the right.
\end{itemize}

A collection of objects $I$ in a monoidal category $\C$ is called a \emph{thick ideal} of $\C$ if the following conditions are satisfied:
\begin{enumerate}
\item[(i)] $X\otimes Y\in I$ whenever $X\in\C$ and $Y\in I$.
\item[(ii)] If $X\in\C$, $Y\in I$ and there exist $\alpha:X\to Y$, $\beta:Y\to X$ such that $\beta\circ\alpha=\id_X$, then $X\in I$.
\end{enumerate} 

To any tensor ideal $\I$ we can associate the thick ideal $I$ given by \[ I = \{ X \in \C \ | \ id_X \in \I(X,X) \}.\] 

One of the major reasons to study the tensor ideals and thick ideals in $\C$ is due to the fact that the morphisms that are sent to zero under a monoidal functor $\C \to \C'$ to another monoidal category $\mathcal{C}'$ form a tensor ideal; and the objects of $\C$ that are sent to zero form a thick ideal.

\subsection{Generalized negligible morphisms} Let $R$ be a local ring and $\C_R$ as in section \ref{sec:prel}.

\begin{definition} a) Let $I \subset R$ be an ideal. We call a morphism $f: X\to Y$ $I$-negligible
if $Tr_X(g\circ f)\in I$ and $Tr_Y(f\circ g)\in I$ for all morphisms
$g: Y\to X$. An object $X$ is called $I$-negligible if $Tr_X(a)\in I$
for all $a\in \End(X)$.

b) If $f$ is $I$-negligible with respect to $I = \mathfrak{m}^k$, we simply say that $f$ is $k$-negligible. An object is $k$-negligible if it is $I$-negligible for $I = \mathfrak{m}^k$.
\end{definition}

\begin{lemma} The $I$-negligible morphisms form a tensor ideal $\N_I$ in the category $\C_R$. The $I$-negligible objects form a thick ideal $N_I$ in $\C_R$.
\end{lemma}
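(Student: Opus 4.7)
The plan is to verify each of the tensor-ideal axioms (subgroup, closure under composition on either side, closure under tensor product) for $\mathcal{N}_I$, and then deduce the thick-ideal properties of $N_I$ either by a direct argument or by observing that $N_I = \{X : \mathrm{id}_X \in \mathcal{N}_I(X,X)\}$.

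First I would check that $\mathcal{N}_I(X,Y)$ is an $R$-submodule of $\Hom(X,Y)$. Since $I$ is an ideal and $Tr$ is $R$-linear in its argument, if $f_1,f_2\in\mathcal{N}_I(X,Y)$ and $r\in R$, then $Tr_X(g\circ(r f_1+f_2))=r\,Tr_X(g\circ f_1)+Tr_X(g\circ f_2)\in I$ for any $g:Y\to X$, and likewise for $Tr_Y$.

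Next I would show closure under composition. If $f\in\mathcal{N}_I(X,Y)$, $h:Y\to Z$ and $g:Z\to X$, then $Tr_X(g\circ h\circ f)=Tr_X((g\circ h)\circ f)\in I$ because $g\circ h:Y\to X$, while the cyclicity of the trace gives $Tr_Z((h\circ f)\circ g)=Tr_Y(f\circ g\circ h)\in I$ by the same argument applied from the other side. Closure under pre-composition is symmetric.

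The technically most interesting step is closure under tensor products. Given $f\in\mathcal{N}_I(X,Y)$ and an arbitrary object $Z$, I must show $\mathrm{id}_Z\otimes f\in\mathcal{N}_I(Z\otimes X,Z\otimes Y)$. For $g:Z\otimes Y\to Z\otimes X$, define its partial trace $E_Z(g):Y\to X$ by the obvious adaptation of the formula in Section~\ref{sec:prel} (using $i_Z$ and $\tilde d_Z$). A standard sphericality argument, exactly the one that proves $Tr_X(E_X(a))=Tr_{X\otimes Y}(a)$ for endomorphisms, yields
\[
Tr_{Z\otimes X}\bigl(g\circ(\mathrm{id}_Z\otimes f)\bigr)\;=\;Tr_X\bigl(E_Z(g)\circ f\bigr)\;\in\;I,
\]
since $E_Z(g):Y\to X$ and $f$ is $I$-negligible; symmetrically for $Tr_{Z\otimes Y}$. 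Tensoring on the right is handled analogously. The only step I expect to require genuine care is verifying this partial-trace identity in the spherical setting — but it is a routine diagram chase using $i$, $\tilde d$ and the sphericality isomorphism, already implicit in the preliminaries.

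Finally, for the thick ideal $N_I$, I would observe that $X\in N_I$ iff $\mathrm{id}_X\in\mathcal{N}_I(X,X)$: indeed, the latter unpacks to $Tr_X(g)=Tr_X(g\circ\mathrm{id}_X)\in I$ for all $g\in\End(X)$. The thick-ideal axioms then follow from the tensor-ideal axioms for $\mathcal{N}_I$: closure of $N_I$ under $X\otimes(-)$ follows from $\mathrm{id}_{X\otimes Y}=\mathrm{id}_X\otimes\mathrm{id}_Y\in\mathcal{N}_I$, and closure under retracts follows because if $\beta\circ\alpha=\mathrm{id}_X$ with $Y\in N_I$, then $\mathrm{id}_X=\beta\circ\mathrm{id}_Y\circ\alpha\in\mathcal{N}_I(X,X)$. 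Alternatively, one can argue directly: for $a\in\End(X)$, $Tr_X(a)=Tr_X(\beta\circ\alpha\circ a)=Tr_Y(\alpha\circ a\circ\beta)\in I$; and for $a\in\End(X\otimes Y)$ with $X\in N_I$, $Tr_{X\otimes Y}(a)=Tr_X(E_X(a))\in I$.
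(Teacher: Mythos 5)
Your proposal is correct and takes essentially the same route as the paper. The subgroup and composition-closure steps are the same cyclicity-of-trace observations the paper calls ``easy,'' the tensor-closure step is the same sphericality/partial-trace argument (the paper proves the slightly more general closure under $f\otimes g$ for arbitrary $g$, citing Barrett--Westbury for the reduction to a morphism composed with $f$, while you spell out the minimal required case $\mathrm{id}_Z\otimes f$ via the identity $Tr_{Z\otimes X}(g\circ(\mathrm{id}_Z\otimes f))=Tr_X(E_Z(g)\circ f)$), and the thick-ideal verification via $E_X$ matches the paper exactly.
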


\begin{proof} It is easy to see that $\mathcal{N}_I$ is an ideal using $Tr(f\circ g) = Tr(g \circ f)$ for composable morphisms $f:X \to Y$, $g:Y \to X$ \cite[Theorem XIV.4.2]{Kassel}. Now let $f \in \mathcal{N}_I(X,Y)$ and $g \in Hom(W,Z)$ arbitrary for $X,Y,W,Z$ in $\mathcal{C}_R$. Let $h \in Hom(Y \otimes Z,X \otimes W)$. Then \[ Tr((f \otimes g) \circ h) = Tr(h \circ (f \otimes g)) = Tr(h' \circ f)\] for some $h': Y  \to X$ (as in \cite[Theorem 2.9]{Barrett-Westbury}). Since $f$ is $I$-negligible, this implies $Tr((f \otimes g) \circ h) \in I$.

Let now $X$ be an $I$-negligible object, and $Y$ any object in $\C$.
If $a\in \End(X\otimes Y)$, we have
$$Tr_{X\otimes Y}(a)=Tr_X(E_X(a))\in I,$$
as $E_X(a)\in\End(X)$ and $X$ is $I$-negligible. Hence $X\otimes Y$ is an object in $N_I$ as well.
\end{proof}

\begin{remark} The definition is similar to the the one of the Jantzen filtration on morphisms defined by the form $Tr(f \circ g)$.
\end{remark}

\subsection{The mod $\mathfrak{m}$ evaluation} We are primarily interested in categories
whose $\Hom$ spaces are vector spaces over a field. Recall that if $M$ is a
free $R$-module of rank $r$, we obtain a well-defined vector space
$M/\mathfrak{m}M$ over $\Bbbk=R/\mathfrak{m}$ of dimension $r$.  We call the {\it mod $\mathfrak{m}$
evaluation} (or reduction modulo $\mathfrak{m}$) of $\C_R$ the category $\C$ over $\Bbbk$ whose objects are in 1-1
correspondence with the ones of $\C_R$, and where \[ Hom_{\C}(X,Y)=Hom_{\C_R}(X,Y)/\mathfrak{m}Hom_{\C_R}(X,Y) \cong Hom_{\C_R}(X,Y) \otimes_R R/\mathfrak{m}.\]
In the following, the notations $\Hom$, $\End$ etc will refer to the evaluation
category $\C$. The corresponding spaces for $\C_R$ will be denoted by $\Hom_R$,
$\End_R$ etc. We call $\C_R$ a {\it lift} of $\C$.

\subsection{Examples} We give some examples of the lifting of a monoidal category $\mathcal{C }$ over $\Bbbk$ to a monoidal category over a local ring.

\subsubsection{Fusion categories} 

Let $\Bbbk$ be any field, $R$ a local ring with $R/\mathfrak{m} \cong k$. If $\mathcal{C}$ is a split fusion category over $\Bbbk$, a lifting of $\mathcal{C}$ in the sense of \cite[9.16]{EGNO} is a split fusion category $\tilde{\mathcal{C}}$ over $R$ such that $\mathcal{C}$ is the mod $\mathfrak{m}$ evaluation of $\tilde{\mathcal{C}}$.

\begin{theorem} \cite[Theorem 9.16.1]{EGNO} If the global dimension of $\mathcal{C}$ is non-zero, $\mathcal{C}$ admits a lifting to $R$ and this lifting is unique up to equivalence.
\end{theorem}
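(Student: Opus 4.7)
The plan is to prove this via formal deformation theory, following the strategy underlying Ocneanu rigidity. Pick representatives of the isomorphism classes of simple objects $X_1, \ldots, X_n$ in $\mathcal{C}$ and bases of the Hom spaces between tensor products of simples. I would then define a candidate $R$-linear category $\tilde{\mathcal{C}}$ with the same set of simple objects and Hom spaces that are free $R$-modules on the chosen bases. The fusion rules (structure constants in $\mathbb{Z}_{\geq 0}$) lift tautologically, so the real content is to produce composition maps, an associativity constraint and a unit constraint over $R$ that reduce modulo $\mathfrak{m}$ to those of $\mathcal{C}$ and satisfy the pentagon and triangle axioms.

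The construction proceeds by induction on $n$, building compatible lifts $\tilde{\mathcal{C}}_n$ over $R/\mathfrak{m}^n$. Given $\tilde{\mathcal{C}}_n$, the obstruction to extending the associator to $R/\mathfrak{m}^{n+1}$ is a $4$-cocycle in the Davydov--Yetter complex of $\mathcal{C}$ with coefficients in $\mathfrak{m}^n/\mathfrak{m}^{n+1}$, and the ambiguity in choosing such an extension is controlled by $H^3_{\mathrm{DY}}(\mathcal{C}) \otimes \mathfrak{m}^n/\mathfrak{m}^{n+1}$. By Ocneanu rigidity, which applies precisely because the global dimension of $\mathcal{C}$ is nonzero, one has $H^i_{\mathrm{DY}}(\mathcal{C}) = 0$ for all $i > 0$; this simultaneously kills the obstruction (giving existence of the next-order lift) and forces any two lifts to be isomorphic (giving uniqueness at each stage). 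Assembling the tower yields a formal lift over the $\mathfrak{m}$-adic completion $\hat{R}$; in case $R$ is already complete this is the desired $\tilde{\mathcal{C}}$, and in general one obtains a lift over $R$ by descent from $\hat{R}$ together with the fact that only finitely many structure constants need to be lifted, so an Artin-type approximation argument applies.

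For uniqueness of the lift, suppose $\tilde{\mathcal{C}}$ and $\tilde{\mathcal{C}}'$ are two lifts. By induction over $n$, one constructs a tensor equivalence between their reductions $\tilde{\mathcal{C}}_n$ and $\tilde{\mathcal{C}}_n'$: the obstruction to extending an equivalence from $R/\mathfrak{m}^n$ to $R/\mathfrak{m}^{n+1}$ lies in $H^3_{\mathrm{DY}}(\mathcal{C}) \otimes \mathfrak{m}^n/\mathfrak{m}^{n+1}$ and the ambiguity lies in $H^2_{\mathrm{DY}}(\mathcal{C}) \otimes \mathfrak{m}^n/\mathfrak{m}^{n+1}$, both of which vanish by Ocneanu rigidity. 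Passing to the limit gives the desired equivalence.

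The main obstacle is the input from Ocneanu rigidity itself, i.e.\ the vanishing of the Davydov--Yetter cohomology of a fusion category whose global dimension does not vanish. This is the deep ingredient; the deformation-theoretic skeleton above is formal once one has the cohomology vanishing in hand. A secondary technical point is to verify carefully that the obstruction classes and the classes controlling the uniqueness are indeed Davydov--Yetter cocycles, which requires a direct analysis of the pentagon equation and of tensor natural isomorphisms between associators.
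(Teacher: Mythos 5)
Note first that the paper does not prove this statement: it is quoted from \cite[Theorem 9.16.1]{EGNO}, so there is no internal proof to compare against. Your outline is essentially the argument of Etingof--Nikshych--Ostrik underlying the cited theorem (see also \cite{ENO-fusion}): pass to a skeletal description in which only the associativity data (the F-symbols subject to the pentagon equation) needs to be lifted, lift order by order over $R/\mathfrak{m}^{n}$, identify the obstruction in $H^{4}_{\mathrm{DY}}$ and the indeterminacy in $H^{3}_{\mathrm{DY}}$ (respectively $H^{3}$ and $H^{2}$ for lifting a tensor equivalence), and kill everything by Ocneanu rigidity, which is exactly where the hypothesis of nonzero global dimension enters. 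Your degree bookkeeping is correct, and treating the vanishing of Davydov--Yetter cohomology as the deep external input is honest; but be aware that this vanishing (proved via separability of the canonical algebra, which is equivalent to the nonvanishing of the global dimension) \emph{is} the substance of the cited theorem, so a self-contained proof would have to include it rather than quote it.

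The one genuine gap is the final step, where you pass from the formal lift over the completion back to $R$ by ``descent from $\hat R$ together with \dots an Artin-type approximation argument.'' The inductive construction only ever produces compatible data over the rings $R/\mathfrak m^{n}$, i.e.\ a lift over $\varprojlim R/\mathfrak m^{n}=\hat R$; getting from there to $R$ amounts to solving the (finitely many) pentagon equations in $R$ with prescribed reduction modulo $\mathfrak m$, and this can genuinely fail over a local ring that is not Henselian, while Artin approximation requires excellence/Henselian hypotheses, only produces solutions agreeing with the formal one modulo a fixed power of $\mathfrak m$, and gives no control over uniqueness up to equivalence over $R$ itself. The statement should be read with $R$ a \emph{complete} local ring, as in \cite{EGNO} and in every application in this paper ($W(k)$, the completions of $\Q[v]_{(v-q)}$ and of $\Co[t]_{(t-n)}$); with that hypothesis your tower argument closes and the approximation step should simply be deleted.
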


Of particular interest is the situation where $\Bbbk$ is a perfect field of prime characteristic $p$. Then the ring of Witt vectors $W(k)$ is a discrete valuation ring with maximal ideal generated by $p$ and $W(k)/pW(k) \cong k$. If $\mathcal{C}$ is a non-degenerate (symmetric/braided) fusion category category over $\Bbbk$, then it admits a (symmetric/braided) lifting to $W(k)$ by \cite[Theorem 9.3, Corollary 9.4]{ENO-fusion}. Since we are interested here in the construction of tensor ideals, the semisimple case is not relevant to us.

\subsubsection{Algebraic groups} \label{sec:alg}
While we can define the mod $\mathfrak{m}$ evaluation for any monoidal category over the local ring $R$, it is often not the correct category one is interested in. Consider the case of an algebraic group $G$ over the local ring $R$. Then extension of scalars of $Rep(G,R)$ defines a monoidal functor \begin{align*} Rep(G,R) & \to Rep(G \otimes R/\mathfrak{m}, R/\mathfrak{m}) \\ V & \mapsto V \otimes_R R/\mathfrak{m} \\ Hom_G(X,Y) & \mapsto Hom_G(X,Y) \otimes R/\mathfrak{m}.\end{align*} The image of $Rep(G,R)$ under this functor is the mod $\mathfrak{m}$ evaluation, but it is not the category $Rep(G \otimes R/\mathfrak{m}, R/\mathfrak{m})$ (unless we are in the semisimple case). Indeed the canonical functor \[ Hom_G (M,N) \otimes R/\mathfrak{m} \to Hom_{G_{R/\mathfrak{m}}}(M \otimes R/\mathfrak{m}, N \otimes R/\mathfrak{m}),  \] where $G_{R/\mathfrak{m}}$ is the algebraic group over $\Bbbk$ obtained by extension of scalars, is in general not bijective \cite[10.14]{Jantzen}.

\subsubsection{Deligne categories} For every field $\Bbbk$ Deligne \cite{Deligne-interpolation} defined symmetric monoidal categories $Rep(S_t)$, $Rep(GL_t)$ and $Rep(O_t)$, $t \in k$, which interpolate the representation categories of the symmetric group, the general linear group and the orthogonal group. Each of this categories is constructed in the following way: One defines a skeletal subcategory corresponding to the tensor powers of the permutation representation $V$ of $S_n$, the standard representation $V$ of $O(n)$ or the tensor product $V \otimes V^{\vee}$ of the standard representation $V$ of $GL(n)$ and its dual. The object corresponding to such a tensor power $V^{\otimes r}$ is denoted $r$ in the $S_n$ and $O(n)$-case and $(r,s)$ in the $GL(n)$-case. The endomorphism algebras of these objects are by definition

\begin{enumerate}
\item $End_{Rep(S_t)}(r) = kP_r(t)$, the partition algebra for the parameter $t$.
\item $End_{Rep(O_t)}(r) = kBr_r(t)$, the Brauer algebra for the parameter $t$.
\item $End_{Rep(GL_t)}(r,s) = kWB_{r,s}(t)$, the walled Brauer algebra for the parameter $t$.
\end{enumerate}

To get the full category, we take the additive karoubian envelope of the skeletal subcategory. The categories $Rep(S_t)$, $Rep(GL_t)$ and $Rep(O_t)$ admit a lift to the completion of the local ring of evaluable rational functions $R = k[T]_{(T-t)}$ \cite{Heidersdorf-Wenzl-Deligne}. Indeed the construction described above makes sense over $R$ as well. This can be generalized to inlcude the $q$-deformations of $Rep(O_t)$ and $Rep(GL_t)$.

\subsubsection{Tilting modules} Let $Tilt$ denote the monoidal category of modular/quantized tilting modules. Then $Tilt$ admits a lift to the category of tilting modules over the ring of Witt vectors $W(k)$ (where $\Bbbk$ is a perfect field of characteristic $p > 0$) respectively a lift to the category of tilting modules over the completion of $\Q[v]_{v-q}$. For details we refer the reader to sections \ref{tilting-modular}, \ref{tilting-quantum} and \ref{sec:tilting-eval}.


\subsection{New tensor ideals}

\begin{lemma} The tensor ideals $\mathcal{N_I}$ of $\mathcal{C}_R$ define tensor ideals in the mod $\mathfrak{m}$ evaluation $\mathcal{C}$. The thick ideals $N_I$ define thick ideals in the mod $\mathfrak{m}$ evaluation. The tensor ideal $\mathcal{N}_1$ corresponds to the ideal of negligible morphisms in $\mathcal{C}$ and the thick ideal $N_1$ corresponds to the indecomposable objects of categorial dimension 0.
\end{lemma}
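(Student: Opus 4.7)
The plan is in two stages: first verify formally that the images of $\mathcal{N}_I$ and $N_I$ under the reduction functor $F:\mathcal{C}_R\to\mathcal{C}$ are tensor and thick ideals respectively, and then, for $I=\mathfrak{m}$, identify $F(\mathcal{N}_1)$ with the classical ideal $\mathcal{N}$ of negligible morphisms in $\mathcal{C}$ and $F(N_1)$ with the thick ideal of indecomposables of vanishing categorical dimension.

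The first stage is essentially formal. The functor $F$ is full, bijective on objects by definition of the mod~$\mathfrak{m}$ evaluation, and $k$-linear monoidal. For a tensor ideal $\mathcal{J}$ of $\mathcal{C}_R$, its image
\[
F(\mathcal{J})(X,Y) \ :=\ \bigl(\mathcal{J}(X,Y)+\mathfrak{m}\Hom_R(X,Y)\bigr)/\mathfrak{m}\Hom_R(X,Y)
\]
is a subgroup of $\Hom_{\mathcal{C}}(X,Y)$. To check closure under left and right composition and under tensor product with an arbitrary morphism $\bar h$ of $\mathcal{C}$, I would lift $\bar h$ to some $h$ in $\mathcal{C}_R$ by fullness, apply the ideal property of $\mathcal{J}$ in $\mathcal{C}_R$, and reduce mod~$\mathfrak{m}$; the same reasoning applied to $N_I$ handles tensoring with arbitrary objects and retracts.

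For the identification $F(\mathcal{N}_1)=\mathcal{N}$, the key ingredient is that the rigidity/pivotal data $i_X,\tilde d_X$ descend under $F$, so that $Tr_{\mathcal{C}}(\bar a)=\overline{Tr_{\mathcal{C}_R}(a)}$ in $k=R/\mathfrak{m}$ for every lift $a$ of $\bar a$. Combined with fullness, the condition $f\in\mathcal{N}_1(X,Y)$, namely $Tr(g\circ f)\in\mathfrak{m}$ for all $g\in\Hom_R(Y,X)$, is equivalent to $Tr_{\mathcal{C}}(\bar g\circ\bar f)=0$ for all $\bar g\in\Hom_{\mathcal{C}}(Y,X)$, which is exactly negligibility of $\bar f$ in $\mathcal{C}$. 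The same reasoning gives $F(N_1)\subseteq N$ directly: if $X\in N_1$ then $\dim\bar X=\overline{Tr(\id_X)}=0$, so the indecomposable summands of $\bar X$ have vanishing dimension.

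The reverse containment $N\subseteq F(N_1)$ is the one substantive step. For an indecomposable $\bar X$ of $\mathcal{C}$ with $\dim\bar X=0$ and any $\bar a\in\End_{\mathcal{C}}(\bar X)$, Krull--Schmidt yields a decomposition $\bar a=\lambda\,\id_{\bar X}+n$ with $n$ in the (nilpotent) radical of the local algebra $\End_{\mathcal{C}}(\bar X)$; I would then invoke the standard vanishing of the pivotal trace on nilpotent endomorphisms of an indecomposable object (see \cite{Andre-Kahn}) to conclude $Tr_{\mathcal{C}}(\bar a)=0$, which lifts to $Tr_{\mathcal{C}_R}(a)\in\mathfrak{m}$ for any lift $a$, showing $X\in N_1$. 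This last step is the only non-formal ingredient and is where I expect the main conceptual content to lie; everything else is bookkeeping around fullness of $F$ and compatibility of the trace with reduction.
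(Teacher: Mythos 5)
Your proposal is correct and follows essentially the route the paper has in mind: the paper states this lemma without proof, regarding it as immediate from the principle (made explicit in its later subsection ``A generalization'') that a full, surjective monoidal functor carries tensor ideals to tensor ideals and thick ideals to thick ideals, combined with the Andr\'e--Kahn characterization of $\mathcal{N}$ and $N$ recalled in the introduction; your two stages, including the compatibility $Tr_{\mathcal{C}}(\bar a)=\overline{Tr_{\mathcal{C}_R}(a)}$, supply exactly those details. Two small points should be tightened. First, the inference ``$X\in N_1$ gives $\dim\bar X=0$, so the indecomposable summands of $\bar X$ have vanishing dimension'' is a non sequitur as written (the total dimension vanishing says nothing about summands); the correct and, in your setup, immediate argument is that $X\in N_1$ forces $Tr_{\mathcal{C}_R}(a)\in\mathfrak{m}$ for \emph{every} $a\in\End_R(X)$, in particular for a lift of the idempotent cutting out any indecomposable summand of $\bar X$, so each summand has dimension $0$ --- equivalently, all traces on $\End_{\mathcal{C}}(\bar X)$ vanish and $\bar X\cong 0$ in $\mathcal{C}/\mathcal{N}$. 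Second, closure of the image of $N_I$ under retracts is not purely formal: a splitting $\bar\beta\bar\alpha=\id_{\bar X}$ only lifts to $\beta\alpha=\id_X+m$ with $m\in\mathfrak{m}\End_R(X)$, and one must invert this, which works because $\End_R(X)$ is module-finite over the local ring $R$, so $\mathfrak{m}\End_R(X)$ lies in the radical; with that remark your ``same reasoning'' clause for thick ideals goes through. The appeal to vanishing of traces of nilpotent endomorphisms of indecomposables is exactly the input the paper itself takes from Andr\'e--Kahn, so that reliance is consistent with the paper's level of rigor.
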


In particular we obtain a chain of tensor ideals \[  \ldots \subseteq \mathcal{N}_3 \subseteq \mathcal{N}_2 \subseteq \mathcal{N}\] and likewise a chain of thick ideals \[  \ldots \subseteq N_3 \subseteq N_2 \subseteq N.\] 

The number $k$ can often be seen as a measure for the vanishing of $\dim(X)$. If $X \in N_k$ with $k$ minimal, we say that $X$ has {\it nullity} $k$. For the explicit meaning of this nullity we refer to the  examples that appear later in the article.

\begin{question} The following question was raised by Kevin Coulembier and Victor Ostrik: Can one find a local ring $R$ such that the $N_J$, where $J$ runs over the ideals of $R$, is a complete list of thick ideals in $\mathcal{C}$? While we do not know the answer, the existence of a lifting to a local ring seems delicate in the non-semisimple case.
\end{question} 


\subsection{Compatibilities and $k$-semisimplicity}

The following lemma follows immediately from the definition.

\begin{lemma} Let $\mathcal{C}$ be the mod-$\mathfrak{m}$ evaluation of $\mathcal{C}_R$ over the local ring $R$. The diagram \[ \xymatrix{ \mathcal{C}_R \ar[r] \ar[d] & \mathcal{C} \ar[d] \\ \mathcal{C}_R/\mathcal{N}_k \ar[r] & \mathcal{C}/\mathcal{N}_k } \] commutes. In particular $\mathcal{C}/\mathcal{N}_k$ is the mod $\mathfrak{m}$ evaluation of $\mathcal{C}_R/\mathcal{N}_k$. 
\end{lemma}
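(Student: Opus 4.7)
The plan is to verify the claim in two parts: commutativity of the square at the level of objects and morphisms, and the identification of $\C/\N_k$ with the mod-$\mathfrak{m}$ evaluation of $\C_R/\N_k$. Both assertions reduce to writing out definitions, so the strategy is purely a diagram chase. The only point that deserves care is unfolding the convention from the introduction: the tensor ideal called $\N_k$ in $\C$ is, by definition, the image of $\N_k\subset\C_R$ under the mod-$\mathfrak{m}$ evaluation. Once this is noted, both statements collapse to elementary quotient manipulations.

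For commutativity, observe that all four functors in the square are the identity on objects, so it suffices to check commutativity on $\Hom$-spaces. Given $f\in\Hom_{\C_R}(X,Y)$, going right-then-down sends $f$ to $(\bar f)+\N_k(X,Y)$ in $\Hom_{\C/\N_k}(X,Y)$, where $\bar f$ denotes the class of $f$ in $\Hom_\C(X,Y)=\Hom_{\C_R}(X,Y)/\mathfrak{m}\Hom_{\C_R}(X,Y)$. Going down-then-right sends $f$ first to $f+\N_k(X,Y)\in\Hom_{\C_R/\N_k}(X,Y)$; the induced functor $\C_R/\N_k\to\C/\N_k$ is well-defined precisely because the mod-$\mathfrak{m}$ evaluation maps $\N_k(X,Y)\subset\Hom_{\C_R}(X,Y)$ into $\N_k(X,Y)\subset\Hom_\C(X,Y)$ (by definition of the latter), and it then carries $f+\N_k(X,Y)$ to $\bar f+\N_k(X,Y)$. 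Both paths produce the same element.

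For the second statement, compute both Hom-spaces explicitly as quotients of $\Hom_{\C_R}(X,Y)$. On the one hand,
\begin{equation*}
\Hom_{\C_R/\N_k}(X,Y)\big/\mathfrak{m}\Hom_{\C_R/\N_k}(X,Y)\ \cong\ \Hom_{\C_R}(X,Y)\big/\bigl(\N_k(X,Y)+\mathfrak{m}\Hom_{\C_R}(X,Y)\bigr),
\end{equation*}
since the $R$-action factors through the quotient. On the other hand, using that the ideal $\N_k$ in $\C$ is the image of $\N_k\subset\C_R$ under the reduction,
\begin{equation*}
\Hom_{\C/\N_k}(X,Y)\ \cong\ \Hom_\C(X,Y)\big/\N_k(X,Y)\ \cong\ \Hom_{\C_R}(X,Y)\big/\bigl(\N_k(X,Y)+\mathfrak{m}\Hom_{\C_R}(X,Y)\bigr).
\end{equation*}
The two quotients coincide, which gives the identification of $\C/\N_k$ as the mod-$\mathfrak{m}$ evaluation of $\C_R/\N_k$. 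There is no real obstacle here beyond keeping the two uses of the symbol $\N_k$ (in $\C_R$ versus in $\C$) straight; the whole lemma is a bookkeeping exercise justifying the assertion in the paper that it follows immediately from the definitions.
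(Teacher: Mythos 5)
Your proof is correct and matches the paper's intent: the paper offers no argument beyond asserting that the lemma "follows immediately from the definition," and your diagram chase plus the identification of both Hom-spaces with $\Hom_{\C_R}(X,Y)/\bigl(\N_k(X,Y)+\mathfrak{m}\Hom_{\C_R}(X,Y)\bigr)$ is exactly the routine unfolding being alluded to. The only cosmetic remark is that the Hom-spaces of $\C_R/\N_k$ need no longer be free $R$-modules, so "mod $\mathfrak{m}$ evaluation" is used there in the obvious extended sense; your computation does not depend on freeness, so nothing breaks.
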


Since $\mathcal{N}_k \subset \mathcal{N}_{k+1}$ we obtain a chain of full and surjective tensor functors  \[ \xymatrix{ \ldots \ar[r] \ar[d] & \mathcal{C}_R/\mathcal{N}_3 \ar[r] \ar[d] & \mathcal{C}_R/\mathcal{N}_2 \ar[r] \ar[d] & \mathcal{C}/\mathcal{N} \ar[d] \\ \ldots \ar[r] & \mathcal{C}/\mathcal{N}_3 \ar[r] & \mathcal{C}/\mathcal{N}_2 \ar[r] & \mathcal{C}/\mathcal{N}.  } \] If $\C$ is a tensor category so that $\mathcal{C}/\mathcal{N}$ is semisimple, this loosely suggests to interpret $\mathcal{C}/\mathcal{N}_k$ as \emph{$k$-semisimple}. This can be made more precise by considering the trace. Recall \cite[Proposition 5.7]{Deligne-interpolation} that a tensor category is semisimple if and only if the trace pairing $Hom(X,Y) \times Hom(Y,X) \to k$ is non-degenerate, i.e. $Tr(fg) = 0$ for all $g:Y \to X$ implies $f=0$. For a morphism $f$ in $\mathcal{C}/\mathcal{N}_k$ the deviation for the failure of the non-degeneracy condition can be seen as an element in $I^{\leq k-1}$ by considering the lift of $f$ in $\mathcal{C}_R/\mathcal{N}_k$.




\subsection{A generalization} \label{sec:general} If a monoidal functor is full and surjective (on objects), the images of the tensor ideals and thick ideals $\mathcal{N}_k$ are again tensor ideals and thick ideals. Therefore we can more generally define $k$-negligible morphisms and objects provided we have a full and surjective monoidal functor $\mathcal{C}_R \to \mathcal{C}$.


\subsection{Monoidal supercategories}

The notion of an $I$-negligible ideal and the mod $\mathfrak{m}$ evaluation can be defined in the same way for monoidal supercategories as in \cite{Brundan-Ellis} \cite[A.1.2]{Coulembier}.  Examples of monoidal supercategories are the odd Temperley-Lieb supercategory $\mathcal{STL}(\delta), \delta \in k$ of \cite[Example 1.17]{Brundan-Ellis}, the affine VW supercategory $\sVW$ \cite{many-1} and the oriented Brauer-Clifford and degenerate affine oriented Brauer-Clifford supercategories \cite{Brundan-Comes-Kujawa}. An ideal $J$ in a supercategory is an ideal as in an ordinary category with the extra assumption that $J(X,Y)$ is a graded subgroup of $Hom(X,Y)$ for all $X,Y \in \mathcal{C}$. The notion of a tensor ideal is otherwise unchanged. Thick ideals can be defined as for monoidal categories.



\section{Modified traces and dimensions} \label{sec:traces}

We recall the definition of a modified trace function. The existence of such trace functions on thick ideals is in general difficult and often only known on the thick ideal of projective objects. We show that these exist for our rigid spherical category provided it admits a lift to a local ring whose maximal ideal is principal.

\subsection{The concept of a modified trace}


Let $\C_R$ be rigid spherical monoidal over a local ring $R$. As in Section \ref{sec:prel} we assume that $\C_R$ is braided in order to identify left and right duals. We follow \cite{Geer-Kujawa-Patureau-Mirand} \cite{Geer-Kujawa-Patureau-Mirand-ambidextrous} \cite{Geer-Patureau-Mirand-Virelizer} \cite{Geer-Patureau-Mirand-projective} in the definition of a trace function. Recall that for any objects $X,Y \in \C$ and any endomorphism $f \in End_\mathcal{C}(X \otimes Y)$ we have the left  trace $t_L(f) \in End_{\C}(X)$ and the right trace $t_R(f) \in End_{\C}(Y)$ defined as follows \begin{align*} tr_L(f) & = (d_X \otimes id_Y) \circ (id_{X^*} \otimes f) \circ (\tilde{i}_X \otimes id_Y) \in End_{\mathcal{C}} (X) \\ tr_R(f) & = (id_X \otimes \tilde{d}_Y) \circ (f \otimes id_{Y^*}) \circ (id_X \otimes i_{Y}) \in End_{\mathcal{C}}(Y).\end{align*}

\begin{definition}\label{D:trace}  If $I$ is a thick ideal in $\mathcal{C}$ then a \emph{trace on $I$} is a family of linear functions
$$\{\mt_V:\End_\mathcal{C}(V)\rightarrow R\}$$
where $V$ runs over all objects of $I$ and such that the following two conditions hold.
\begin{enumerate}
\item  If $X\in I$ and $Y\in \mathcal{C}$ then for any $f\in End_\mathcal{C}(X\otimes Y)$ we have
\[ 
\mt_{X\otimes Y}\left(f \right)=\mt_X \left( \tr_R(f)\right).\]
\item  If $X,Y\in I$ then for any morphisms $f:X\rightarrow Y $ and $g:Y\rightarrow X$  in $\mathcal{C}$ we have 
\[ 
\mt_X(g\circ f)=\mt_Y(f \circ g).\]

\end{enumerate}
\end{definition}

Given such a trace on a thick ideal $I$, $\{t_{V} \}_{V \in I}$, define the {\it modified dimension} function on objects of $I$ as the modified trace of the identity morphism:
\begin{equation*}
d\left(V \right) =t_{V}(id_{V}).
\end{equation*}

\subsection{Existence of modified traces} 

As was shown in \cite{Geer-Kujawa-Patureau-Mirand-ambidextrous}, modified trace functions exist on the thick ideal of projective objects in a number of examples. Beyond that, little seems to be known for general thick ideals.

\medskip\noindent
Let $R$ be a local domain (which is not a field) whose maximal ideal $(p)$ is 
generated by the element $p$. Let $I$ be a thick ideal all of whose objects are $k$-negligible (with respect to $(p)$),
such as e.g. the ideal $N_k$ of all $k$-negligible objects. Then we define
the modified trace $\Trk_X$ and modified dimension
$\dimk(X)$ for an object $X$ in $I$ by
\[
\Trk_X(a)\ =\ \frac{1}{p^k}\ Tr_X(a),\hskip 3em \dimk(X)\ =\ \frac{1}{p^k}\ \dim(X),
\]
where $a\in\End(X)$. Note that this is well-defined since $Tr_X(a) \in (p)^k \ \forall a \in End(X)$. It is clear that $\Trk_X(id_X)\ = \dimk(X)$.

We list some elementary 
properties of these modified traces.

\begin{lemma}\label{modifiedlem} Let $X,Y$ be  objects in $I$,
and let $Z$ be an object in $\C$. Then we have

(a) $\Trk_Y(ab)=\Trk_X(ba)$ for all morphisms $a:X\to Y$ and $b:Y\to X$,

(b) $\Trk_{X\otimes Z}(a\otimes c)=\Trk_X(a)Tr_Z(c)$ and
$\dimk(X\otimes Y)=\dimk(X)\dim(Y)$ for $a\in\End(X)$,
$c\in \End(Z)$.

(c) $\Trk_{X\otimes Y}(f) = \Trk_X(t_R(f))$ for all $f \in End(X\otimes Y)$.
\end{lemma}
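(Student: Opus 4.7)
The plan is to reduce each of (a)--(c) to the corresponding identity for the ordinary trace $Tr$ in the rigid spherical category $\C_R$ and then divide through by $p^k$. The only point that needs care is well-definedness: for every $X \in I$ and every $a \in \End(X)$ one has $Tr_X(a) \in (p)^k$ by $k$-negligibility, so $\frac{1}{p^k}\, Tr_X(a)$ makes sense in $R$ provided $p$ is a non-zero-divisor, which holds in the cases of interest such as $W(k)$ or the completion of $\Q[v]_{(v-q)}$. Once this is established, each part becomes a straightforward rescaling of a standard identity; I do not expect a genuine obstacle anywhere, the content of the lemma being precisely that the classical trace identities survive normalization by $p^k$.

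For (a), cyclicity of the trace in a rigid spherical category gives $Tr_X(ab) = Tr_Y(ba)$. Both sides lie in $(p)^k$ since $X, Y \in I$, and dividing by $p^k$ yields $\Trk_X(ab) = \Trk_Y(ba)$.

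For (b) (reading the intended tensor factor as $X \otimes Z$, consistent with $a \in \End(X)$ and $c \in \End(Z)$), I would invoke the standard multiplicativity $Tr_{X \otimes Z}(a \otimes c) = Tr_X(a)\, Tr_Z(c)$ valid in any rigid spherical monoidal category; this follows from the compatibility of the evaluation and coevaluation morphisms with tensor products, combined with the spherical normalization of $s_X$ recalled in Section \ref{sec:prel}. Since $X \in I$ the factor $Tr_X(a)$ already lies in $(p)^k$, so dividing by $p^k$ there produces $\Trk_{X \otimes Z}(a \otimes c) = \Trk_X(a)\, Tr_Z(c)$; specializing $a = id_X$ and $c = id_Z$ recovers the product formula $\dimk(X \otimes Z) = \dimk(X)\dim(Z)$.

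For (c), the key input is the partial-trace identity $Tr_{X \otimes Y}(f) = Tr_X(E_X(f))$ recalled in Section \ref{sec:prel}, where $E_X(f)$ coincides with the right trace $t_R(f) \in \End(X)$ of Section \ref{sec:traces}. Since $I$ is thick and $X \in I$, also $X \otimes Y \in I$, so both $Tr_{X \otimes Y}(f)$ and $Tr_X(t_R(f))$ lie in $(p)^k$; dividing both sides by $p^k$ yields $\Trk_{X \otimes Y}(f) = \Trk_X(t_R(f))$, completing the proof.
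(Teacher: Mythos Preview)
Your proposal is correct and takes essentially the same approach as the paper, which simply states that the identities ``follow immediately from the usual trace properties.'' You have spelled out precisely which trace identities are being invoked and also flagged the well-definedness issue (that $p$ should be a non-zero-divisor for $\frac{1}{p^k}Tr_X(a)$ to be unambiguous), a point the paper leaves implicit.
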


\begin{proof} These properties hold for the ordinary trace. By our assumptions
 the renormalizations $Tr^{(k)}$ are also well-defined for the elements to which they are applied in our statements. As the renormalization factor is the same on both sides of the equation, the statements are also true for $Tr^{(k)}$.
\end{proof}

Taking the images of these modified traces defines modified trace functions $\Trk_X$ on $N_k \subset \C$. These modified traces are nontrivial on $N_k \setminus N_{k+1} \subset \C$ since there exists $a \in End_{\C_R}(X)$ satisfying $Tr_X(a) \in (p^k) \setminus (p^{k+1})$ for $X \in N_k \setminus N_{k+1} \subset \C_R$.

\begin{corollary} Suppose that $R$ is not a field and that $\mathfrak{m} = (p)$ is principal. Every thick ideal in a mod $\mathfrak{m}$ evaluation carries a nontrivial modified trace functions. 
\end{corollary}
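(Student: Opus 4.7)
The plan is to apply the preceding theorem with a well-chosen value of $k$. Let $I \subseteq \C$ be a thick ideal, and let $\tilde I$ be its preimage in $\C_R$ under the full surjective functor $\C_R \to \C$; this $\tilde I$ is automatically a thick ideal in $\C_R$, since closure under tensor products and retracts passes through the functor in both directions. Because the maximum tensor ideal of $\C$ is $\N$ and its associated thick ideal is $N = N_1$, any thick ideal $I$ is contained in $N_1$. Using fullness of the functor, every endomorphism $a \in \End_{\C_R}(X)$ of an object $X \in \tilde I$ descends to some $[a] \in \End_{\C}([X])$ with $Tr_{[X]}([a]) = 0$ in $R/(p)$, which forces $Tr_X(a) \in (p)$; hence every object of $\tilde I$ is $1$-negligible in $\C_R$.

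Next I would take $k \geq 1$ to be the largest integer with $\tilde I \subseteq N_k$. This maximum is finite in all contexts of the paper: since the $\Hom$ spaces of $\C_R$ are finitely generated over the Noetherian local ring $R$, the Krull intersection theorem gives $\bigcap_j (p)^j = 0$, so no nonzero object can have infinite nullity and the descending chain $\tilde I \cap N_k$ must eventually fail to be $\tilde I$. By maximality there exists $X_0 \in \tilde I$ with $X_0 \in N_k \setminus N_{k+1}$, while every object of $\tilde I$ still satisfies the $k$-negligibility hypothesis of the previous theorem.

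With these choices the preceding theorem applied to $(\tilde I, k)$ directly produces the modified trace $\Trk_X(a) = \frac{1}{p^k}\, Tr_X(a)$ on $\tilde I$, whose image under $\C_R \to \C$ is a modified trace on $I$. Nontriviality is furnished by the remark immediately before the corollary: for $X_0 \in N_k \setminus N_{k+1}$ there exists $a \in \End_{\C_R}(X_0)$ with $Tr_{X_0}(a) \in (p)^k \setminus (p)^{k+1}$, so $\Trk_{X_0}(a) = p^{-k} Tr_{X_0}(a)$ is a unit of $R$ and reduces to a nonzero element of $R/(p)$.

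The one genuine obstacle is to justify the existence of a finite maximal $k$ with $\tilde I \subseteq N_k$; in full generality this requires that the values of $Tr$ on $\tilde I$ lie in a $(p)$-adically separated submodule of $R$, which in the examples of tilting modules, Witt vector reductions, and Deligne lifts considered here follows from Noetherianity together with the Krull intersection theorem.
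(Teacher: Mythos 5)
Your argument is correct and is essentially the paper's own (implicit) one: every proper thick ideal of $\mathcal{C}$ lies in the negligible objects, one passes to the lifted ideal in $\mathcal{C}_R$, renormalizes the trace by $p^{-k}$ for the largest $k$ with that ideal contained in $N_k$, and nontriviality comes from an object in $N_k\setminus N_{k+1}$ via the remark preceding the corollary. Two small imprecisions worth noting: descending traces along $\mathcal{C}_R\to\mathcal{C}$ uses compatibility of the functor with the spherical/trace structure rather than fullness; and the Krull intersection theorem by itself only shows that an object of infinite nullity has identically vanishing trace form over $R$, so the existence of a finite maximal $k$ additionally requires that this cannot happen inside the lifted ideal -- which does hold in all of the paper's examples, where nonzero objects have nonzero rank or quantum dimension over $R$, but is a point the paper itself also leaves implicit.
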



\begin{corollary} Every thick ideal in the following categories admits a nontrivial modified trace.
\begin{enumerate}
\item The Deligne categories $Rep(S_t)$, $Rep(GL_t)$, $Rep(O_t)$, $t \in \Co$.
\item Let $Tilt(U_q(\mathfrak{g}), \Q(q))$ denote the category of tilting modules in the category of finite dimensional modules of Lusztig's quantum group where $\mathfrak{g}$ is a semisimple Lie algebra and $q$ a primitive $\ell$-th root of unity where $\ell > h$ and $\ell$ is not divisible by 3 if $\mathfrak{g}$ contains $\mathfrak{g}_2$.
\item Let $Tilt(G,k)$ denote the category of tilting modules in the category of finite dimensional representations of $G$, where $G$ is a semisimple and simply connected algebraic group over a perfect field $\Bbbk$ of characteristic $p >0$. 
\end{enumerate}
\end{corollary}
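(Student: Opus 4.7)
The plan is to reduce the statement directly to the preceding corollary, which asserts that whenever $\C$ is the mod $\mathfrak{m}$ evaluation of some $\C_R$ over a local ring $R \neq k$ with principal maximal ideal $\mathfrak{m}=(p)$, every thick ideal in $\C$ carries a nontrivial modified trace. So the task is merely to verify, for each of the three families of categories listed, that a lift to such an $R$ exists. This is exactly the content of Theorem~\ref{theorem-intro-eval}, which we invoke.

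More explicitly: for the quantum tilting case $Tilt(U_q(\mathfrak{g}), \Q(q))$, the theorem provides a lift to $Tilt(U_v(\mathfrak{g}), R)$ where $R$ is the completion of $\Q[v]_{(v-q)}$; the maximal ideal is $(v-q)$, principal and nonzero, so $R$ is not a field. For the modular tilting case $Tilt(G,k)$, the lift is to $Tilt(G,W(k))$ over the ring of Witt vectors $W(k)$, whose maximal ideal is $(p)$ with $p=\mathrm{char}(k)$, again principal and nonzero. For the Deligne categories $Rep(S_t)$, $Rep(GL_t)$, $Rep(O_t)$ with $t \in \Co$, the theorem provides lifts over the completion of $\Co[T]_{(T-n)}$, whose maximal ideal is generated by the single element $T-n$. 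In each case the hypotheses of the preceding corollary are satisfied.

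Applying the previous corollary to each of these situations produces nontrivial modified trace functions on every thick ideal in the respective mod $\mathfrak{m}$ evaluation. Concretely, if $I$ is a thick ideal in $\C$ and $X\in I$ has nullity $k$ (with respect to the single generator of $\mathfrak{m}$), then a preimage $\tilde X$ of $X$ in $\C_R$ satisfies $Tr_{\tilde X}(a) \in \mathfrak{m}^k$ for all $a\in \End_R(\tilde X)$, and dividing by $p^k$ yields an $R$-valued function whose image modulo $\mathfrak{m}$ is the desired modified trace on $X$; nontriviality follows because, by minimality of $k$, there exists some $a$ with $Tr_{\tilde X}(a) \notin \mathfrak{m}^{k+1}$, so $\Trk_X(a)$ is a nonzero element of $R/\mathfrak{m}$. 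The trace axioms of Definition~\ref{D:trace} are inherited from those of $Tr$ on $\C_R$ via Lemma~\ref{modifiedlem}.

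There is no real obstacle at this stage of the paper: all of the work lies upstream in proving Theorem~\ref{theorem-intro-eval}, i.e.\ constructing the lifts with the required freeness of $\Hom$-spaces and the rigid spherical monoidal structure over the respective local rings. Those constructions are carried out in Sections~\ref{tilting-quantum}--\ref{sec:tilting-eval} (and in \cite{Heidersdorf-Wenzl-Deligne} for the Deligne cases); here we simply harvest their consequences.
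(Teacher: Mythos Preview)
Your proposal is correct and follows exactly the same approach as the paper: invoke the preceding corollary after verifying, via Theorem~\ref{theorem-intro-eval} (equivalently Theorem~\ref{thm:tilting-eval} and \cite{Heidersdorf-Wenzl-Deligne}), that each of the three categories is a mod $\mathfrak{m}$ evaluation over a discrete valuation ring with principal maximal ideal. The paper's proof is a one-line version of what you wrote; your additional unpacking of the nontriviality argument and the explicit identification of the generators of $\mathfrak{m}$ is accurate but not strictly needed.
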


\begin{proof} Each category is obtained as a mod $\mathfrak{m}$ reduction of a monoidal category over a discrete valuation ring (see theorem \ref{thm:tilting-eval} and \cite{Heidersdorf-Wenzl-Deligne}).
\end{proof} 

\begin{remark} The results of this section generalize to the situation of section \ref{sec:general} where we have a full and surjective monoidal functor $\mathcal{C}_R \to \mathcal{C}$ for $R$ a local ring with principal maximal ideal. 
\end{remark}


\section{Modified link invariants} \label{sec:link}

We define link invariants for objects in $\C_R$. These can be normalized according to the nullity of the objects and yield nontrivial link invariants for objects in  $\C$ even if their categorial dimension is zero.

Let $\C$ be a ribbon category.
Then  one obtains for each labeling of
components of a link by objects of $\C$ an invariant of that link,
see e.g. Turaev's book \cite{Turaev}.
For our purposes it will be enough to do this via braids as follows:

By Markov's theorem, any link $L$ with $m$ components can be obtained
as the closure of a braid $\beta$ whose image in the canonical quotient map
into $S_n$ would be a permutation with $m$ cycles. Choose objects $X_i$,
$1\leq i\leq m$ in $\C$. We label the strands of $\beta$ which corresponds
to the $i$-th cycle by the object $X_i$. After conjugating by a suitable braid,
if necessary, we can assume that the first $c_1$ strands are labeled by $X_1$,
the next $c_2$ strands by $X_2$ etc, where $c_i$ is the number of strands
labeled by $X_i$. Using the braiding morphisms in $\C$
we obtain a linear map 
$$\Phi(\beta)\in \End(X_1^{\otimes c_1}\otimes X_2^{\otimes c_2}\otimes\ ...
\otimes X_m^{\otimes c_m}).$$
The link invariant $\L^{(X_1,\ ..., X_m)}(L)$ is then defined by
$$\L^{(X_1,\ ..., X_m)}(L)=Tr(\Phi(\beta)).$$
We now assume that our ribbon category $\C$
is defined over a local ring $R$ whose maximal ideal $(p)$ is generated
by an element $p$ in $R$, as in the previous subsection.

Then we can similarly also define the link invariant
$\L^{(X_1,\ ..., X_m), (k)}$ over the category $\C$ as follows: 
Let
$$\X^{\otimes \bf m}=X_1^{\otimes c_1}\otimes X_2^{\otimes c_2}\otimes\ ...
\otimes X_m^{\otimes c_m}$$
Then we have

\begin{theorem} \label{thm:mod-link} (a) If the object 
$\X^{\otimes \bf m}$
 is $k$-negligible, then we obtain a new link invariant
$\L^{(X_1,\ ..., X_m), (k)}$ defined by
$$\L^{(X_1,\ ..., X_m), (k)} (L) 
 =\frac{1}{p^k} \L^{(X_1,\ ..., X_m)}(L)$$
 which is well-defined. In particular, we obtain
a well-defined invariant with values in $R/(p)$.

(b) Let $\C = Tilt(U_v(\mathfrak{g}), R)$ where $R$ is the completion of $\Co[v]_{(v-q)}$ and $p=v-q$. Then
$R/(p)\cong \Co$ and the value of the $R/(p)$-valued invariant
 is equal to $\frac{1}{k!}\frac{d^k}{dv^k}\L^{(X_1,\ ..., X_m)}(L)_{|v=q}$,
which is valid for its evaluation on any $m$-component link $L$.
\end{theorem}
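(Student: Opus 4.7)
The plan is to deduce (a) as an almost immediate consequence of the definition of $k$-negligibility together with the standard Reshetikhin--Turaev braid construction, and then to derive (b) by a Taylor expansion at $v=q$.

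For (a), the key observation is that $\Phi(\beta)\in\End(\X^{\otimes \bf m})$ and, by hypothesis, $\X^{\otimes \bf m}$ is $k$-negligible. The definition of $k$-negligibility therefore gives
\[
\L^{(X_1,\ldots,X_m)}(L)=Tr(\Phi(\beta))\in (p)^k.
\]
Assuming, as in the setting of Section~\ref{sec:traces}, that $p$ is a non-zero-divisor in $R$ (which holds in all motivating examples, since the relevant lifts are DVRs), there is a unique $\mu\in R$ with $Tr(\Phi(\beta))=p^k\mu$, and I set $\L^{(X_1,\ldots,X_m),(k)}(L):=\mu\bmod (p)\in R/(p)$. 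That this is a genuine link invariant, i.e.\ independent of the braid $\beta$ whose closure is $L$, follows because the unrescaled trace $Tr(\Phi(\beta))\in R$ is already independent of $\beta$ by the usual Markov stabilization and conjugation argument in a ribbon category (cyclicity of the trace plus the ribbon twist axioms), an argument which is valid over any commutative ring. Composing with the well-defined rescaling map $(p)^k\to R/(p)$, $p^k\mu\mapsto\mu\bmod(p)$, then yields the invariant $\L^{(X_1,\ldots,X_m),(k)}$.

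For (b), the completion $R$ of $\Co[v]_{(v-q)}$ is canonically identified with the formal power series ring $\Co[[v-q]]$, so $R/(p)=\Co$. Every $f\in R$ has a Taylor expansion $f=\sum_{j\geq 0}a_j(v-q)^j$ with $a_j=\tfrac{1}{j!}\tfrac{d^jf}{dv^j}\big|_{v=q}$, and if $f\in (v-q)^kR$ then $a_0=\ldots=a_{k-1}=0$, whence $(v-q)^{-k}f\equiv a_k\pmod{(v-q)}$. Applying this to $f=\L^{(X_1,\ldots,X_m)}(L)$, which lies in $(v-q)^kR$ by part (a), yields the displayed formula $\tfrac{1}{k!}\tfrac{d^k}{dv^k}\L^{(X_1,\ldots,X_m)}(L)\big|_{v=q}$.

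The one subtlety worth flagging is the hypothesis that $p$ be a non-zero-divisor, which is what makes the quotient $\mu$ well-defined in $R$ and hence in $R/(p)$. This is automatic in the examples of interest---the Witt vectors $W(k)$ and the completions of localized polynomial rings, which are all discrete valuation rings---but would fail for an arbitrary local ring with principal maximal ideal in which $p$ were a zero-divisor. Modulo this point, the theorem is a short formal consequence of the Reshetikhin--Turaev construction and the definition of $k$-negligibility.
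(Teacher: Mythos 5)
Your proposal is correct and follows essentially the same route as the paper: part (a) is the observation that the renormalized quantity is a rescaling of the already braid-independent trace $Tr(\Phi(\beta))\in(p)^k$, and part (b) is the Taylor-coefficient computation in the completion $\Co[[v-q]]$, which is just a repackaging of the paper's induction showing the $k$-th derivative at $v=q$ equals $k!$ times the renormalized value. Your explicit flag that $p$ must be a non-zero-divisor (automatic here since the relevant lifts are discrete valuation rings) is a point the paper leaves implicit but does not change the argument.
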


\begin{proof} The value of $\L^{(X_1,\ ..., X_m), (k)}(L)$ is just a renormalization of the
value of $\L^{(X_1,\ ..., X_m)}(L)$ which does not depend on the particular presentations of
$L$ via a braid $\beta$. Hence it is a link invariant.  Note that all constructions of $\L$ can be performed over the subring $\Co[v]_{(v-q)}$. To prove the last statememt from part (b), just observe that
$\L^{(X_1,\ ..., X_m)}(L)=(v-q)^k\L^{(X_1,\ ..., X_m),(k))}(L)$.
One shows easily by induction on $k$ that the $k$-th derivative of $\L^{(X_1,\ ..., X_m)}(L)$
at $v=q$ is equal to $k!\L^{(X_1,\ ..., X_m),(k))}(L)(q)$.
\end{proof}

\begin{remark}\label{modifiedremarks}
1. Our modified trace depends on the choice of the generator $p$. If we choose
a different generator $p'$, it is of the form $p'=ap$ for an invertible element in $R$.
Then the modified dimensions with respect to $p$ and $p'$ differ by
the same element $a^k$ for all objects in $\I$.

2. Observe that if $X$ is a simple object in $N_k$, then the dimension
of $X^{\otimes 2}$ would be in $I^{2k}$. Nevertheless,
$X^{\otimes 2}$ usually is not in $N_{2k}$. We would expect that
the nullity of the object $\X^{\otimes \bf m}$ would just be
the maximum of the nullities of the objects $X_i$.

3. It would be interesting to define modified traces over local
rings whose maximal ideals need more than one generator.

4. We expect that these constructions can also be applied to the theory of logarithmic Hopf link invariants as in \cite[Section 3.1.3]{Creutzig-Gannon}.
\end{remark}




\section{Tilting modules in the modular case}\label{tilting-modular}

We recall some statements about tilting modules over a field $\Bbbk$ and a complete discrete valuation ring $R$.

\subsection{Weights} Let $\Bbbk$ be a field of characteristic $p > 0$ and $G$ a semisimple, simply connected algebraic group over $\Bbbk$. We denote by $Rep(G)$ the category of finite dimensional representations. We fix a maximal torus $T$ and a Borel subgroup $B$. We denote by $R$ the set of roots and by $R^+$ the set of positive roots. The dominant integral weights are \[ X(T)^+ = \{ \lambda \in X(T) \ | \ <\lambda, \alpha^{\vee}> \geq 0\quad \forall \alpha \in R^+ \}.\]

\subsection{Induced modules and Weyl modules} The induction functor from $B$ to $G$ is not exact. For any $B$-module $M$ we put \[ H^i(M) = R^i Ind_B^G M, \ i \in \mathbb{N} \] and also abbreviate \[ H^i(\lambda) = H^i (k_{\lambda})\] where $\Bbbk_{\lambda}$ stands for $\Bbbk$ regarded as a $B$-module via $\lambda \in X(T)$. All these $H^i(\lambda)$ are finite dimensional over $\Bbbk$ \cite[I.5.12.c]{Jantzen}. The following properties are well-known:

\begin{enumerate}
\item For $\lambda \in X(T)^+$ we have $H^0(\lambda) \neq 0$.
\item If $\lambda \in X(T)^+$, then $soc(H^0(\lambda)) =: L(\lambda)$ is simple, and any finite dimensional simple $G$-module is isomorphic to exactly one $L(\lambda)$.
\item $L(\lambda)^* \cong L(-w_0 \lambda)$ where $w_0$ is the longest element of the Weyl group $W$.
\item $End_G(H^0(\lambda)) \cong \Bbbk \cong End_G L(\lambda)$.
\item $H^i(\lambda) = 0$ for $i > 0$ and $\lambda \in X(T)^+$ (Kempf vanishing).
\end{enumerate}

We define $V(\lambda) := H^0(-w_0 \lambda)^*$. We call $H^0(\lambda)$ the induced module and $V(\lambda)$ the Weyl module. A $G$-module $V$ has a good filtration if there is an ascending chain \[ 0 = V_0 \subset V_1 \subset \ldots \] such that $V = \bigcup_i V_i$ and $V_i/V_{i-1} \cong H^0(\lambda_i)$ for some $\lambda_i \in X(T)^+$. Likewise we say it has a Weyl filtration if $V_i/V_{i-1} \cong V(\lambda_i)$ for some $\lambda_i \in X(T)^+$. Then

\begin{enumerate}
\item $V$ admits a good filtration iff $Ext^1_G(V(\lambda),V) = 0 \ \forall \lambda \in X(T)^+$.
\item $V$ admits a Weyl filtration iff $Ext^1_G(V,H^0(\lambda)) = 0 \ \forall \lambda \in X(T)^+$.
\item $V$ has a Weyl filtration iff $V^*$ has a good filtration.
\item The tensor product of two modules with a good filtration has again a good filtration.
\end{enumerate}

\subsection{Tilting modules over $\Bbbk$}

A finite dimensional $G$-module is called a tilting module if it has a Weyl filtration and a good filtration. We denote by $Tilt(G,k)$ the full subcategory of tilting modules where $\Bbbk$ is a field of characteristic $p > 0$. The direct sum and the tensor product of two tilting modules is tilting. If $V_1,V_2$ are tilting modules, then $Ext^1_G(V_1,V_2) = 0 \ \forall i> 0$. 

\begin{proposition} \cite[Lemma E.6]{Jantzen} \label{prop:tilting-ind} For each $\lambda \in X^+$ there exists a unique indecomposable tilting module $T(\lambda)$ such that the weight space  $T(\lambda)_{\lambda}$ is free of rank 1 over $\Bbbk$ and such that $T(\lambda)_{\mu} \neq 0$ implies $\mu \leq \lambda$. Every tilting module can be written in a unique way as a direct sum of these $T(\lambda)$.
\end{proposition}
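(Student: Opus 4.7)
\medskip

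The plan is to follow the classical Ringel/Donkin strategy, proving existence by iterated extensions, uniqueness via Ext-vanishing, and the direct sum decomposition by establishing the Krull--Schmidt property for tilting modules.

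\textbf{Existence.} I would begin with the Weyl module $V(\lambda)$, which trivially has a Weyl filtration and satisfies $V(\lambda)_\lambda \cong k$ and $V(\lambda)_\mu \neq 0 \Rightarrow \mu \leq \lambda$. The goal is to embed it into a tilting module with the same two properties. Starting from $M_0 = V(\lambda)$, at each step I consider whether there exists some dominant $\mu < \lambda$ with $\mathrm{Ext}^1_G(V(\mu), M_i) \neq 0$; if so, pick a nonsplit extension
\[
0 \to M_i \to M_{i+1} \to V(\mu) \to 0.
\]
Then $M_{i+1}$ inherits a Weyl filtration, and $(M_{i+1})_\lambda$ remains one-dimensional because $\mu < \lambda$. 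Termination uses the finiteness of the set of dominant weights $\mu \leq \lambda$ together with a bound on $\dim_k \mathrm{Ext}^1_G(V(\mu), M_i)$ (which decreases strictly in a suitable dimension-counted sense by the long exact Ext sequence). The resulting module $M$ satisfies $\mathrm{Ext}^1_G(V(\mu), M) = 0$ for every dominant $\mu$, so it has a good filtration by the criterion recalled above; being simultaneously a Weyl-filtered and good-filtered module, $M$ is tilting. Now decompose $M$ into indecomposable summands (this is possible because $M$ is finite-dimensional) and take the summand $T(\lambda)$ containing the image of the canonical highest-weight vector. Then $T(\lambda)_\lambda = k$ and $T(\lambda)_\mu = 0$ unless $\mu \leq \lambda$.

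\textbf{Uniqueness of $T(\lambda)$.} Suppose $T$ is another indecomposable tilting module with $T_\lambda \cong k$ and support contained in weights $\leq \lambda$. Because $\lambda$ is maximal among the weights of $T$, there is a nonzero map $V(\lambda) \to T$ (send a highest weight generator into the one-dimensional $\lambda$-weight space and extend using universality of $V(\lambda)$). Applying $\mathrm{Hom}(-, T(\lambda))$ to the short exact sequence $0 \to V(\lambda) \to T \to T/V(\lambda) \to 0$ and using that $T(\lambda)$ has a good filtration while $T/V(\lambda)$ has a Weyl filtration with highest weights $< \lambda$, the obstruction $\mathrm{Ext}^1_G(T/V(\lambda), T(\lambda))$ vanishes (summand by summand along the Weyl filtration, using the Ext-vanishing between Weyl and good-filtered modules). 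Hence the map $V(\lambda) \to T(\lambda)$ lifts to $T \to T(\lambda)$, and symmetrically one produces a map back. The composition is a scalar on the one-dimensional weight space $\lambda$, hence an automorphism of each factor by the endomorphism-ring argument below, so $T \cong T(\lambda)$.

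\textbf{Krull--Schmidt and unique decomposition.} I would show that $\mathrm{End}_G(T(\lambda))$ is local. Any endomorphism $\varphi$ acts on the one-dimensional weight space $T(\lambda)_\lambda$ by a scalar $c \in k$; if $c \neq 0$, one shows $\varphi$ is an isomorphism (its image contains $V(\lambda)$, and repeating with $\varphi^{\otimes}$-arguments or a Fitting decomposition on the indecomposable $T(\lambda)$ gives surjectivity, hence bijectivity by finite-dimensionality); if $c = 0$, then $\varphi$ is nilpotent by Fitting's lemma applied to the indecomposable $T(\lambda)$. Thus $\mathrm{End}_G(T(\lambda))$ has a unique maximal ideal (the nilpotent endomorphisms), so it is local. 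Since every tilting module is finite-dimensional and can be decomposed into indecomposable summands whose endomorphism rings are local, the Krull--Schmidt theorem applies and yields a unique decomposition. Combined with uniqueness of $T(\lambda)$ above, each indecomposable summand is some $T(\mu)$ and the multiplicities are determined.

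\textbf{Main obstacle.} The delicate step is proving termination of the extension process and the subsequent Ext-vanishing needed both for recognizing $M$ as tilting and for the lifting argument in uniqueness; this is where the interaction between the partial order on $X(T)^+$, the finiteness of weights below $\lambda$, and the Ext-orthogonality between modules with Weyl and good filtrations must be combined carefully.
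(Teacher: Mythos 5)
The paper gives no argument of its own here (it simply cites Jantzen, Lemma E.6), and your proposal is exactly the Ringel--Donkin scheme behind that citation, so the route is the intended one. However, two steps are stated in a way that does not quite work, and both are repaired by the same key lemma that you never make explicit: $\mathrm{Ext}^1_G(V(\nu),V(\mu))\neq 0$ forces $\mu>\nu$ (in particular $\mathrm{Ext}^1_G(V(\mu),V(\mu))=0$). First, termination of the existence step: it is not true that the Ext-dimensions ``decrease strictly in a dimension-counted sense'' if you attach an arbitrary nonsplit extension by $V(\mu)$. The long exact sequence for $0\to M_i\to M_{i+1}\to V(\mu)\to 0$ does lower $\dim\mathrm{Ext}^1_G(V(\mu),-)$ by at least one, but it can create new classes in $\mathrm{Ext}^1_G(V(\nu),M_{i+1})$ for every $\nu<\mu$ through the term $\mathrm{Ext}^1_G(V(\nu),V(\mu))$, so the total Ext-dimension may grow and a naive count does not terminate the process. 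The standard fix is to run through the finitely many dominant weights $\mu\le\lambda$ in decreasing order, at each stage attaching the universal extension $0\to M\to M'\to V(\mu)\otimes\mathrm{Ext}^1_G(V(\mu),M)^*\to 0$: then $\mathrm{Ext}^1_G(V(\mu),M')=0$, and no previously killed group $\mathrm{Ext}^1_G(V(\nu),-)$ with $\nu>\mu$ reappears, precisely because $\mathrm{Ext}^1_G(V(\nu),V(\mu))=0$ for $\nu>\mu$. (A lexicographic induction on the vector of Ext-dimensions, ordered compatibly with $\le$, also works; plain additivity does not.)

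Second, in the uniqueness step you use the sequence $0\to V(\lambda)\to T\to T/V(\lambda)\to 0$ with $T/V(\lambda)$ Weyl-filtered, but a nonzero map $V(\lambda)\to T$ coming from the universal property need not be injective, and even when it is, its cokernel need not carry a Weyl filtration. What is needed is the rearrangement fact, again from $\mathrm{Ext}^1_G(V(\nu),V(\mu))\neq 0\Rightarrow\mu>\nu$: since $\lambda$ is maximal among the weights of $T$, a Weyl filtration of $T$ can be reordered so that $V(\lambda)$ sits at the bottom, giving the exact sequence you use. Finally, for the decomposition statement you should run the same lifting argument for an arbitrary indecomposable tilting module $T$ and an arbitrary maximal weight $\mu$ of $T$, without assuming $\dim T_\mu=1$ in advance: the composite $T(\mu)\to T\to T(\mu)$ restricts to the identity on the embedded $V(\mu)$, hence is not nilpotent, hence is invertible by Fitting, so $T\cong T(\mu)$, and $\dim T_\mu=1$ comes out as a conclusion. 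With these repairs (and Krull--Schmidt, which is automatic for finite-dimensional $G$-modules since endomorphism rings of indecomposables are local), your proof is the standard one behind the cited lemma.
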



\subsection{Tilting modules over discrete valuation rings}

The entire theory of tilting modules can be developed over more general rings. We assume now that $R$ is a complete discrete valuation ring.

For a split reductive group $G$ over $\Z$ we denote by $G_R$ the group over $R$ obtained by extension of scalars to $R$ (but we may omit the subscript if there is no risk of confusion). As for $R = k$, one defines \[ H^i_R(M) = R^i Ind_{B_R}^{G_R} (M), \ H^i_R(\lambda) = H^i_R(R_{\lambda}).\] We denote by $V(\lambda)_{\Z}$ the $\Z$-form of the irreducible $G_{\Q}$-module $V(\lambda)$, $\lambda \in X(T)^+$ as in \cite[II.8.3]{Jantzen} and put \[ V(\lambda)_R = V(\lambda)_{\Z} \otimes_{\Z} R.\] By \cite[II.8.8, II.8.9]{Jantzen} $H^0_R(\lambda) \cong H^0_{\Z}(\lambda) \otimes_{\Z} R$, $H^i(\lambda) = 0$ for all $i>0$ and $V(\lambda)_R \cong H^0_R (-w_0\lambda)^{\vee}$. The analogue of Kempf's vanishing theorem permits to develop the theory of good filtrations and Weyl filtrations also over $R$. 

\begin{lemma} \label{lem:tilting} \cite[Lemma B.9, B.10]{Jantzen} Suppose $R$ is a complete discrete valuation ring. Let $M$ be a $G$-module that is free of finite rank over $R$. Then \ref{GF-1} - \ref{GF-3} are equivalent and \ref{GF-4} - \ref{GF-6} are equivalent.
\begin{enumerate}[label=\textbf{GF1.\arabic*}]
\item \label{GF-1} $M$ has a good filtration.
\item \label{GF-2} $Ext^1_G(V(\lambda)_R,M) = 0 \ \forall \lambda \in X(T)^+$.
\item \label{GF-3} For each maximal $\mathfrak{m}$ in $R$ the $G_{R/\mathfrak{m}}$-module $M \otimes R/\mathfrak{m}$ has a good filtration.
\item \label{GF-4} $M$ has a Weyl filtration.
\item \label{GF-5} $Ext^1_G(M,H^0_R(\lambda)) = 0 \ \forall \lambda \in X(T)^+$.
\item \label{GF-6} For each maximal $\mathfrak{m}$ in $R$ the $G_{R/\mathfrak{m}}$-module $M \otimes R/\mathfrak{m}$ has a Weyl filtration.
\end{enumerate}
\end{lemma}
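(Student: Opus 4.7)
The plan is to prove the equivalences $(\ref{GF-1}) \Leftrightarrow (\ref{GF-2}) \Leftrightarrow (\ref{GF-3})$ for the good-filtration statements, and then deduce $(\ref{GF-4}) \Leftrightarrow (\ref{GF-5}) \Leftrightarrow (\ref{GF-6})$ by duality. Since $V(\lambda)_R^{\vee} \cong H^0_R(-w_0\lambda)$, the functor $M \mapsto M^{\vee}$ interchanges good and Weyl filtrations, preserves freeness of finite rank over $R$, and commutes with reduction modulo any maximal ideal $\mathfrak{m}$; the Weyl-filtration equivalences therefore reduce formally to the good-filtration ones applied to $M^{\vee}$.

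For $(\ref{GF-1}) \Rightarrow (\ref{GF-2})$, I would iterate the long exact sequences of $Ext^\bullet_G(V(\lambda)_R, -)$ along a good filtration of $M$ and reduce to the vanishing $Ext^1_G(V(\lambda)_R, H^0_R(\mu)) = 0$. By Frobenius reciprocity this becomes $H^1(B_R, V(\lambda)_R^{\vee} \otimes R_\mu)$, which vanishes via the $R$-analogue of Kempf vanishing (in force over $R$ by the base-change statements already cited from Jantzen II.8) after filtering $V(\lambda)_R^{\vee}$ by weight vectors. For $(\ref{GF-2}) \Rightarrow (\ref{GF-1})$, I would induct on the $R$-rank of $M$, following the standard Donkin-style construction in the field case: at each stage, a maximal weight $\lambda$ of $M$ together with the $Ext^1$-vanishing hypothesis yields an $H^0_R(\lambda)$-submodule of $M$ whose quotient is $R$-free, of smaller rank, and still satisfies $(\ref{GF-2})$.

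The implication $(\ref{GF-1}) \Rightarrow (\ref{GF-3})$ is the easy direction: since $M$ is $R$-free, tensoring a good filtration with $R/\mathfrak{m}$ preserves strictness, and the subquotients become $H^0_{R/\mathfrak{m}}(\lambda_i)$ by the base change $H^0_R(\lambda) \otimes_R R/\mathfrak{m} \cong H^0_{R/\mathfrak{m}}(\lambda)$. The remaining implication $(\ref{GF-3}) \Rightarrow (\ref{GF-2})$ is the delicate one. The strategy is local-to-global via the PID hypothesis: first argue that $Ext^1_G(V(\lambda)_R, M)$ is finitely generated over $R$ (using that $V(\lambda)_R$ admits a resolution by $G$-modules finitely generated and $R$-free), so that its vanishing can be tested after reduction modulo every maximal ideal. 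Then compare $Ext^1_G(V(\lambda)_R, M) \otimes R/\mathfrak{m}$ with $Ext^1_{G_{R/\mathfrak{m}}}(V(\lambda)_{R/\mathfrak{m}}, M \otimes R/\mathfrak{m})$ via a base-change/universal-coefficient spectral sequence; the flatness of $M$ over $R$, together with $V(\lambda)_R \otimes R/\mathfrak{m} \cong V(\lambda)_{R/\mathfrak{m}}$, should force the relevant comparison map to be injective in degree one, while the right-hand side vanishes by hypothesis.

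The main obstacle is this last step: locating or constructing a sufficiently explicit resolution of $V(\lambda)_R$ by $R$-free $G$-modules, and checking that the $Tor^R_1$-contributions appearing in the universal-coefficient sequence do not obstruct injectivity of the comparison map for $Ext^1$. Once this base-change comparison is in hand, the PID hypothesis on $R$ closes the argument cleanly.
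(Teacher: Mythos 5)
You should first note that the paper contains no proof of this lemma at all: it is quoted verbatim from Jantzen (Lemmas B.9 and B.10) with a citation, so the only meaningful comparison is with the standard argument in that reference. Measured against that, your architecture is the usual one and most of it is sound: the duality $M\mapsto M^{\vee}$ does reduce \ref{GF-4}--\ref{GF-6} to \ref{GF-1}--\ref{GF-3} (using $H^0_R(\lambda)^{\vee}\cong V_R(-w_0\lambda)$ and $(M\otimes R/\mathfrak{m})^*\cong M^{\vee}\otimes R/\mathfrak{m}$ for $M$ finite free); \ref{GF-1}$\Rightarrow$\ref{GF-2} follows from the long exact sequence once one has $\mathrm{Ext}^1_{G_R}(V_R(\lambda),H^0_R(\mu))=0$, which is the Cline--Parshall--Scott vanishing over $R$; \ref{GF-1}$\Rightarrow$\ref{GF-3} is immediate from $R$-freeness and $H^0_R(\lambda)\otimes R/\mathfrak{m}\cong H^0_{R/\mathfrak{m}}(\lambda)$; and the injectivity you hope for in the comparison map $\mathrm{Ext}^1_{G_R}(V_R(\lambda),M)\otimes R/\mathfrak{m}\to \mathrm{Ext}^1_{G_{R/\mathfrak{m}}}(V_{R/\mathfrak{m}}(\lambda),M\otimes R/\mathfrak{m})$ is in fact automatic: the Hochschild complexes computing these groups consist of flat $R$-modules, and $R$ is hereditary, so the universal coefficient sequence applies.

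There are, however, two genuine problems as written. First, the step you yourself flag as the main obstacle does not work as proposed: a resolution of $V(\lambda)_R$ by finitely generated $R$-free $G$-modules does not compute $\mathrm{Ext}_G$, since the category of rational $G_R$-modules has no projectives and such resolutions are not $\mathrm{Hom}_G(-,M)$-acyclic; so the finite generation of $\mathrm{Ext}^1_{G_R}(V_R(\lambda),M)$, which your Nakayama argument needs, is left unsupported (it is true, but requires the nontrivial finiteness of rational cohomology of reductive group schemes over Noetherian rings). The cleaner route --- essentially Jantzen's --- avoids this entirely by proving \ref{GF-3}$\Rightarrow$\ref{GF-1} directly: choose $\lambda$ maximal among the weights of $M$; then the projection $M\to M_\lambda$ is a $B$-module map and induces $\phi\colon M\to H^0_R(\lambda)\otimes_R M_\lambda$; hypothesis \ref{GF-3} shows $\phi\otimes R/\mathfrak{m}$ is surjective for every $\mathfrak{m}$, so Nakayama applied to the finitely generated $R$-module $\mathrm{coker}\,\phi$ gives surjectivity over $R$, the kernel is $R$-free with good reductions, and one inducts on the rank; combined with \ref{GF-1}$\Rightarrow$\ref{GF-2} and \ref{GF-1}$\Rightarrow$\ref{GF-3} this closes the cycle with no finiteness of $\mathrm{Ext}$ needed. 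Second, in your proposed \ref{GF-2}$\Rightarrow$\ref{GF-1} induction the roles of submodule and quotient are reversed: a maximal weight $\lambda$ naturally produces maps out of $V_R(\lambda)$ into $M$, and in a good filtration the layers $H^0(\lambda)$ with $\lambda$ maximal sit at the top, not the bottom (for $SL_2$ one has the non-split sequence $0\to H^0(2p-2-\lambda)\to T(\lambda)\to H^0(\lambda)\to 0$ for $p\le\lambda\le 2p-2$), so the inductive step must split off $H^0_R(\lambda)$'s as a quotient, exactly as in the construction above.
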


Note that if a $G$-module $M$ has a good filtration, then $M$ is free over $R$ (since all $H^0(\mu)$ are so) and for each $R$-algebra $R'$ the $G_{R'}$-module $M \otimes R'$ has a good filtration since $H^0(\mu) \otimes R' \cong H_{R'}^0(\mu)$ \cite[8.8(1)]{Jantzen}. Furthermore $V(\lambda) \otimes R/\mathfrak{m} \cong V(\lambda)_{R/\mathfrak{m}}$ \cite[8.3]{Jantzen}. 

\medskip
In particular we can define tilting modules for $G_R$. Any tilting module for $G_R$ is free of finite rank over $R$. 

\begin{lemma} \cite[Lemma E.19, Proposition E.22]{Jantzen} Suppose that $R$ is a complete discrete valuation ring. For each $\lambda \in X^+$ there exists a unique indecomposable tilting module $T_R(\lambda)$ such that the weight space $T_R(\lambda)_{\lambda}$ is free of rank 1 over $R$ and such that $T_R(\mu)_{(\mu)} \neq 0$ implies $\mu \leq \lambda$. Every tilting module can be written in a unique way as a direct sum of these $T_R(\lambda)$.
\end{lemma}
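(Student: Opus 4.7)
The plan is to adapt the classical argument over a field to the setting of a complete DVR, using Lemma \ref{lem:tilting} as the key compatibility tool between good/Weyl filtrations over $R$ and their reductions.

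For \emph{existence}, I would construct $T_R(\lambda)$ inductively. Start with the Weyl module $M_0 := V_R(\lambda)$, which has a Weyl filtration and highest weight $\lambda$ of multiplicity one. If $M_0$ already admits a good filtration it is tilting and we are done. Otherwise, by \ref{GF-2} applied to the characterization of good filtrations, there is some $\mu < \lambda$ (necessarily, since $V_R(\lambda)$ has $\lambda$ as its unique maximal weight) with $\mathrm{Ext}^1_{G_R}(V_R(\mu), M_0) \neq 0$. Choose a nonzero extension
\[
0 \to M_0 \to M_1 \to V_R(\mu) \to 0.
\]
Then $M_1$ is $R$-free, still has a Weyl filtration, and still has $\lambda$ as a weight of multiplicity one (since $\mu < \lambda$). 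Iterating, one obtains a sequence $M_0 \subset M_1 \subset M_2 \subset \ldots$ of $G_R$-modules with Weyl filtrations whose subquotients involve only weights $\mu \leq \lambda$. Since there are only finitely many dominant weights $\mu \leq \lambda$ and each such $\mu$ can contribute only boundedly often (because the rank of each weight space of a tilting module with highest weight $\lambda$ is bounded, as can be checked by comparing characters with the analogous construction over $\mathrm{Frac}(R)$), the process terminates in some $T_R(\lambda)$ which admits both a good and a Weyl filtration.

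For \emph{uniqueness of the indecomposable summand}, the standard argument works: if $T$ and $T'$ are two tilting modules with highest weight $\lambda$ occurring with multiplicity one and all other weights $\mu < \lambda$, then $\mathrm{Hom}_{G_R}(V_R(\lambda),T)$ and $\mathrm{Hom}_{G_R}(T',H^0_R(\lambda))$ are both free of rank one over $R$, and one produces mutually inverse maps $T \to T'$ and $T' \to T$ modulo summands supported on strictly smaller highest weights. Idempotent lifting (using completeness of $R$) then isolates a unique indecomposable summand containing the $\lambda$-weight space.

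For the \emph{Krull–Schmidt decomposition}, the main point is that for any tilting module $T$ over $R$ the endomorphism ring $\mathrm{End}_{G_R}(T)$ is a finitely generated $R$-algebra (since $T$ is $R$-free of finite rank) and hence is semiperfect because $R$ is a complete local ring: any finite $R$-algebra over a complete local ring is semiperfect, and for such rings the Krull–Schmidt theorem holds. Applied to any tilting module, this gives a decomposition into indecomposables unique up to isomorphism and reordering, and by the highest weight characterization each indecomposable summand is some $T_R(\lambda)$. The main obstacle I anticipate is not any single step but the bookkeeping in the inductive existence argument — showing termination requires a uniform bound on how often each $V_R(\mu)$ can appear, which is cleanest to establish by passing to the generic fiber and using that over the field $\mathrm{Frac}(R)$ representation theory is semisimple so the multiplicities in any candidate tilting module are controlled by Weyl's character formula.
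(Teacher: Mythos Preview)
The paper does not prove this lemma; it merely cites \cite[Lemma E.19, Proposition E.22]{Jantzen} and moves on. So there is no paper-internal argument to compare against, and I can only assess your proposal on its own terms.

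Your overall architecture --- a Ringel--Donkin style inductive construction for existence, a highest-weight argument for uniqueness of the indecomposable, and semiperfectness of finite algebras over a complete local ring for Krull--Schmidt --- is the standard route and matches what Jantzen does. The Krull--Schmidt paragraph is correct as written.

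The termination argument in your existence proof, however, has a real gap. You assert that each $\mu$ can contribute only boundedly often ``because the rank of each weight space of a tilting module with highest weight $\lambda$ is bounded, as can be checked by comparing characters with the analogous construction over $\mathrm{Frac}(R)$''. This is circular: at stage $i$ you do not know that $M_i$ embeds in any tilting module, so no tilting-module bound is available. The comparison with the generic fibre does not rescue this. Over $\mathrm{Frac}(R)$ the category is semisimple, so every short exact sequence $0\to M_i\to M_{i+1}\to V_R(\mu)\to 0$ splits after base change; hence $\mathrm{rk}\,M_i$ simply grows by $\dim V(\mu)$ at each step and is in no way bounded by the character of $V(\lambda)$. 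Equivalently, the relevant $\mathrm{Ext}^1$ groups over $R$ are $R$-torsion but not zero, and adding a single copy of $V_R(\mu)$ need not kill $\mathrm{Ext}^1(V_R(\mu),-)$, nor does it control $\mathrm{Ext}^1(V_R(\mu'),-)$ for other $\mu'$. As written, nothing forces your process to stop.

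The repair is to do what Ringel and Jantzen actually do: enumerate the finitely many dominant weights $\mu\leq\lambda$ in an order compatible with $\leq$, and at each stage take the \emph{universal} extension by the current $V_R(\mu)$, using that $\mathrm{Ext}^1_{G_R}(V_R(\mu),M)$ is a finitely generated $R$-module. One then checks, via $\mathrm{Ext}^1(V_R(\mu'),V_R(\mu))=0$ for $\mu'\not<\mu$, that the vanishing already achieved for larger weights is preserved. This terminates after finitely many steps by construction, with no appeal to an a priori character bound.
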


\begin{remark} The existence of the $T(\lambda)$ with the correct properties works in greater generality (for instance if $R$ is a Dedekind ring which is a principal ideal domain, see \cite[Section E]{Jantzen}). While any tilting module for such $R$ can be decomposed into a direct sum of indecomposable tilting modules, their decomposition will in general not be unique.
\end{remark}

\subsection{Extension of scalars for tilting modules}

By \cite[E.20]{Jantzen} $M$ is an indecomposable $G_R$-module if and only if $End_{G_R}(M)$ is a local ring. We have \[ End_{G_R}(T(\lambda)) = R id_{T_R(\lambda)} + rad \ End_{T_R(\lambda)} (T_R(\lambda)), \]  where \[ rad \ End_{G_R} (T_R(\lambda)) = \{ \varphi \in End_{G_R} (T_R(\lambda)) \ | \ \varphi((T_R(\lambda)_{\lambda}) \subset \mathfrak{m} (T_R(\lambda))_{\lambda}  \}. \]
Furthermore \[ End_{G_{R/\mathfrak{m}}} (T_R(\lambda)) \otimes_R R/\mathfrak{m}) \cong End_{G_R} (T_R(\lambda)) \otimes_R R/\mathfrak{m}.\] From this one concludes that $End_{G_{R/\mathfrak{m}}}(T_R(\lambda)) \otimes_R R/\mathfrak{m})$ is a local ring and that therefore $T_R(\lambda) \otimes R/\mathfrak{m}$ is indecomposable.

\begin{corollary} \cite[E.20]{Jantzen} We have \[T_R(\lambda) \otimes R/\mathfrak{m}  \cong T_{R/\mathfrak{m}} (\lambda).\]
\end{corollary}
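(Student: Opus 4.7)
The plan is to verify that the reduction $T_R(\lambda) \otimes_R R/\mathfrak{m}$ satisfies the three characterising properties of $T_{R/\mathfrak{m}}(\lambda)$ given in Proposition \ref{prop:tilting-ind}: it is a tilting module, it is indecomposable, and its $\lambda$-weight space is free of rank one while the only nonzero weight spaces have weights $\mu \leq \lambda$. Uniqueness in Proposition \ref{prop:tilting-ind} will then identify it with $T_{R/\mathfrak{m}}(\lambda)$.

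First I would check that the reduction is again tilting. By construction $T_R(\lambda)$ has both a good filtration and a Weyl filtration as a $G_R$-module, so the implications \ref{GF-1}$\,\Rightarrow\,$\ref{GF-3} and \ref{GF-4}$\,\Rightarrow\,$\ref{GF-6} in Lemma \ref{lem:tilting} yield that $T_R(\lambda) \otimes_R R/\mathfrak{m}$ admits both a good filtration and a Weyl filtration as a $G_{R/\mathfrak{m}}$-module, i.e. it is a tilting module over $R/\mathfrak{m}$.

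Next, indecomposability is read off from the endomorphism-ring calculation recalled in the paragraph immediately preceding the corollary. Since $\End_{G_R}(T_R(\lambda))$ is local and the canonical base-change map
\[
\End_{G_R}(T_R(\lambda)) \otimes_R R/\mathfrak{m} \;\longrightarrow\; \End_{G_{R/\mathfrak{m}}}(T_R(\lambda) \otimes_R R/\mathfrak{m})
\]
is an isomorphism, the right-hand side is a quotient of a local ring by a proper ideal, hence local. Therefore $T_R(\lambda) \otimes_R R/\mathfrak{m}$ is indecomposable.

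Finally, weight-space decompositions are compatible with extension of scalars, so $(T_R(\lambda) \otimes_R R/\mathfrak{m})_\mu \cong T_R(\lambda)_\mu \otimes_R R/\mathfrak{m}$. Since $T_R(\lambda)_\lambda$ is free of rank one over $R$ and $T_R(\lambda)_\mu = 0$ unless $\mu \leq \lambda$, the same holds after tensoring with $R/\mathfrak{m}$. The reduction is thus an indecomposable tilting $G_{R/\mathfrak{m}}$-module whose $\lambda$-weight space is one-dimensional and which has no nonzero weight spaces strictly above $\lambda$, so Proposition \ref{prop:tilting-ind} forces it to be isomorphic to $T_{R/\mathfrak{m}}(\lambda)$. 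The only nontrivial input is the base-change isomorphism for $\End$-spaces, which however was already asserted just above the corollary and rests on the freeness of the relevant Hom-modules over $R$; everything else is formal.
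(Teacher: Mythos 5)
Your argument is correct and is essentially the paper's (and Jantzen's E.20) argument: reduction is tilting by Lemma \ref{lem:tilting}, indecomposable because the base-change isomorphism for $\End$-spaces stated just before the corollary makes $\End_{G_{R/\mathfrak{m}}}(T_R(\lambda)\otimes_R R/\mathfrak{m})$ a quotient of the local ring $\End_{G_R}(T_R(\lambda))$, and then the highest-weight characterisation in Proposition \ref{prop:tilting-ind} identifies it with $T_{R/\mathfrak{m}}(\lambda)$. No gaps; this matches the paper's route.
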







\section{Tilting modules for quantum groups}\label{tilting-quantum}

We review some results about quantized tilting modules which are needed to show that the category of tilting modules can be obtained as a mod $\mathfrak{m}$ evaluation.

\subsection{Lusztig's integral form} We denote by $\mathcal{A} = \Z[v,v^{-1}]$ the Laurent polynomial ring in an indeterminate $v$  and by $U = U_{\mathcal{A}}$ Lusztig's integral form (with divided powers) of the Drinfeld quantized enveloping algebra $U_v$ over $\Q(v)$ \cite{Lusztig-book}. Any commutative ring $R$ with 1 and a fixed invertible element $\bf{v}$ can be regarded as a commutative $\mathcal{A}$-algebra via the homomorphism $\phi: \mathcal{A} \to R$ such that $\phi(v^n) = {\mathbf{v}}^n$ for all $n \in \Z$. For any $\mathcal{A}$-algebra $R$ we denote $U_R = U_{\mathcal{A}} \otimes_{\mathcal{A}} R$. We apply this for $R = \Q(q)$ and specialize the generic parameter $v$ to a primitive $\ell$-th root of unity $q$ with $\ell$ odd, and assume that $\ell$ is not divisible by $3$ if $\mathfrak{g} \cong \mathfrak{g}_2$. We also assume $\ell > h$, the Coxeter number of $\mathfrak{g}$. Then we denote by $Rep(U_q(\mathfrak{g}))$ the category of finite dimensional representations of $U_q(\mathfrak{g})$ of type $1$ as in \cite{Andersen-Polo-Wen}.

\subsection{Quantized tilting modules over $\Q(q)$}

In $Rep(U_q(\mathfrak{g}))$ we have analogs of Weyl and induced modules (where the latter can be defined by means of the triangular decomposition of $U_q(\mathfrak{g})$) which we denote again by $V(\lambda)$ and $H^0(\lambda)$, $\lambda \in X^+$. These satisfy exactly the same $Ext$-vanishing conditions as in the modular case. In particular \[ Ext^i (V(\lambda), H^0(\lambda)) = 0\] for $i > 0, \ \lambda \in X^+$ (see \cite{Andersen-quantized} and the references therein). This uses the quantum variant of Kempf's vanishing theorem. We can then define modules with a good filtration and a Weyl filtration and define tilting modules as in the modular case. The main statement (proposition \ref{prop:tilting-ind}) about indecomposable tilting modules still holds and we denote by $T(\lambda)$ the indecomposable tilting module attached to $\lambda \in X^+$.

\subsection{Quantized tilting modules over $R$}

As in the modular case, the entire theory of tilting modules can be developed over a (complete) discrete valuation ring. The crucial ingredient here is that Kempf's vanishing theorem holds over the ground ring $\mathcal{A}$ as well. This follows from work of Ryom-Hansen \cite{Ryom-Hansen} and Kaneda \cite{Kaneda-cohom} \cite{Kaneda-cohom-preprint}. For an $\mathcal{A}$-algebra $R$ we denote by $H^0_R(\lambda)$ and $V_R(\lambda)$ the corresponding induced module and Weyl module. For any base change $\mathcal{A} \to R$ we have \[ H^0_{\mathcal{A}}(\lambda) \otimes_{\mathcal{A}} R = H^0_R (\lambda) \ \forall \lambda \in X^+ \] and likewise for $V_R(\lambda)$. Again by Kempf's vanishing theorem over $\mathcal{A}$ and standard facts about $H^i$ we get the vanishing of $Ext^i_{U_{\mathcal{A}}}(\mathcal{A}, H^0_{\mathcal{A}}(\lambda) \otimes H^0_{\mathcal{A}}(\mu) )$ for all $\lambda,\mu \in X^+$ and $i > 0$ (see also \cite{Kaneda-cohom-preprint}). The remaining arguments are the same as in the modular case. 
 In particular for each $\lambda \in X^+$ there is an indecomposable tilting module $T_R(\lambda)$, $\lambda \in X^+$, such that its $\lambda$-weight space is free of rank 1 \cite[Theorem 7.6]{Kaneda-cohom-preprint} and every tilting module decomposes uniquely into a direct sum of the $T_R(\lambda)$. 
 
Since the $\lambda$-weight space is free of rank 1, the arguments of Jantzen \cite[E.20]{Jantzen} go through and we obtain

\begin{corollary} Let $R$ be a complete discrete valuation ring with residue field $R/\mathfrak{m} = \mathbb{Q}(q)$ (or $\Co$) of characteristic 0. 
Then \begin{align*} T_R(\lambda) \otimes R/\mathfrak{m}  & \cong T_{R/\mathfrak{m}} (\lambda) \\  End (T_R(\lambda)) \otimes_R R/\mathfrak{m}) & \cong End (T_R(\lambda)) \otimes_R R/\mathfrak{m}.\end{align*}
\end{corollary}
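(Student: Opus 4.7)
The plan is to transport Jantzen's argument \cite[E.20]{Jantzen} for modular tilting modules to the quantum setting, using the $\mathcal{A}$-form of the quantum group and the fact that Kempf's vanishing theorem (hence the good/Weyl filtration machinery) now holds over $\mathcal{A}$, and thus over any $\mathcal{A}$-algebra $R$.

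First I would verify that $T_R(\lambda) \otimes_R R/\mathfrak{m}$ is a tilting module for $U_{R/\mathfrak{m}}$. Since $T_R(\lambda)$ admits both a good and a Weyl filtration over $R$, and each $H^0_R(\mu)$ and $V_R(\mu)$ is compatible with base change (i.e.\ $H^0_R(\mu) \otimes_R R/\mathfrak{m} \cong H^0_{R/\mathfrak{m}}(\mu)$ and similarly for $V_R(\mu)$), the filtrations descend to $T_R(\lambda) \otimes_R R/\mathfrak{m}$, giving it both kinds of filtration. Its weight multiplicities coincide with those of $T_R(\lambda)$, so $\lambda$ is the unique maximal weight and appears with multiplicity one.

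Next I would establish base change for $\operatorname{Hom}$. For $M$ with a Weyl filtration and $N$ with a good filtration over $R$, one shows by induction on the length of the Weyl filtration of $M$ (using the vanishing of $\operatorname{Ext}^1_{U_R}(V_R(\mu), N)$ which follows from good filtration) that $\operatorname{Hom}_{U_R}(M, N)$ is a free $R$-module and that the natural map
\[
\operatorname{Hom}_{U_R}(M, N) \otimes_R R/\mathfrak{m} \;\longrightarrow\; \operatorname{Hom}_{U_{R/\mathfrak{m}}}(M \otimes_R R/\mathfrak{m},\, N \otimes_R R/\mathfrak{m})
\]
is an isomorphism. Applied to $M = N = T_R(\lambda)$ this gives the second isomorphism of the corollary.

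Finally, I would deduce the first isomorphism. Since $T_R(\lambda)$ is indecomposable, $\operatorname{End}_{U_R}(T_R(\lambda))$ is a local ring, of the form $R \cdot \operatorname{id} + J$ where $J$ consists of endomorphisms acting by a non-unit on the rank-one weight space $T_R(\lambda)_\lambda$. Reducing modulo $\mathfrak{m}$ kills nothing outside this structure, and a quotient of a local ring by a proper ideal is local, so $\operatorname{End}_{U_{R/\mathfrak{m}}}(T_R(\lambda) \otimes_R R/\mathfrak{m})$ is local; hence $T_R(\lambda) \otimes_R R/\mathfrak{m}$ is indecomposable. Being an indecomposable tilting module over $R/\mathfrak{m}$ with maximal weight $\lambda$ of multiplicity one, it must be $T_{R/\mathfrak{m}}(\lambda)$ by uniqueness of the indecomposable tilting modules.

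The main obstacle is the freeness and base change for $\operatorname{Hom}_{U_R}$: one needs the quantum analogue of Kempf vanishing over $\mathcal{A}$ (available through Ryom-Hansen and Kaneda, as cited in the paper) in order to carry out the inductive reduction along Weyl filtrations. Once that input is in place, every remaining step is a verbatim translation of Jantzen's classical argument.
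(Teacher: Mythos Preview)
Your proposal is correct and follows exactly the approach the paper indicates: the paper gives no proof beyond the sentence ``Since the $\lambda$-weight space is free of rank 1, the arguments of Jantzen \cite[E.20]{Jantzen} go through,'' and your write-up is precisely a fleshing-out of that argument, with the same structure the paper spells out earlier in the modular case (Section~5, \cite[E.20]{Jantzen}). You have also correctly identified the one nontrivial input specific to the quantum situation, namely Kempf vanishing over $\mathcal{A}$ via Ryom-Hansen and Kaneda, which the paper invokes just before stating the corollary.
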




\section{$k$-negligible ideals for tilting modules} \label{sec:tilting-eval}

The remaining necessary properties to define the $k$-negligible ideals for either quantized or modular tilting modules follow from Lusztig's theory of canonical bases \cite[Part 4]{Lusztig-book}.

\subsection{Weights and Weyl groups}

We will use the following notation (similar to \cite{Parshall-Wang}) in the following:

\begin{itemize}
\item $R$ an irreducible root system in an euclidian space $E$, $R^+$ a fixed set of positive roots, $\phi$ the simple roots.
\item $h$ the Coxeter number of $R$, $\rho$ the half sum of the positive roots.
\item $\alpha^{\vee} = 2\alpha/(\alpha,\alpha)$ the dual root to $\alpha \in R$.
\item $W$ the finite Weyl group.
\item $X$ the integral weight lattice of $R$.
\item $X^+$ the dominant integral weights.
\item $Q$ the $\Z$-span of $R$.
\item $W_{\ell} = W \rtimes T_{\ell}$ for the group of translations $T_{\ell} \ni t_{\ell}:E \to E$, $x \mapsto x + \ell\lambda$ for $\lambda \in Q$ and an odd integer $\ell$.
\item $d_{\alpha} = (\alpha,\alpha)/(\alpha_0,\alpha_0)$ for the shortest root $\alpha_0$ of $\phi$.
\item $wht(\lambda) = \sum_{\alpha \in \phi} r_{\alpha} d_{\alpha}$ for $\lambda = \sum_{\alpha \in \phi} r_{\alpha} \alpha$. 
\item $C_{\ell}$ the fundamental alcove $\{ \lambda \in X^+ \ | \ 0 < \ <\lambda + \rho, \alpha^{\vee}>  \ < \ell \ \text{ for all } \alpha \in R^+\}$ and $\overline{C}_{\ell}$ its closure. 
\end{itemize}

Recall that the affine Weyl group acts on $X$. The fundamental domain for this action is the closed alcove $\overline{C}_{\ell}$. The affine Weyl group $W_{\ell}$ can be identified with the set of alcoves in $X$ by matching $w \in W_{\ell}$ with $w \cdot C_{\ell}$. The alcove corresponding to $w$ is denoted by $C_w$. We denote $W_{\ell}^+ = \{ w \in W \ | \ w \cdot C_{\ell} \subset X^+ \} \subset W$.

\subsection{Based modules} \label{sec:based}

For the notion of a based module $(M,B)$ ($M \in \mathcal{C})$ we refer to  \cite[27.1.2]{Lusztig-book}. Based modules form a category $\tilde{C}$ with morphisms as in \cite[27.1.3]{Lusztig-book}. Based modules are closed under direct summands \cite[27.1.2]{Lusztig-book}, submodules and quotients \cite[27.1.3]{Lusztig-book}. 

\medskip
The tensor product of two based modules $M \otimes M'$ with the naive basis $B \otimes B'$ is not a based module, but there is a modified basis $B \lozenge B'$ with elements $b \lozenge b'$ with $(b,b') \in B \times B'$ which turns $(M \otimes M', B \lozenge  B')$ into a based module \cite[27.3]{Lusztig-book}. We denote by $_{\mathcal{A}}M \otimes M'$ the $\mathcal{A}$-submodule generated by the basis $B \otimes B'$ (where $\mathcal{A} = \Z[v,v^{-1}]$). The modified basis is then an $\mathcal{A}$-basis of $_{\mathcal{A}}M \otimes M'$ \cite[Theorem 27.3.3]{Lusztig-book}.

\medskip
The anti-involution $\omega$ of $U$ gives rise to a duality on $\mathcal{C}$, and we denote its dual by $^{\omega}M$. For a based module $(M,B)$ there is a partition of $B$ into subsets $B[\lambda]$ such that $B[0]$ is the base for the space of coinvariants $M_*$. For based modules $M,M'$ we can canonically identify $Hom(M,M')$ with the dual of the space of coinvariants of $M \otimes ^{\omega}M'$. This gives $Hom(M,M')$ the structure of a based module with basis $(B \lozenge ^{\omega}B')[0]$ \cite[27.2.5]{Lusztig-book}. In particular the $\mathcal{A}$-integrality properties of based modules show that $\tilde{C}$ is an $\mathcal{A}$-linear monoidal category.

The simple objects in $\tilde{C}$ are parametrized by $X^+$, i.e. every simple object is isomorphic to $(L(\lambda), B(\lambda),\Psi_{\lambda})$ for some unique $\lambda \in X^+$. Therefore any nontrivial based module admits a filtration with subquotients isomorphic to $(L(\lambda), B(\lambda),\Psi_{\lambda})$ for some $\lambda \in X^+$.

\begin{lemma} \label{lem:good-filtration-tensor} \cite[Corollary 1.9]{Kaneda-based} Let $M, M'$ be two based modules. Then $M_{\mathcal{A}} \otimes_{\mathcal{A}} M'_{\mathcal{A}}$ admits a filtration of $U_{\mathcal{A}}$-modules with each subquotient isomorphic to some $L_{\mathcal{A}}(\lambda), \ \lambda \in X^+$.
\end{lemma}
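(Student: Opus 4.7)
The plan is to reduce the statement to the two facts already compiled in Section~\ref{sec:based}. By \cite[Theorem~27.3.3]{Lusztig-book} the pair $(M\otimes M', B\lozenge B')$ is a based module whose modified basis $B\lozenge B'$ is an $\mathcal{A}$-basis of $M_{\mathcal{A}}\otimes_{\mathcal{A}} M'_{\mathcal{A}}$; moreover the simple objects of $\tilde{C}$ are precisely the $L(\lambda)$, $\lambda\in X^+$. It therefore suffices to show that an arbitrary based module $(P,B_P)$ admits a filtration by based submodules whose graded pieces are the simple based modules, and that this filtration is automatically defined over $\mathcal{A}$.

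I would argue by induction on $|B_P|$. Choose a weight $\lambda$ maximal among those occurring in $P$ and pick a basis vector $b_0 \in B_P$ of weight $\lambda$. By maximality $b_0$ is a highest weight vector, so the $U$-submodule it generates is a quotient of the Weyl module $V(\lambda)$. The partition $B_P = \sqcup_\mu B_P[\mu]$ together with $b_0\in B_P[\lambda]$ forces this cyclic submodule to be a \emph{based} submodule of $(P,B_P)$, isomorphic to $L(\lambda)$, with basis some subset $B_0\subseteq B_P$; this is the content of \cite[27.1.2, 27.2.5]{Lusztig-book}.

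Because $B_0$ is part of the $\mathcal{A}$-basis $B_P$, the $\mathcal{A}$-submodule $N_{\mathcal{A}}$ spanned by $B_0$ is a $U_{\mathcal{A}}$-stable free $\mathcal{A}$-direct summand of $P_{\mathcal{A}}$ isomorphic to $L_{\mathcal{A}}(\lambda)$. The quotient $P/N$ inherits a based structure by closure of $\tilde{C}$ under quotients \cite[27.1.3]{Lusztig-book}, with $\mathcal{A}$-basis the image of $B_P\setminus B_0$ and of strictly smaller cardinality. Applying the induction hypothesis to $P/N$ and pulling back to $P_{\mathcal{A}}$ yields the required filtration on $M_{\mathcal{A}}\otimes_{\mathcal{A}} M'_{\mathcal{A}}$ with subquotients of the form $L_{\mathcal{A}}(\mu)$.

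The main obstacle is $\mathcal{A}$-integrality at each inductive step: over $\Q(v)$ the argument is a routine composition-series induction, but one needs the partition structure of a based module to guarantee that the submodule generated by $b_0$ already has a basis lying inside $B_P$. Without this, one only obtains a $\Q(v)$-filtration, and there is no reason for the cyclic $U$-submodule to descend to an $\mathcal{A}$-direct summand; it is precisely Lusztig's machinery in \cite[Chap.~27]{Lusztig-book} that removes this obstruction and lets the induction propagate the $\mathcal{A}$-structure through the quotient.
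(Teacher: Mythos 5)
The paper offers no argument of its own here: the lemma is quoted verbatim from \cite[Corollary 1.9]{Kaneda-based}, whose proof rests on Lusztig's cell filtration for based modules together with \cite[Theorem 27.3.3]{Lusztig-book}. Your first reduction (tensor product of based modules is based, with $B\lozenge B'$ an $\mathcal{A}$-basis of $M_{\mathcal{A}}\otimes_{\mathcal{A}}M'_{\mathcal{A}}$) is exactly the right starting point, but the inductive step contains a genuine gap. The assertion that the cyclic submodule $U b_0$ generated by a maximal-weight basis vector is a \emph{based} submodule spanned by a subset $B_0\subseteq B_P$ is precisely the nontrivial integrality statement, and it is not ``the content of [27.1.2, 27.2.5]'': 27.1.2 gives examples and closure under direct summands, and 27.2.5 concerns the based structure on coinvariants and $\Hom$-spaces; neither says anything about submodules generated by a single basis vector. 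The worry is real, not bureaucratic: canonical bases are compatible with filtrations but not with splittings --- already for $U_q(\sl_2)$ the trivial isotypic component of $V_1\otimes V_1$ is not spanned by a subset of $B\lozenge B'$ --- so ``basis-alignment'' of a distinguished copy of $L(\lambda)$ inside $P$ is exactly the kind of claim that must be extracted from Lusztig's machinery rather than asserted, and your own closing paragraph concedes that this is the crux without discharging it.

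The standard repair, and the route behind the cited Corollary 1.9, avoids cyclic submodules altogether: use the canonical partition $B_P=\bigsqcup_{\lambda}B_P[\lambda]$ and Lusztig's structure theorem (27.1.8 and 27.2 of \cite{Lusztig-book}), which states that the $\mathcal{A}$-span of $\bigcup_{\lambda'\geq\lambda}B_P[\lambda']$ is a $U_{\mathcal{A}}$-stable based submodule and that the successive subquotients are isomorphic, as based modules, to direct sums of $(L(\lambda),B(\lambda))$. Since each term of this filtration is spanned by a subset of the $\mathcal{A}$-basis $B_P$, the filtration is automatically defined over $\mathcal{A}$ and each subquotient is a direct sum of $L_{\mathcal{A}}(\lambda)$'s; refining by splitting off one copy of $B(\lambda)$ at a time yields the statement of the lemma. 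If you prefer to keep your ``peel off a top simple'' induction, you must replace the appeal to 27.1.2/27.2.5 by this cell-filtration theorem (or by the precise lemma in 27.1 of \cite{Lusztig-book} on basis vectors of maximal weight), i.e.\ by exactly the input that \cite[Corollary 1.9]{Kaneda-based} packages; as written, the $\mathcal{A}$-structure is not propagated but presupposed.
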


We can now base change from $\mathcal{A}$ to $R = \widehat{\Q[v]_{(v-q)}}$. Then the lemma immediately implies that the tensor product of two modules with a Weyl filtration (or good filtration) has a again a Weyl filtration (good filtration). 

Let $G_k$ as in section \ref{tilting-modular} denote a semisimple simply connected algebraic group over a field $\Bbbk$ of characteristic $p$. Then (as in \cite[1.10]{Kaneda-based}) we can view $\Z$ as an $\mathcal{A}$-algebra and obtain a ring isomorphism \[ U_{\mathcal{A}}/(K_{\alpha} - 1)_{\alpha \in \Pi} \ \otimes_{\mathcal{A}} \Z \cong Dist(G) \] for the Chevalley $\Z$-form $G$ of $G_k$. The $Dist(G)$-modules that are free of finite rank over $\Z$ are $G$-modules such that $L_{\mathcal{A}}(\lambda) \otimes_{\mathcal{A}} \Z$ is the Weyl module for $G$ of highest weight $\lambda, \ \lambda \in X^+$. Therefore lemma \ref{lem:good-filtration-tensor} implies in the modular case as well that the tensor product of two modules with a Weyl filtration (or good filtration) has a again a Weyl filtration (good filtration). 

\begin{corollary} The tensor product of two quantized tilting modules over $R$ and the tensor product of two modular tilting modules over the Witt ring $W(k)$ is a tilting module.
\end{corollary}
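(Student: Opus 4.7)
The corollary is essentially a packaging of the preceding paragraph. Recall that a module is tilting precisely when it has both a Weyl filtration and a good filtration, so it suffices to show that both properties are preserved under the tensor product of two tilting modules over $R$ (respectively $W(k)$).

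For the Weyl filtration, the plan is a double-filtration argument. Given tilting modules $T_1, T_2$, I would pick Weyl filtrations $0 = T_1^{(0)} \subset T_1^{(1)} \subset \dots \subset T_1^{(m)} = T_1$ with subquotients $V_R(\lambda_i)$, and similarly $0 = T_2^{(0)} \subset \dots \subset T_2^{(n)} = T_2$ with subquotients $V_R(\mu_j)$. The induced filtration of $T_1 \otimes T_2$ by the submodules $T_1^{(i)} \otimes T_2^{(j)}$ then has successive subquotients isomorphic to $V_R(\lambda_i) \otimes V_R(\mu_j)$. Each Weyl module is a based module (its canonical basis), so Lemma \ref{lem:good-filtration-tensor} applied to $V_{\mathcal{A}}(\lambda_i) \otimes_{\mathcal{A}} V_{\mathcal{A}}(\mu_j)$ yields a $U_{\mathcal{A}}$-stable filtration with subquotients $L_{\mathcal{A}}(\nu) = V_{\mathcal{A}}(\nu)$. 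Base change $\mathcal{A} \to R$ (respectively $\mathcal{A} \to \mathbb{Z} \to W(k)$, using $U_{\mathcal{A}}/(K_\alpha - 1)_{\alpha\in\Pi}\otimes_{\mathcal{A}} \mathbb{Z}\cong \mathrm{Dist}(G)$) preserves this filtration because the relevant modules are free over $\mathcal{A}$ and the identifications $V_{\mathcal{A}}(\lambda)\otimes_{\mathcal{A}} R \cong V_R(\lambda)$ hold. Refining the two-step filtration produces the required Weyl filtration on $T_1 \otimes T_2$.

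For the good filtration, I would exploit duality rather than redo the argument: $V$ has a good filtration if and only if $V^*$ has a Weyl filtration. Since the dual of the indecomposable tilting module $T(\lambda)$ is $T(-w_0 \lambda)$, both $T_1^*$ and $T_2^*$ are tilting, and $(T_1 \otimes T_2)^* \cong T_2^* \otimes T_1^*$. Applying the Weyl-filtration case just established to $T_2^* \otimes T_1^*$ yields a Weyl filtration on the dual, and therefore a good filtration on $T_1 \otimes T_2$.

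The main technical subtlety — and, in fact, the only nontrivial input — is Kaneda's Lemma \ref{lem:good-filtration-tensor} itself, plus the verification that the base change $\mathcal{A} \to R$ (or to $W(k)$) preserves the filtrations in question; this is what makes it legitimate to pass from the integral based-module statement to the statement over $R$ and $W(k)$. Everything else is a routine refinement of filtrations.
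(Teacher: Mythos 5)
Your proposal is correct and follows essentially the same route as the paper: the whole content is Kaneda's based-module result (Lemma \ref{lem:good-filtration-tensor}) together with base change from $\mathcal{A}$ to $R$ (respectively via $\mathbb{Z}$ and $\mathrm{Dist}(G)$ to $W(k)$), which is exactly what the paragraph preceding the corollary invokes. Your explicit double-filtration refinement and the duality trick $V$ has a good filtration iff $V^*$ has a Weyl filtration merely spell out details the paper leaves implicit, so there is no substantive difference.
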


\subsection{The mod $\mathfrak{m}$ evaluation}

We want to realize now $Tilt(U_{q}(\mathfrak{g}), \Q(q))$ and $Tilt(G,k)$ as mod $\mathfrak{m}$ evaluations. In the modular case (by section \ref{tilting-modular}) we should take a complete discrete valuation ring of characteristic zero with residue field $\Bbbk$ of characteristic $p$. In order to measure the $p$-divisibility, its maximal ideal should be generated by $p$. By \cite[II.Theorem 3]{Serre-local} for every perfect field $\Bbbk$ of characteristic $p$, there exists a unique complete discrete valuation ring which is absolutely unramified [i.e., $p$ is a uniformizing element] and has $\Bbbk$ as its residue field: namely $W(k)$, the ring of Witt vectors. The results of section \ref{tilting-modular}, \ref{tilting-quantum} and \ref{sec:based} imply

\begin{theorem} \label{thm:tilting-eval} a) Let $R = \widehat{\Q[v]}_{v-q}$ where $q$ is a primitive $\ell$-th root of unity where $\ell > h$ and $\ell$ is not divisible by 3 if $\mathfrak{g}$ contains $\mathfrak{g}_2$. Then $Tilt(U_q(\mathfrak{g}),\Q(q))$ is the mod $\mathfrak{m}$ evaluation of $Tilt(U_v(\mathfrak{g}), R)$ where $\mathfrak{m} = (v-q)$.\\
b) Let $\Bbbk$ be a perfect field of characteristic $p$. Then $Tilt(G_{k},k))$ is the mod $\mathfrak{m}$ evaluation of $Tilt(G_{W(k)}, W(k))$ where $\mathfrak{m} = (p)$. 
\end{theorem}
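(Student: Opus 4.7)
The plan is to verify, for both (a) the quantum case with $R=\widehat{\mathbb Q[v]_{(v-q)}}$ and (b) the modular case with $R=W(k)$, the three conditions built into the definition of a mod $\mathfrak m$ evaluation: the $\Hom$ spaces in $\Tilt(U_v(\g),R)$ respectively $\Tilt(G_{W(k)},W(k))$ are free $R$-modules of finite rank; the isomorphism classes of objects are in bijection with those of $\Tilt(U_q(\g),\Q(q))$ respectively $\Tilt(G_k,k)$ under the functor $-\otimes_R R/\mathfrak m$; and for any two tilting modules $T,T'$ over $R$ the natural base-change map
\[
\Hom_{U_R}(T,T')\otimes_R R/\mathfrak m\ \longrightarrow\ \Hom_{U_{R/\mathfrak m}}(T\otimes_R R/\mathfrak m,\ T'\otimes_R R/\mathfrak m)
\]
is an isomorphism. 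Once these are in place, the definition given in Section~\ref{sec:prel} applies verbatim.

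First I would dispose of the freeness statement and the bijection on objects. Every tilting module over $R$ is free of finite rank as an $R$-module (this is part of the definition via Weyl and good filtrations, whose subquotients $V_R(\lambda)$ and $H^0_R(\lambda)$ are free); since $R$ is a (complete) discrete valuation ring, any $R$-submodule of a free $R$-module of finite rank is itself free, so $\Hom_{U_R}(T,T')$ is free. The bijection on isomorphism classes is immediate from the two classifications of indecomposable tiltings by $X^+$ recalled in Sections \ref{tilting-modular} and \ref{tilting-quantum}, together with the already stated isomorphism $T_R(\lambda)\otimes_R R/\mathfrak m\cong T_{R/\mathfrak m}(\lambda)$; the Krull--Schmidt property on both sides extends this to arbitrary tilting modules.

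The crux is the $\Hom$ base change. I would deduce it from the stated endomorphism isomorphism $\End_{U_R}(T_R(\lambda))\otimes_R R/\mathfrak m\cong \End_{U_{R/\mathfrak m}}(T_{R/\mathfrak m}(\lambda))$ by a block-decomposition trick: given tilting modules $T,T'$ over $R$, the module $T\oplus T'$ is again tilting (direct sums of tiltings are tilting, and tensor products are tilting by the corollary just before the theorem). Applying the endomorphism isomorphism to $T\oplus T'$ and using that the idempotents cutting out the summands $T$ and $T'$ lift compatibly under reduction mod $\mathfrak m$ (because $T_R(\lambda)\otimes R/\mathfrak m\cong T_{R/\mathfrak m}(\lambda)$ is indecomposable with local endomorphism ring), the $(T,T')$-block of the endomorphism isomorphism gives precisely the desired $\Hom$ base change. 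Alternatively, one can invoke the Ext-vanishing $\Ext^1_{U_R}(V_R(\lambda),H^0_R(\mu))=0$ for tilting-pair ingredients and run a universal-coefficient/five-lemma induction over the Weyl and good filtrations, reducing to the case $\Hom_{U_R}(V_R(\lambda),H^0_R(\mu))=R$ or $0$, which trivially base-changes.

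The main obstacle is not in the final synthesis but in ensuring the integrality ingredients are available: for (b) this is handled by the fact that $W(k)$ is a complete DVR of residue characteristic $p$ with $p$ a uniformizer (Serre's theorem), letting all of Jantzen's arguments for tilting modules over complete DVRs apply; for (a) it requires Kempf vanishing over the Lusztig form $\mathcal A$ (Ryom-Hansen, Kaneda), the resulting theory of induced and Weyl modules over $\mathcal A$-algebras, and Kaneda's rank-one result for the highest weight space of $T_R(\lambda)$, all recalled in Section~\ref{tilting-quantum}. Granted those ingredients, the completion of $\mathbb Q[v]_{(v-q)}$ is a complete DVR with uniformizer $v-q$ and residue field $\mathbb Q(q)$, so the argument in the preceding paragraph proceeds identically in the two cases and the theorem follows.
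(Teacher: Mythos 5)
Your proposal is correct and follows essentially the same route as the paper, which proves the theorem simply by assembling the very ingredients you list: freeness of Hom spaces over the complete DVRs $\widehat{\Q[v]}_{(v-q)}$ and $W(k)$, the isomorphisms $T_R(\lambda)\otimes_R R/\mathfrak m\cong T_{R/\mathfrak m}(\lambda)$ and the endomorphism base change from Jantzen (extended to the quantum case via Kempf vanishing over $\mathcal A$ and Kaneda's rank-one result), and closure of tilting modules under tensor product over $R$ via based modules. One minor caution: the endomorphism base-change displayed in the paper is stated only for the indecomposables $T_R(\lambda)$, so your block-decomposition trick on $T\oplus T'$ should be justified by your second route (the $\Ext^1$-vanishing and filtration induction reducing to $\Hom(V_R(\lambda),H^0_R(\mu))$), which is exactly the standard argument underlying Jantzen's results that the paper cites.
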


As both rings are discrete valuation rings, all the $N_I$ are of the form $N_k$ for $k=1,2,\ldots$.

\subsection{The nullity in the quantum case}

The category $Tilt(U_q(\mathfrak{g}),\Q(q))$ is a braided monoidal category. The categorical dimension $\dim(V)$ is often called the quantum dimension of $V$. 

\begin{proposition} (\cite[Proposition 2.2, Theorem 3.3]{Parshall-Wang}) For $\lambda \in X^+$
\begin{enumerate}
\item We have \[ \dim V(\lambda) = \prod_{\alpha \in R^+} \frac{q^{-d_{\alpha}(\lambda + \rho,\alpha^{\vee})} - q^{d_{\alpha}(\lambda + \rho,\alpha^{\vee})}}{q^{-d_{\alpha}( \rho,\alpha^{\vee})} - q^{d_{\alpha}(\rho,\alpha^{\vee})}}.\]
\item $\dim V(\lambda) \neq 0$ if and only if $\lambda$ is $\ell$-regular..
\item If $w \in W_{\ell}$ satisfies $w \cdot \lambda \in X^+$, then \[ \dim V(w \cdot \lambda)  = (-1)^{l(w)} \dim V (\lambda)\] for the length function $l$ on $W$.
\end{enumerate}
\end{proposition}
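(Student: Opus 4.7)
The natural strategy is to reduce each part to the quantum Weyl character formula, specialised at the grouplike element implementing the categorical trace. Recall that in a ribbon category obtained from $U_q(\mathfrak{g})$ the categorical dimension of a type-$1$ module $V$ is the evaluation of its character $\operatorname{ch} V$ at the grouplike element $K_{2\rho}$, so that
\[
\dim V(\lambda) \;=\; \operatorname{ch} V(\lambda)\bigl(K_{2\rho}\bigr) \;=\; \sum_{\mu} (\dim V(\lambda)_\mu)\, q^{(2\rho,\mu)}.
\]
Substituting the quantum Weyl character formula and using the Weyl denominator identity, one manipulates the alternating sums exactly as in the classical case to reach, for each positive root $\alpha$, a factor $[d_\alpha (\lambda+\rho,\alpha^\vee)]_q\,/\,[d_\alpha(\rho,\alpha^\vee)]_q$, where $[n]_q=(q^n-q^{-n})/(q-q^{-1})$. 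Multiplying numerator and denominator of each factor by $(q-q^{-1})$ and cancelling yields the expression stated in part (1). The appearance of $d_\alpha$ is precisely the adjustment between $(\alpha,\alpha)$-normalisations needed so that $K_{2\rho}$ is paired against the coroot, as in Lusztig's book or Andersen--Polo--Wen.

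For part (2), one analyses the product in (1) factor by factor. At a primitive odd $\ell$-th root of unity $q$, the quantum integer $[m]_q$ vanishes if and only if $\ell \mid m$. Hence the numerator factor indexed by $\alpha$ vanishes iff $\ell \mid d_\alpha (\lambda+\rho,\alpha^\vee)$, which (after translating $\ell$ across $d_\alpha$ using the compatibility with the short-root normalisation built into the definition of $W_\ell$) is exactly the condition that $\lambda+\rho$ lies on an affine reflection hyperplane for $W_\ell$, i.e.\ that $\lambda$ fails to be $\ell$-regular. The assumption $\ell > h$ ensures that $0 < d_\alpha(\rho,\alpha^\vee) < \ell$ for every $\alpha \in R^+$, so no denominator ever vanishes; the odd condition and the exclusion for $\mathfrak{g}_2$ guarantee that the normalisations $d_\alpha$ are coprime to the relevant torsion. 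Combining these two observations gives the equivalence in (2).

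For part (3), one uses the semidirect decomposition $W_\ell = W \ltimes T_\ell$. If $w = t_{\ell\gamma}$ is a translation with $\gamma \in Q$, then $(w\cdot\lambda + \rho, \alpha^\vee) = (\lambda+\rho,\alpha^\vee) + \ell(\gamma,\alpha^\vee)$. Since $(\gamma,\alpha^\vee) \in \mathbb{Z}$, the corresponding exponent shifts by an integer multiple of $\ell$, which contributes a factor $q^{\pm \ell d_\alpha (\gamma,\alpha^\vee)} = 1$; hence translations leave each factor of the product in (1) unchanged. For $w \in W$ finite, $(w\cdot\lambda+\rho,\alpha^\vee)=(\lambda+\rho,w^{-1}\alpha^\vee)$, so $w$ merely permutes the positive coroots while turning exactly $\ell(w)$ of them into their negatives; each sign change introduces an overall $-1$ in the corresponding factor $q^{-x}-q^{x}$, and these accumulate to $(-1)^{\ell(w)}$. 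The denominator is manifestly $W$-invariant. Decomposing a general $w \in W_\ell$ as (finite part)$\cdot$(translation) and combining both computations yields (3).

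The only substantive technical point, and the step I expect to be the main obstacle, is the careful bookkeeping of the short-root factors $d_\alpha$ when matching the $\ell$-divisibility condition in (2) with the intrinsic definition of $\ell$-regularity coming from the dot action of $W_\ell$; this is where the hypotheses $\ell$ odd, $\ell>h$, and $3\nmid \ell$ in type $G_2$ enter and must be used consistently. All other steps are formal consequences of the Weyl character formula and the root-of-unity evaluation of quantum integers.
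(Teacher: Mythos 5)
The paper does not actually prove this proposition---it is imported directly from Parshall--Wang (\cite[Prop.~2.2, Thm.~3.3]{Parshall-Wang})---so there is no in-paper argument to compare against. Your proof plan follows the standard derivation as it appears in that reference: compute the categorical dimension as the trace of $K_{2\rho}$, apply the quantum Weyl character and denominator formulas to obtain the product expression, read off the vanishing from the root-of-unity behaviour of quantum integers, and track the dot action via the decomposition $W_\ell = W \ltimes T_\ell$. The bookkeeping of the factors $d_\alpha$ and their coprimality to $\ell$ (which is where the hypotheses $\ell$ odd, $\ell > h$, and $3\nmid\ell$ for $\mathfrak g_2$ are used) is exactly the subtle point and you flag it correctly. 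One small clarification worth recording: the phrase ``length function $l$ on $W$'' in part (3) must be read as the length of the finite part $w_f$ when $w = w_f\, t_{\ell\gamma}$, precisely because---as your translation computation shows---$T_\ell$ acts trivially on quantum dimensions, so only the image of $w$ in $W$ matters; your decomposition argument makes this explicit, which is the right way to handle it. The proposal is correct.
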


The dimension formula can be rewritten as a product
\[ \prod_{\alpha \in R^+} \frac{[(\lambda +\ \rho, \alpha)]}{[(\rho, \alpha)]} \]
where $[ \ ]$ denotes the $q$ number. We deduce from this formula the following corollary.

\begin{corollary}\label{modulenullity}
 If $T(\la)=V(\la) $ is a simple tilting module, its nullity is equal to the number
$n_\ell(\la)$ of positive roots $\al$ such that $\ell |(\la+\rho,\al)$.
\end{corollary}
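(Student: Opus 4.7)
The plan is to reduce the nullity of $T(\la) = V(\la)$ to the $(v-q)$-adic valuation of the quantum dimension $\dim T_R(\la)$, and then read off that valuation from the quantum Weyl dimension formula.

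First, I would exploit simplicity. Since $T(\la) = V(\la)$ is simple over $\Q(q)$, Schur's lemma gives $\End(T(\la)) = \Q(q) \cdot \id$. The quantum analogue of the Jantzen-style structural result recalled in Section \ref{tilting-quantum} says $\End_R(T_R(\la))$ is a local $R$-algebra with residue algebra $\End(T(\la))$, and as a torsion-free, finitely generated $R$-module it satisfies the hypotheses of Nakayama's lemma; hence $\End_R(T_R(\la)) = R \cdot \id$. Consequently every endomorphism has the form $c \cdot \id$ with $c \in R$, and $Tr(c \cdot \id) = c \cdot \dim T_R(\la)$. As $c$ ranges over $R$ (including units), the trace ideal is the principal $R$-ideal generated by $\dim T_R(\la)$, so the nullity of $T(\la)$ is exactly $\mathrm{ord}_{v-q}(\dim T_R(\la))$.

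Next, I would apply the quantum Weyl dimension formula over $R$ in the form
\[ \dim T_R(\la) \;=\; \prod_{\al \in R^+} \frac{[d_\al(\la + \rho, \al^\vee)]}{[d_\al(\rho, \al^\vee)]}, \qquad [m] \;=\; \frac{v^m - v^{-m}}{v - v^{-1}}, \]
and compute the $(v-q)$-adic order factor by factor. Writing $v^m - v^{-m} = v^{-m}(v^{2m} - 1)$ and using that $q$ is a primitive $\ell$-th root of unity with $\ell$ odd, each quantum integer $[m]$ has a simple zero at $v = q$ if $\ell \mid m$ and is a unit otherwise (the factor $v - v^{-1}$ is a unit at $v = q$ since $\ell > 2$). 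Under the standard normalization $(\al_0, \al_0) = 2$ we have $d_\al(\la + \rho, \al^\vee) = (\la + \rho, \al)$, so the numerators together contribute exactly $n_\ell(\la)$ units of $(v-q)$-order.

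The key remaining step, and the main obstacle, is showing every denominator $[d_\al(\rho, \al^\vee)]$ is a unit at $v = q$, i.e.\ that $\ell \nmid d_\al(\rho, \al^\vee)$ for every $\al \in R^+$. In the simply laced and most non-simply laced cases this is immediate from the bound $d_\al(\rho, \al^\vee) \leq h - 1 < \ell$; the remaining non-simply laced types need a brief check on long roots, and the hypothesis $3 \nmid \ell$ in the presence of a $\mathfrak{g}_2$ summand is exactly what rules out the one borderline divisibility. Once the denominators are all units, combining with the numerator count gives $\mathrm{ord}_{v-q}(\dim T_R(\la)) = n_\ell(\la)$, which by the first step equals the nullity of $T(\la)$.
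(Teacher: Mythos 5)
Your proof is correct and takes essentially the same route as the paper, whose one-line argument is that for a simple $T(\la)=V(\la)$ the nullity is determined by $Tr(\id_{T(\la)})$, with the order of vanishing at $v=q$ read off from the Parshall--Wang quantum Weyl dimension formula stated just before the corollary. You simply make explicit the steps the paper leaves implicit: the Schur/Nakayama argument identifying $\End_R(T_R(\la))=R\cdot \id$, the factor-by-factor count of zeros of the quantum integers, and the verification (using $\ell$ odd, $\ell>h$, and $3\nmid\ell$ for $\mathfrak{g}_2$) that the denominators are units.
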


\begin{proof} If $T(\lambda)$ is simple, the nullity is determined by $Tr(id_{T(\lambda)})$.
\end{proof}

\begin{example} The Steinberg module has nullity $|R^+|$. Zeros occur in the dimension formula whenever $(\lambda + \rho, \beta)_{\ell}$ is divisible by $\ell$ for $\beta \in R^+$. For the weight of the Steinberg module this factor equals $(\ell \cdot \rho,\beta)$ and hence each factor for $\beta \in R^+$ is divisible by $\ell$. Since $<St>$ is the smallest thick ideal of $Tilt(U_q(\mathfrak{g}),\Q(q))$, any indecomposable module in $<St>$ has nullity $|R^+|$. 
\end{example}

\begin{example} In the $\mathfrak{sl}_2$-case the only nontrivial thick ideal is the ideal of negligible objects (the $N_1$ case). For $\mathfrak{sl}_3$ we have two nontrivial thick ideals (see \cite{Lusztig-homepage} for pictures of the thick ideals in the rank 2 cases), namely $N_1$ and $<St> = N_3$.
\end{example}


\subsection{The nullity in the modular case}

The category $Tilt(G,k)$ is a symmetric monoidal category. The categorical dimension $\dim(V)$ - also called the $p$-dimension - is the image of the vector space dimension $dim(V)$ under the homomorphism $\Z \to \mathbb{F}_p$, $1_{\Z} \mapsto 1_{\mathbb{F}_p}$.

\begin{proposition} (\cite[Example 1.6, Theorem 3.3]{Parshall-Wang}) For $\lambda \in X^+$
\begin{enumerate}
\item $\dim V(\lambda) \neq 0$ if and only if $\lambda$ is $p$-regular..
\item If $w \in W_p$ satisfies $w \cdot \lambda \in X^+$, then \[ \dim V(w \cdot \lambda)  = (-1)^{l(w)} \dim V (\lambda).\]
\end{enumerate}
\end{proposition}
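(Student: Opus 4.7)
The plan is to derive both statements from the classical Weyl dimension formula
\[ \dim V(\lambda)\ =\ \prod_{\alpha\in R^+}\frac{(\lambda+\rho,\alpha^{\vee})}{(\rho,\alpha^{\vee})}\ \in\ \Z, \]
which is valid over any base (the Weyl module $V(\lambda)$ is obtained by base change from a $\Z$-form, so its rank is the classical Weyl dimension) and to compute its image under $\Z\to k=\mathbb{F}_p$. Set
\[ D(\lambda)\ =\ \prod_{\alpha\in R^+}(\lambda+\rho,\alpha^{\vee}), \]
viewed as a polynomial function on $X\otimes\R$.

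For part (2), I would establish two properties of $D$. First, under the dot action of the finite Weyl group, the set $\{\alpha^{\vee}:\alpha\in R^+\}$ is permuted by $w$ with exactly $\ell(w)$ sign changes, giving $D(w\cdot\lambda)=(-1)^{\ell(w)}D(\lambda)$ for $w\in W$. Second, translation by an element of $pQ$ changes each linear factor by a multiple of $p$, so $D(\lambda+p\mu)\equiv D(\lambda)\pmod p$ for $\mu\in Q$. Writing an arbitrary $w\in W_p$ as $w=w_1\,t_{p\mu}$ with $w_1\in W$ and $\mu\in Q$, and using that translations in $W_p$ are even-length products of affine reflections so that the sign character $(-1)^{\ell(\cdot)}$ of $W_p$ factors through $W_p\twoheadrightarrow W$, one concludes $D(w\cdot\lambda)\equiv (-1)^{\ell(w)}D(\lambda)\pmod p$. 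Dividing by the constant denominator $\prod_{\alpha\in R^+}(\rho,\alpha^{\vee})$, which (under the mild assumption $p\geq h$ so that each height $(\rho,\alpha^{\vee})\leq h-1$ is invertible mod $p$) is a unit in $\mathbb{F}_p$, one obtains $\dim V(w\cdot\lambda)=(-1)^{\ell(w)}\dim V(\lambda)$ in $k$, as required.

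For part (1), the same setup shows that with a $p$-prime denominator, $\dim V(\lambda)\equiv 0\pmod p$ if and only if some numerator factor $(\lambda+\rho,\alpha^{\vee})$ is divisible by $p$, which is exactly the condition that $\lambda$ lies on a reflection hyperplane of $W_p$, i.e.\ that $\lambda$ is $p$-singular. Equivalently, $\dim V(\lambda)\neq 0$ in $k$ iff $\lambda$ is $p$-regular.

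The main obstacle is the behaviour of the sign character on the translation subgroup of $W_p$: the identity $(-1)^{\ell(w)}=(-1)^{\ell(w_1)}$ for $w=w_1t_{p\mu}$ needs to be verified by a length computation showing that every translation in $W_p$ has even length. A secondary, more technical, issue is the denominator: if one wishes to cover small $p$ one must track separately the $p$-adic valuations of numerator and denominator factors, in which case the clean equivalence in (1) should be read (as in Parshall--Wang) under an assumption such as $p\geq h$ making $\prod(\rho,\alpha^{\vee})$ a unit modulo $p$.
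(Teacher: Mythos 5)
The paper does not prove this proposition at all; it simply cites Parshall--Wang, so there is no internal argument to compare against. Your derivation from the Weyl dimension formula is the standard one and is essentially correct. Two remarks on the points you flag. First, the parity issue for translations is not really an obstacle: both $w\mapsto(-1)^{\ell(w)}$ and $w\mapsto\det(\bar w)$ (determinant of the linear part of $w$ under $W_p\twoheadrightarrow W$) are homomorphisms $W_p\to\{\pm1\}$ that agree on the simple affine reflections, hence coincide; since a translation has trivial linear part, it has even length, giving $(-1)^{\ell(w)}=(-1)^{\ell(w_1)}$ for $w=w_1t_{p\mu}$ directly. Second, your caveat about the denominator is genuinely necessary and not merely technical: since $\max_{\alpha\in R^+}(\rho,\alpha^{\vee})=h-1$, the constant $\prod_{\alpha\in R^+}(\rho,\alpha^{\vee})$ is a unit mod $p$ exactly when $p\geq h$, and without such a hypothesis part (1) is literally false --- e.g.\ in type $A_2$ with $p=2$ the weight $\lambda=0$ is $p$-singular (as $(\rho,\theta^{\vee})=2$) yet $\dim V(0)=1\neq 0$ in $\mathbb{F}_2$. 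This is consistent with the standing assumptions in the source and with the paper's own use of $p>h$ in the surrounding sections, so your proof, read under that hypothesis, is complete once the sign-character lemma above is inserted.
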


If $T(\lambda)$ is irreducible, the nullity is already determined by $Tr(id_{T(\lambda)})$, i.e. the $p$-divisibility.

\begin{lemma} If $T(\lambda)$ is irreducible, then $T(\lambda)  \in N_k$ if and only if $p^k | dim T(\lambda)$. 
\end{lemma}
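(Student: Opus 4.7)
The plan is to use irreducibility to reduce the $p^k$-negligibility condition to a single scalar computation. The implication $T(\lambda)\in N_k \Rightarrow p^k\mid\dim T(\lambda)$ is immediate: the endomorphism $a=id_{T_R(\lambda)}$ must satisfy $Tr(a)=\dim T(\lambda)\in(p^k)$, regardless of irreducibility. The content lies in the converse, and the main obstacle is identifying the endomorphism ring $\End_{G_R}(T_R(\lambda))$ with $R$ itself; once that is done, the trace computation is trivial.

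So the first step is to show $\End_{G_R}(T_R(\lambda))=R\cdot id_{T_R(\lambda)}$. Since $T(\lambda)$ is irreducible it coincides with the simple module $L(\lambda)$, whose highest weight space is one-dimensional; any endomorphism is determined by its value there, so $\End_{G_k}(T(\lambda))=k$. Combining this with the isomorphism $\End_{G_R}(T_R(\lambda))\otimes_R R/\mathfrak{m}\cong\End_{G_k}(T(\lambda))$ recalled earlier in the excerpt (a consequence of the $\mathrm{Ext}^1$-vanishing between tilting modules over $R$), the ring $E:=\End_{G_R}(T_R(\lambda))$ satisfies $E/pE\cong k$. Now $R\cdot id\subset E$ and $E$ sits inside the finitely generated $R$-module $\End_R(T_R(\lambda))$, so the quotient $E/(R\cdot id)$ is a finitely generated $R$-module equal to its own product with $\mathfrak{m}$; Nakayama's lemma then forces $E=R\cdot id$.

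With this scalarity in hand, every $a\in\End_{G_R}(T_R(\lambda))$ has the form $r\cdot id$ with $r\in R$, so $Tr(a)=r\cdot\mathrm{rank}_R T_R(\lambda)=r\cdot\dim_k T(\lambda)$. Hence $T_R(\lambda)$ is $p^k$-negligible if and only if $\dim_k T(\lambda)\in(p^k)$, and passing through the mod-$\mathfrak{m}$ evaluation delivers the equivalence $T(\lambda)\in N_k \iff p^k\mid\dim T(\lambda)$.
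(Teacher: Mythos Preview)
Your proof is correct and follows the same idea as the paper: when $T(\lambda)$ is irreducible, the endomorphism ring of its lift $T_R(\lambda)$ reduces to scalars, so the $k$-negligibility condition is entirely governed by $Tr(id)=\dim T(\lambda)$. The paper states this in one sentence (``the nullity is already determined by $Tr(id_{T(\lambda)})$'') without further argument; you have supplied the details, in particular the Nakayama step showing $\End_{G_R}(T_R(\lambda))=R\cdot id$ from the base-change isomorphism $\End_{G_R}(T_R(\lambda))\otimes_R k\cong\End_{G_k}(T(\lambda))=k$.
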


If $M$ is a module of $G$, we denote by $M^{[r]}$ the $G$-module which coincides with $M$ 
as a vector space on which the $G$-action is given by 
\[ g.m\ =\ F^r(g)m, \hskip 3em m\in M,\quad g\in G,\]
where the action on the right hand side is the original action of $G$ on $M$, and
$F$ is the Frobenius map, see \cite{Jantzen} for more details.

\begin{lemma}\label{dimmod}  If $T(\lambda)=V(\la)$ is a simple module of the algebraic group $G$
in characteristic $p$, then  the tilting module $T(\lar)$ is simple as well, 
where $\lar=p^r \lambda + (p^r - 1)\rho$.
\end{lemma}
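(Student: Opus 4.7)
The plan is to combine Steinberg's tensor product theorem with a clean Weyl-character shift identity, reducing the lemma to a dimension count and then invoking the uniqueness of indecomposable tilting modules.

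First I would observe that the hypothesis $T(\lambda) = V(\lambda)$ collapses to $V(\lambda) = L(\lambda) = H^0(\lambda) = T(\lambda)$. Indeed a good filtration of $V(\lambda)$ must have $H^0(\lambda)$ as its top quotient (since $\lambda$ is the unique maximal weight appearing), but the canonical map $V(\lambda)\to H^0(\lambda)$ always factors through $L(\lambda)$, so this top quotient is a quotient of $L(\lambda)$; combined with $\dim V(\lambda)=\dim H^0(\lambda)$ this forces all three modules to coincide.

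Next, Steinberg's tensor product theorem applied to the decomposition $\lambda^{(r)} = (p^r-1)\rho + p^r\lambda$, in which $(p^r-1)\rho$ is $p^r$-restricted, yields
\[
L(\lambda^{(r)}) \;\cong\; L((p^r-1)\rho)\otimes L(\lambda)^{[r]} \;=\; St_r\otimes L(\lambda)^{[r]},
\]
so $\dim L(\lambda^{(r)}) = \dim St_r \cdot \dim V(\lambda)$. On the Weyl-module side the shift identity $\lambda^{(r)}+\rho = p^r(\lambda+\rho)$ plugged into the Weyl character formula, and compared with the corresponding expression for $St_r$ whose numerator is $\sum_w (-1)^{\ell(w)} e^{p^r w\rho}$, gives
\[
\mathrm{ch}\, V(\lambda^{(r)}) \;=\; \bigl(\mathrm{ch}\, V(\lambda)\bigr)^{[r]}\cdot \mathrm{ch}\, St_r,
\]
whence $\dim V(\lambda^{(r)}) = \dim V(\lambda)\cdot \dim St_r = \dim L(\lambda^{(r)})$.

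Since $L(\lambda^{(r)})$ is a quotient of $V(\lambda^{(r)})$ of the same finite dimension we conclude $V(\lambda^{(r)}) = L(\lambda^{(r)})$; the same dimensional argument applied to the socle inclusion $L(\lambda^{(r)})\hookrightarrow H^0(\lambda^{(r)})$ gives $H^0(\lambda^{(r)}) = L(\lambda^{(r)})$. Consequently $L(\lambda^{(r)})$ carries both a Weyl and a good filtration of length one, so it is a tilting module, and being indecomposable of highest weight $\lambda^{(r)}$ it is identified with $T(\lambda^{(r)})$ by Proposition \ref{prop:tilting-ind}. The only step of any substance is the character identity, but it falls out cleanly from the shift $\lambda^{(r)}+\rho = p^r(\lambda+\rho)$; the rest is dimension bookkeeping plus Steinberg's theorem.
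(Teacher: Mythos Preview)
Your proof is correct and follows essentially the same route as the paper's: Steinberg's tensor product theorem gives $\dim L(\lambda^{(r)}) = \dim St_r\cdot\dim L(\lambda)$, the Weyl dimension formula (via your shift $\lambda^{(r)}+\rho = p^r(\lambda+\rho)$) gives $\dim V(\lambda^{(r)}) = p^{r|R^+|}\dim V(\lambda)$, and equality of these dimensions forces $V(\lambda^{(r)})=L(\lambda^{(r)})=T(\lambda^{(r)})$. The only cosmetic difference is that you establish the full character identity $\mathrm{ch}\,V(\lambda^{(r)}) = (\mathrm{ch}\,V(\lambda))^{[r]}\cdot\mathrm{ch}\,St_r$, whereas the paper invokes only the dimension formula; your version is a bit more than is needed but perfectly valid.
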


$Proof.$  If $T(\lambda)$ is simple, it coincides with the Weyl module $V(\lambda)$ and the simple module $L(\lambda)$. A special case of Steinberg's tensor product theorem \cite[Proposition 3.16]{Jantzen} implies that

\[ L((p^r - 1)\rho) \otimes L(\lambda)^{[r]} \cong L((p^r - 1)\rho + p^r \lambda), \]
which also implies isomorphisms between the corresponding Weyl and tilting modules.
As the dimension of the Steinberg module $St_r=L((p^r - 1)\rho )=V((p^r - 1)\rho )$ is equal
to $p^{r|R^+|}$ (see e.g. the inductive formula in \cite{Jantzen}, Section 11.5), 
it follows  from the Weyl dimension formula that then also 
\[ \dim L((p^r - 1)\rho + p^r \lambda)=p^{r|R^+|}\dim L(\la)=\dim V((p^r - 1)\rho + p^r \lambda), \]
as $L(\la)=V(\la)$ by assumption. The claim follows from this, as 
$\dim  L(\lar)
= \dim  V(\lar)$.

\begin{corollary}
(a)  If $T(\la)=V(\la)$  is a simple tilting module, its nullity is equal to the number
$n_p(\la)=\sum_{\al>0} k_\al$, where the summation goes over all positive roots,
and $k_\al$ is the largest integer such that  $p^{k_\al} |(\la+\rho,\al)$.

(b) If $T(\la)=V(\la)$ is simple and has nullity $n(\la)$, then the nullity $n(\lar)$  of $T(\lar)$ is
equal to $n(\la)+r|R^+|$.
\end{corollary}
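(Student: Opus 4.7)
The plan is to reduce both parts to Weyl's dimension formula combined with the earlier characterization that the nullity of an irreducible tilting module equals the $p$-adic valuation of its dimension.

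For part (a), I would begin by lifting to $W(k)$: since $T(\lambda)=V(\lambda)$ is simple, its lift $T_{W(k)}(\lambda)$ is a free $W(k)$-module whose rank equals $\dim V(\lambda)$, so $Tr(id_{T_{W(k)}(\lambda)})=\dim V(\lambda)\in\mathbb{Z}\subset W(k)$, and the preceding lemma identifies the nullity of $T(\lambda)$ with the $p$-adic valuation $v_p(\dim V(\lambda))$. Applying Weyl's dimension formula
\[\dim V(\lambda)=\prod_{\alpha\in R^+}\frac{(\lambda+\rho,\alpha)}{(\rho,\alpha)},\]
it remains to check that each denominator factor $(\rho,\alpha)$ is coprime to $p$; this is the main subtlety. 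Here one uses that $(\rho,\alpha^{\vee})$ is a positive integer bounded by $h-1$ (with $h$ the Coxeter number), so $(\rho,\alpha)=\tfrac{(\alpha,\alpha)}{2}(\rho,\alpha^{\vee})$ is bounded by a constant depending only on the root system and hence coprime to $p$ under the standing assumption $p>h$. Granting this, $v_p(\dim V(\lambda))=\sum_{\alpha\in R^+}v_p\bigl((\lambda+\rho,\alpha)\bigr)=\sum_{\alpha\in R^+}k_{\alpha}=n_p(\lambda)$, which proves (a).

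For part (b), I would invoke Lemma \ref{dimmod}, which already asserts that $T(\lambda^{(r)})$ is simple and that $\dim V(\lambda^{(r)})=p^{r|R^+|}\dim V(\lambda)$. Taking $p$-adic valuations yields $v_p(\dim V(\lambda^{(r)}))=r|R^+|+v_p(\dim V(\lambda))$, and combining with part (a) applied to both $\lambda$ and $\lambda^{(r)}$ immediately gives $n(\lambda^{(r)})=n(\lambda)+r|R^+|$.

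The main obstacle is the denominator analysis in part (a): once one rules out $p\mid(\rho,\alpha)$ for any positive root $\alpha$, the computation is routine, and part (b) becomes a one-line consequence of Lemma \ref{dimmod} combined with part (a).
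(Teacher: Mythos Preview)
Your proposal is correct and matches the paper's (implicit) argument: the paper states this corollary without proof, treating it as an immediate consequence of the preceding lemma (nullity of an irreducible tilting module equals the $p$-adic valuation of its dimension), Weyl's dimension formula, and Lemma~\ref{dimmod}; you have simply written out these steps. One small sharpening: for the denominator analysis it is cleaner to note that every prime factor of $(\rho,\alpha)$ is at most $h-1$ (via $(\rho,\alpha^\vee)\le h-1$ and $(\alpha,\alpha)/2\in\{1,2,3\}$), rather than bounding $(\rho,\alpha)$ itself, and for part~(b) you could alternatively observe directly that $\lambda^{(r)}+\rho=p^r(\lambda+\rho)$, so each $k_\alpha$ increases by exactly $r$.
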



Clearly $T(\lambda) \in N_k$ always implies $p^k | \dim(T(\lambda))$. See section \ref{sec-modularA1} section \ref{sec:p-div} for examples that the converse doesn't hold.

\subsubsection{The Steinberg modules in the modular case} \label{steinberg-cells} 

Recall that we fixed a maximal torus $T$ and a Borel subgroup $T \subset B \subset G$. We denote by $F$ the Frobenius morphism. The notation $G_rT$ denotes the scheme $(F^r)^{-1}(T)$ and $G_r = ker(F^r)$.

The Steinberg module $St_r = L((p^r-1)\rho)$ \cite[II.3.18]{Jantzen} in $Rep(G)$ is both injective and projective as a $G_rT$ and $G_r$-module \cite[Proposition II.10.2]{Jantzen}. For the dimension of $St_r$ we obtain $\dim(St_r) = p^{rd}$, where $d$ is the number of positive roots. We consider for any $r \in \mathbb{N}$ the composition $\psi_r$ of monoidal functors \[ \xymatrix{Tilt(G,k) \ar[r] & Rep(G) \ar[r]^{res} & Rep(G_r T) \ar[r] & \overline{Rep(G_r T)} } \] where  $res: Rep(G) \to Rep(G_r T)$ denotes the restriction functor and $\overline{Rep(G_r T)}$ is the stable category of $Rep(G_r T)$. We denote the thick ideals that form the kernel of the functors $\psi_r$ by $I_r$ and the tensor ideals by $\mathcal{I}_r$. Clearly this gives an descending chain of thick ideals \[  I_1 \supseteq I_2 \supseteq I_3 \supseteq \ldots \] By \cite[Proposition 5.2.3]{Coulembier} and its proof the thick ideal $I_r$ is generated by $St_r$. The key ingredient is the following lemma of Jantzen \cite[Lemma II.E.8]{Jantzen}:


\begin{lemma} Let $\lambda \in X(T)^+$ and $r \in \mathbb{N}_{>0}$. Then $T(\lambda)$ is projective as a $G_r T$-module if and only if $<\lambda,\alpha^{\vee}> \geq p^r - 1$ for all simple roots $\alpha$.
\end{lemma}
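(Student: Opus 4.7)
The plan is to split the biconditional along the parametrization $\lambda = \mu + (p^r-1)\rho$, which is valid precisely under the stated hypothesis since $\langle \rho, \alpha^{\vee}\rangle = 1$ for every simple root $\alpha$. The central player will be the Steinberg module $St_r = L((p^r-1)\rho)$, which is simultaneously simple, Weyl, induced and tilting (so $St_r = V((p^r-1)\rho) = H^0((p^r-1)\rho) = T((p^r-1)\rho)$), and which is known to be both projective and injective as a $G_rT$-module by \cite[Proposition II.10.2]{Jantzen}, as already invoked above.

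For the ``if'' direction, assume $\mu := \lambda - (p^r-1)\rho \in X(T)^+$ and form the tensor product $T(\mu) \otimes St_r$. By the closure of tilting modules under tensor products established in the previous section, this is a tilting $G$-module. Its highest weight is $\mu + (p^r-1)\rho = \lambda$ with one-dimensional weight space, so $T(\lambda)$ appears as an indecomposable direct summand exactly once and no summand $T(\nu)$ with $\nu \not\leq \lambda$ can occur. On the $G_rT$ side, $St_r$ is $G_rT$-projective, and the standard fact that tensoring any module with a $G_rT$-projective module yields a $G_rT$-projective module (coming from the Frobenius-Hopf-algebra structure of $\mathrm{Dist}(G_r)$ compatibly extended by $T$) shows that $T(\mu) \otimes St_r$ is $G_rT$-projective; hence its direct summand $T(\lambda)$ is $G_rT$-projective as well.

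For the ``only if'' direction I would invoke the classification of indecomposable $G_rT$-projectives $\hat{Q}_r(\nu)$, $\nu \in X(T)$, as projective covers of the simple $G_rT$-modules $\hat{L}_r(\nu)$. Each $\hat{Q}_r(\nu)$ has a unique highest weight, and that weight is known (via Humphreys--Verma reciprocity and the injective/projective coincidence for $G_rT$) to satisfy $\langle \mathrm{hw}(\hat{Q}_r(\nu)), \alpha^{\vee}\rangle \geq p^r - 1$ for every simple root $\alpha$. If $T(\lambda)$ is $G_rT$-projective it decomposes into a direct sum of $\hat{Q}_r(\nu)$'s; since $\dim T(\lambda)_\lambda = 1$, the weight $\lambda$ must itself be the highest weight of one of these summands, so the bound on $\mathrm{hw}(\hat{Q}_r(\nu))$ transfers to $\lambda$.

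The main obstacle is this last step: turning the structural statement ``$\hat{Q}_r(\nu)$ is the injective--projective cover of $\hat{L}_r(\nu)$'' into the explicit highest-weight estimate $\langle \mathrm{hw}(\hat{Q}_r(\nu)), \alpha^{\vee}\rangle \geq p^r - 1$ requires the translation-functor calculus of \cite[Chapter II.9--11]{Jantzen}. To sidestep the full classification one can argue by contraposition with Jantzen translation: if some $\langle \lambda, \alpha^{\vee}\rangle < p^r - 1$, translate $\lambda$ into the lowest $p^r$-alcove where $T(\lambda') = L(\lambda')$, and observe directly that a simple $G_rT$-module $L(\lambda')$ is $G_rT$-projective only when $\lambda' \in (p^r-1)\rho + p^r X(T)$, contradicting the assumption.
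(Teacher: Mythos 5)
The paper offers no internal proof to compare with: the lemma is quoted verbatim from Jantzen [II, E.8]. Your ``if'' direction is nevertheless correct and is the standard argument: for $\mu=\lambda-(p^r-1)\rho$ dominant, $T(\mu)\otimes St_r$ is tilting with highest weight $\lambda$ and one-dimensional $\lambda$-weight space, hence contains $T(\lambda)$ as a direct summand; it is $G_r$-projective because $St_r$ is and tensoring with a projective module over the finite-dimensional Hopf algebra $\mathrm{Dist}(G_r)$ preserves projectivity, and for a $G_rT$-module projectivity over $G_rT$ is equivalent to projectivity over $G_r$.

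The ``only if'' direction has a genuine gap. The indecomposable projective $G_rT$-modules are the $\hat{Q}_r(\nu_0)\otimes p^r\nu_1$ with $\nu_0$ restricted and $\nu_1\in X(T)$ arbitrary, with unique highest weight $2(p^r-1)\rho+w_0\nu_0+p^r\nu_1$; your key claim that this highest weight pairs $\geq p^r-1$ with every simple coroot is false whenever $\langle\nu_1,\alpha^\vee\rangle<0$ (e.g.\ $\hat{Q}_r(0)\otimes(-p^r\rho)$ has highest weight $(p^r-2)\rho$), and such twisted summands genuinely occur when $G$-modules are restricted to $G_rT$ (e.g.\ $St_r\otimes N^{[r]}$ decomposes into $\hat{Q}_r((p^r-1)\rho)\otimes p^r\mu$ over all weights $\mu$ of $N$, dominant or not). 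So ``$\lambda$ is the highest weight of some projective summand'' transfers nothing; one must exploit that $\lambda$ is the unique maximal weight of the whole $G$-module. A correct repair: a projective $G_rT$-module is free over $\mathrm{Dist}(U_r^-)$, and the maximal weight line $T(\lambda)_\lambda$ generates a free $\mathrm{Dist}(U_r^-)$-summand, so $\lambda-(p^r-1)\alpha$ is a weight of $T(\lambda)$ for every simple $\alpha$; since the weight set of $T(\lambda)$ is $W$-stable and bounded above by $\lambda$, applying $s_\alpha$ to this weight forces $\langle\lambda,\alpha^\vee\rangle\geq p^r-1$. Your fallback via translation functors also fails: translation functors preserve $W_p$-linkage classes and facet data, so they cannot move an arbitrary $\lambda$ ``into the lowest $p^r$-alcove''; moreover for $r>1$ a tilting module with highest weight in the lowest $p^r$-alcove is in general not simple (simplicity holds in the lowest $p$-alcove), so the reduction to the fact that the only simple projective $G_rT$-modules are Steinberg twists is not available along that route.
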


Since $\dim St_r = p^{r |R^+|}$ and $St_r$ is simple, we obtain 

\begin{corollary} The nullity of $St_r$ is $r|R^+|$. In particular $<St_r> \subset N_{r|R^+|}$ .
\end{corollary}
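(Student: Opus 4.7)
The plan is to reduce the statement to the irreducibility criterion for nullity given just above the corollary, namely that an irreducible tilting module $T(\lambda)$ lies in $N_k$ if and only if $p^k\mid \dim T(\lambda)$. The reason this applies is that $St_r = L((p^r-1)\rho)$ is simple by hypothesis, so it coincides with the Weyl module $V((p^r-1)\rho)$ and with the indecomposable tilting module $T((p^r-1)\rho)$; hence the criterion is available for $St_r$.

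Next I would recall (exactly as in the proof of Lemma \ref{dimmod}, via the inductive dimension formula for the Steinberg module in \cite{Jantzen}, Section 11.5) that $\dim St_r = p^{r|R^+|}$. Combined with the previous lemma this shows $St_r \in N_k$ if and only if $k \leq r|R^+|$, so the nullity of $St_r$ is exactly $r|R^+|$. As a sanity check, one can rederive the same number from part (b) of the preceding modular corollary applied to $\lambda = 0$, since the trivial module has nullity $0$ and $0^{(r)} = (p^r-1)\rho$, whence the nullity of $T((p^r-1)\rho) = St_r$ equals $0 + r|R^+|$.

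The second assertion $\langle St_r \rangle \subset N_{r|R^+|}$ is then immediate: $N_{r|R^+|}$ is a thick ideal of $Tilt(G,k)$ containing $St_r$, so it contains the thick ideal generated by $St_r$ by minimality.

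There is no genuine obstacle here; the only point that needs a moment's care is to cite the correct hypothesis, namely the irreducibility of $St_r$, so that the lemma characterizing the nullity of simple tilting modules via $p$-divisibility of the dimension actually applies. Everything else is a direct computation of $\dim St_r$ and formal properties of thick ideals.
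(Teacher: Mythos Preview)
Your proposal is correct and matches the paper's own argument essentially verbatim: the paper simply writes ``Since $\dim St_r = p^{r|R^+|}$ and $St_r$ is simple, we obtain'' and invokes the lemma that for an irreducible tilting module the nullity is given by the $p$-divisibility of its dimension. Your additional sanity check via the $\lambda=0$ case of the corollary on $T(\lambda^{(r)})$ is a nice extra consistency observation but not needed for the proof.
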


\subsubsection{The modular cases $A_1$}\label{sec-modularA1}  For $SL(2)$ the $I_r$ are a complete list of thick ideals \cite[Theorem 5.3.1]{Coulembier}. A tilting module $T(m)$ is in $I_r$ if and only if $m \geq p^r - 1$. In particular $I_1 = N$. For $SL(2)$ we have $\dim(St_r) = p^{r}$. Therefore $I_r$ is the $r$-th negligible ideal, and every thick ideal is $k$-negligible for some $k$.

\medskip\noindent
It follows from Jensen \cite[Lemma 9.6]{Jensen-Phd} that for $p>2$ \[ T(\lambda) \in N_k \text{ if and only if } p^k | \dim T(\lambda) \] where $\dim$ refers to the dimension of $T(\lambda)$ as a vector space. In other words, $N_k$ measures the $p$-divisibility of the dimension of $T(\lambda)$.

\medskip\noindent
It is important to assume $p > 2$ here. Indeed the dimensions of the first tilting modules in the $p = 2$ case are \[ \dim T(0) = 1, \ \dim T(1) = St_1 = 2, \dim T(2) = 4, \ \dim T(3) = St_3 = 4.\]

Although $\dim T(2) = 4$, it is not in $N_2$. Over $\Z_2$ we have $Tr(id_{T(2)}) = 4$, but we can write $T(2) \cong T(1) \otimes T(1)$. Hence there is an endomorphism $f$ of $T(2)$ which permutes the two factors. It is easy to see that $Tr(f) = 2$, hence the trace is not always contained in $(2)^2$ and so $T(2) \notin N_2$.

\subsubsection{$p$-divisibility} \label{sec:p-div} The $SL(2)$-case seems to suggest that indeed we have $T(\lambda) \in N_k$ if and only if $p^k | \dim T(\lambda)$ provided $p > h$ (the Coxeter number). This is false. The following example was communicated to us by Thorge Jensen. For $Sp(4)$ and $p=11$ consider the following tilting modules (weights are expressed via fundamental weights)

\[
\begin{tabular}{|c|c|} $T(\lambda)$ & $\dim$ \\
\hline
$T(1,18)$ & $7018$ \\
$T(1,20)$ & $10648$ \\
$T(0,22)$ & $8954$ \\
$T(9,20)$ & $56870$
\end{tabular}
\]

By computations of Jensen these tilting modules all belong to the same $p$-cell (see Section \ref{sec:combi}), but their $p$-valuation is not constant. All these modules belong to $N_2$ but the dimension of $T(1,20)$ is divisible by $11^3$. Therefore the nullity of a $T(\lambda)$ is not simply given by the $p$-divisibility.


\section{Combinatorics for tilting modules} \label{sec:combi}

By theorem \ref{thm:tilting-eval} the $k$-negligible ideals are defined for modular and quantum tilting modules. Contrary to the case of Deligne categories, not every thick ideal is one of the $N_k$. We would like to understand the negligible ideals $N_k$ for modular and quantum tilting modules. In both cases the thick ideals are governed by the intricated Kazhdan-Lusztig cell theory of the affine Weyl group which is largely not understood in the modular case. In the following we try to give a more direct geometric description of these cells. While the results in the current section are general, we focus on type A in section \ref{sec:a-A}. 

\subsection{Classification of thick ideals} \label{sec:classif}

We first recall the classification of the thick ideals due to Ostrik and Achar-Hardesty-Riche. 

\subsubsection{Ostrik's classification} \label{quantum-cells}

\begin{definition} (\cite{Ostrik-ideals} \cite{Andersen-cells}) For $\lambda,\mu \in X^+$ write $\lambda \leq_q \mu$ if there exists $Q \in  Tilt(U_q(\mathfrak{g}),\Q(q))$ such that $T(\lambda)$ is a summand of $T(\mu) \otimes Q$. If both $\lambda \leq_q \mu$ and $\mu \leq_q \lambda$ write $\lambda \sim_q \mu$. The equivalence classes are called weight cells.
\end{definition}

We remark that for any $\lambda \in X^+$ we have $\lambda + \nu \leq_q \lambda$ for all $\nu \in X^+$ since $T(\lambda + \nu)$ is a summand of $T(\lambda) \otimes T(\nu)$. The fundamental alcove $C_{\ell}$ is the unique maximal weight cell in the $\leq_q$ ordering. For a weight cell $\underline{c}$ we denote by \[ T(\leq_q \underline{c}) \] the subcategory of objects in $Tilt(U_q(\mathfrak{g}),\Q(q))$ whose objects are direct sums of $T(\lambda)$ with $\lambda$ in a cell $\underline{c}'$ satisfying $c' \leq_q c$. Then $T(\leq_q \underline{c})$ is a thick ideal in $Tilt(U_q(\mathfrak{g}),\Q(q))$. 

\medskip\noindent 
The division of $X^+$ into weight cells gives a division of $W_{\ell}^+$. Recall that Lusztig and Xi have defined a partition of $W_{\ell}^+$ into right cells along with a right order $\leq_R$ on the set of right cells.

\begin{theorem} (\cite{Ostrik-ideals}) The weight cells in $X^+$ (and therefore the thick ideals in $Tilt(U_q(\mathfrak{g}),\Q(q))$) correspond to the right cells in $W^+_{\ell}$, i.e. for any right cell $A \in W_{\ell}^+$ the full subcategory $Tilt(U_q(\mathfrak{g}),\Q(q))_{\leq A} \subset Tilt(U_q(\mathfrak{g}),\Q(q))$ of direct sums of tilting modules $T(\lambda)$ for \[\lambda \in \bigcup_{w \in B \leq_R A} \underline{C}_w \] is a thick ideal.
\end{theorem}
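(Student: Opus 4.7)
The plan is to translate the tensor-summand preorder $\leq_q$ on indecomposable tilting modules into the Kazhdan--Lusztig right preorder $\leq_R$ on $W_\ell^+$, via the categorification of the affine Hecke algebra by tilting modules. First, by the strong linkage principle one parametrizes regular dominant weights by the dominant alcoves, i.e.\ by elements $w\in W_\ell^+$ via $\lambda\leftrightarrow w_\lambda$ with $w_\lambda\cdot 0=\lambda$; weights on walls yield negligible tilting modules and can be absorbed into the largest cell (the cell of negligibles) without changing the statement. Under this parametrization the claim becomes a statement about $W_\ell^+$.

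Second, I would invoke Soergel's tilting character formula in the quantum setting (valid under the assumptions on $\ell$ and $\mathfrak g$ in use), which identifies the Grothendieck group $K_0(Tilt(U_q(\mathfrak g),\Q(q)))$, as a right module over itself via tensor product, with a specialization at $v=1$ of a right module over the affine Hecke algebra $H(W_\ell)$. Under this identification the class $[T(w\cdot 0)]$ corresponds (up to sign) to the Kazhdan--Lusztig basis element $C'_w$, and tensoring with $T(\mu)$ corresponds to right multiplication by an integral combination of the $C'_{w_\nu}$. The Krull--Schmidt property of $Tilt$ ensures that summand information is faithfully recorded in $K_0$, so ``$T(\lambda)$ is a direct summand of $T(\mu)\otimes Q$'' becomes ``$C'_{w_\lambda}$ appears with nonzero coefficient in $C'_{w_\mu}\cdot h$ for some $h\in H(W_\ell)$.''

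Third, the last condition is exactly the definition of $w_\lambda\leq_R w_\mu$ in the Kazhdan--Lusztig right preorder. Hence $\leq_q$ on $X^+$ is transported to $\leq_R$ on $W_\ell^+$, and the $\sim_q$-equivalence classes correspond to right cells. Closure under $\leq_R$ on the Hecke side translates to closure under tensoring with arbitrary tilting modules on the categorical side; closure under retracts is automatic because in $Tilt$ idempotents split into indecomposables whose classes belong to the same union of right cells. Thus the subcategory $Tilt(U_q(\mathfrak g),\Q(q))_{\leq A}$ is a thick ideal, and every thick ideal arises in this way from some $\leq_R$-downward closed union of right cells.

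The main obstacle is the Soergel-type identification step: one needs that the tensor product multiplicities $[T(\mu)\otimes T(\nu):T(\lambda)]$ are controlled by evaluations of Kazhdan--Lusztig polynomials at $v=1$, and that this control is compatible with the tensor action. This is the quantum analogue of Soergel's theorem and relies on translation functors, the uniqueness of indecomposable tilting modules, and the positivity of Kazhdan--Lusztig polynomials for the affine Weyl group. Once this is in place, the passage from preorders to cells is essentially a translation of definitions.
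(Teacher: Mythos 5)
The paper itself gives no proof of this statement: it is quoted from Ostrik \cite{Ostrik-ideals} (with the cell combinatorics of Lusztig--Xi), so the only meaningful comparison is with Ostrik's argument, and your plan does follow that route in outline --- Soergel-type character formulas for quantum tilting modules, transport of the tensor-summand preorder $\leq_q$ to Kazhdan--Lusztig combinatorics, and positivity. At that level the strategy is the right one, but two steps as you state them would fail.

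First, the claim that weights on walls ``yield negligible tilting modules and can be absorbed into the largest cell (the cell of negligibles)'' is wrong on both counts: the unique maximal weight cell for $\leq_q$ is the fundamental alcove $C_\ell$, whose tilting modules are precisely the non-negligible ones, and the singular weights do not lie in (or join) a single cell --- the Steinberg weight $(\ell-1)\rho$ is singular yet generates the smallest nonzero thick ideal, while a weight on one wall of $C_\ell$ lies in a much larger one. Singular weights are part of the statement (they enter through the lower closures $\underline{C}_w$) and must be attached to the correct alcove, which is done with translation functors onto and off walls; they cannot be discarded. Second, the Hecke-theoretic identification must be with the antispherical (sign-induced) right module over the affine Hecke algebra, whose Kazhdan--Lusztig basis is indexed by $W_\ell^+$: it is there, by Soergel's theorem, that $[T(w\cdot 0)]$ matches a canonical basis element, and the cells in the theorem are the Lusztig--Xi right cells of $W_\ell^+$, not the right cells of $W_\ell$ with its regular module and the elements $C'_w$ as you wrote. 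Finally, the passage between the preorders needs both directions made explicit: positivity of the structure constants lets a nonzero multiplicity at $v=1$ (a direct summand, by Krull--Schmidt) force a nonzero coefficient at generic $v$, hence a $\leq_R$ relation; conversely, to produce the tilting module $Q$ required by $\leq_q$ from a relation $w_\lambda\leq_R w_\mu$ you must realize right multiplication by the elements $C'_s$ (wall-crossing) as tensoring with explicit (tilting) modules up to direct summands. With these repairs your outline becomes essentially Ostrik's proof; as written, the singular-weight step and the regular-module identification are genuine gaps.
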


By \cite[Remark 5.6]{Ostrik-ideals} every thick ideal is a sum of ideals of the form $Tilt(U_q(\mathfrak{g}),\Q(q))_{\leq A}$ for a right cell $A$ so that this theorem yields the classification of thick ideals in $Tilt(U_q(\mathfrak{g}),\Q(q))$. 


\subsubsection{Thick ideals in the modular case} \label{sec:modular-cells}

The notion of a weight cell can be defined in complete analogy to the quantum case in section \ref{quantum-cells}. We denote the modular analog of the preorder and the equivalence classes by $ \leq_T$ and $\sim_T$. An equivalence class is called a {\it modular weight cell} and $C_p$ is the largest modular weight cell. Contrary to the quantum case there are infinitely many modular weight cells (see section \ref{steinberg-cells}). Any modular weight cell $\underline{c}$ defines a thick ideal \[ T(\leq_T c).\] Ostrik's classification carries over to the modular case if we replace right cells by right $p$-cells (also called anti-spherical right $p$-cells).

\begin{theorem} (\cite[Theorem 7.7, Corollary 7.8]{Achar-Hardesty-Riche}) The modular weight cells in $X^+$ (and therefore the thick ideals in $Tilt(G,k)$) correspond to the right $p$-cells in $W^+_p$, i.e. for any right $p$-cell $A \in W_p^+$ the full subcategory $Tilt(G,k)_{\leq A} \subset Tilt(G,k)$ of direct sums of tilting modules $T(\lambda)$ for \[\lambda \in \bigcup_{w \in B \leq_{R,p} A} \underline{C}_w \] is a thick ideal.
\end{theorem}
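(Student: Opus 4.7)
The plan is to deduce this modular analogue of Ostrik's theorem from the anti-spherical $p$-canonical categorification of tilting modules developed by Riche--Williamson. The essential input is their tilting character formula together with its diagrammatic refinement: an equivalence that identifies the additive category $Tilt(G,k)$, suitably set up, with a categorification of the anti-spherical module $\mathcal{M}_p$ for the affine Hecke category over $k$. Under this identification the indecomposable $T(\la)$ correspond to the $p$-canonical basis elements ${}^p b_w$ indexed by elements $w \in W_p^+$ with $w\cdot 0 = \la$, and tensoring with a fundamental tilting module $T(\varpi_i)$ corresponds to the right action of an elementary (Bott--Samelson type) element of the affine Hecke algebra.

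First I would reformulate the preorder: $\mu \leq_T \la$ iff $T(\mu)$ is a direct summand of $T(\la)\otimes V$ for some $V\in Tilt(G,k)$. Since every tilting module is a summand of an iterated tensor product of the fundamental tiltings $T(\varpi_i)$, the preorder is generated by the instances with $V = T(\varpi_i)$. Through the Riche--Williamson identification these elementary relations translate into the defining relations of the right $p$-cell preorder $\leq_{R,p}$ on $W_p^+$, namely $w' \leq_{R,p} w$ iff ${}^p b_{w'}$ occurs with nonzero coefficient in some product ${}^p b_w\cdot h$ for $h$ in the affine Hecke algebra. This gives an inclusion of preorders in one direction.

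Next I would verify the reverse inclusion: every generating relation of $\leq_{R,p}$ must be realisable by a tensor product with a tilting module. This amounts to showing that the right Hecke action on $\mathcal{M}_p$ is exhausted, at the level of structure constants, by the actions of translation functors on $Tilt(G,k)$, and is supplied by the Riche--Williamson description of translation functors combined with the lifting arguments of Achar--Hardesty--Riche. The two inclusions together yield the claimed bijection between modular weight cells in $X^+$ and right $p$-cells in $W_p^+$. That $Tilt(G,k)_{\leq A}$ is a thick ideal---closure under $\otimes$ with arbitrary tilting modules and under retracts---is then automatic from the definition of the weight cells.

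The principal obstacle is precisely this categorification step: the definition of the weight cell $\leq_T$ involves actual direct summands rather than mere character identities on the split Grothendieck group, so one cannot work solely with the $p$-canonical character formula. One must invoke the full diagrammatic anti-spherical equivalence, which upgrades characters to indecomposable summands of tensor products, and this is where the heavy Elias--Williamson / Riche--Williamson machinery enters. The secondary technical bookkeeping---matching $\leq_T$ on $X^+$ with $\leq_{R,p}$ on $W_p^+$ through the length-preserving bijection between dominant weights and minimal-length coset representatives---must be handled carefully to avoid sign or alcove-shift errors, but is routine once the categorified picture is in place.
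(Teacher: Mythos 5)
The paper does not prove this statement at all: it is quoted verbatim from Achar--Hardesty--Riche \cite[Theorem 7.7, Corollary 7.8]{Achar-Hardesty-Riche} (just as the preceding quantum statement is quoted from Ostrik), so there is no internal argument to compare yours against. Your sketch does follow the general strategy of the cited literature (the diagrammatic Hecke-category action on $Rep(G)$ and the antispherical $p$-canonical basis), but as a proof it has a genuine gap precisely at its pivot point. You assert that, under the Riche--Williamson identification, tensoring with a fundamental tilting module $T(\varpi_i)$ ``corresponds to the right action of an elementary (Bott--Samelson type) element of the affine Hecke algebra.'' This is not what the categorification gives you: the Hecke category acts on the principal block by wall-crossing/translation functors, not by tensor product with tilting modules. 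The weight-cell preorder $\leq_T$ is generated by tensoring with arbitrary tilting modules (equivalently, with the $T(\varpi_i)$), while the right $p$-cell preorder $\leq_{R,p}$ is generated by right multiplication by the Kazhdan--Lusztig/$p$-canonical generators, which categorify wall-crossings. Comparing these two preorders --- i.e.\ showing that a tensor product with $T(\varpi_i)$, after projecting to blocks and translating back to the principal block, only produces summands allowed by $\leq_{R,p}$, and conversely that every elementary $p$-cell relation is realized by a summand of such a tensor product via translation functors being summands of tensor functors --- is exactly the nontrivial content of the theorem. Your ``first inclusion'' assumes this translation dictionary, and your ``reverse inclusion'' is dismissed with ``is supplied by the Riche--Williamson description of translation functors combined with the lifting arguments of Achar--Hardesty--Riche,'' which is to say the hard step is outsourced rather than proved.

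Two further points need care if you want to turn the sketch into an argument. First, the identification of the indecomposable $T(\lambda)$ with the antispherical $p$-canonical basis elements (the Riche--Williamson character formula) is itself a deep input with hypotheses on $p$ (or requires the Koszul-duality proof of Achar--Makisumi--Riche--Williamson); since the statement you are proving is attributed to \cite{Achar-Hardesty-Riche} under their hypotheses, you should state exactly which form of that input you invoke and for which $p$. Second, the bijection $\lambda \leftrightarrow w_\lambda$ between dominant weights and elements of $W_p^+$ via $\lambda \in \underline{C}_{w}$ must be shown to intertwine the two preorders on the nose, not just on split Grothendieck groups; since $\leq_T$ is defined through actual direct summands, you correctly flag that character-level identities are insufficient, but the proposal never closes this loop --- it is the same summand-versus-character issue reappearing in the comparison of the two generating sets of relations.
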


As in the quantum case, every thick ideal is a sum of ideals attached to right $p$-cells.

\begin{example}
By \cite{Andersen-cells} the set \[ \underline{c}_1^r = (p^r-1)\rho + X_{p^r} + p^rC_p \] is a weight cell which contains $St_r$. We call this weight cell the $r$-th Steinberg cell. We have an equality of thick ideals $<St_r> = T(\leq \underline{c}_1^r)$.
\end{example}

\begin{remark} Since every thick ideal in the quantum and modular case is a sum of thick ideals attached to right cells, the nullity of a tilting module is constant on an alcove. Indeed, if $T(\lambda)$ is a tilting module in a thick ideal $I$ and $T(\lambda)$ is contained in an alcove $A$, the entire alcove is contained in $I$ since the ideal is a union of weight cells.
\end{remark}


\subsection{Affine Weyl groups}  
\def\ep{\epsilon}

We review some basic combinatorics in connection with affine Weyl groups and tilting modules.
See \cite{Jantzen}, \cite{JLink} , \cite{ALink}, \cite{Soergel1}, \cite{Soergel2} and the literature
quoted in these papers for more details. The use of facets for Kazhdan-Lusztig
combinatorics already appeared before e.g. in  \cite{Goodman-Wenzl}.

Let $X_n  $ be a finite root system, and let $X_n^{(1)}$
be the root system of the corresponding untwisted affine Weyl group. It has a faithful
representation in $\hstar$, where the generating reflections for $X_n$
act as usual, and the  additional generator acts via the reflection in the hyperplane
given by $(\theta, \gamma)=\ell$, where $\ell$ is a positive integer,
 $\theta$ is the long resp short root of greatest length
if $d|\ell$ resp $d\nmid \ell$; here $d$ is the ratio of the square length of a
long and a short root.

We obtain a system of hyperplanes on $\hstar$ from the orbits of the generating
hyperplanes under the affine Weyl group. They can be described
explicitly by 
$$H_{\alpha,k} \ =\ \{ x\in\hstar \ | \ (x,\alpha)=k\ell\},\quad \alpha\in \Delta_+, k\in\Z,$$
if $d|\ell$. If $d\nmid \ell$, we just replace the roots $\alpha$ by coroots in the definition
above.  The positive and negative
sides of these hyperplanes are defined in the obvious way, replacing the equal
sign in the definition by inequality signs.
These hyperplanes make $\hstar$ into a cell complex
as follows: We call an intersection of $k$ hyperplanes maximal if it has dimension
$n-k$, and we denote by $\h^*(n-k)$ the union of all maximal
intersections of $k$ hyperplanes. 
The set of $j$-cells then is given by all connected components of
$\h^*(j)\backslash \h^*(j-1)$, with $\h^*(-1)$ being the empty set.


\subsection{Alcoves and facets} 
As usual, we call the $n$-cells $alcoves$, and lower-dimensional cells $facets$.
The $(n-1)$-cells which are in the closure of a given alcove $A$ are called
the $walls$ of $A$.
The fundamental alcove $C_{\ell}$ is defined to be the unique alcove in the
dominant Weyl chamber whose closure contains the origin 0. We say that
a wall of $C_{\ell}$ corresponds to the simple reflection $s_i$ if it is fixed by it.
This defines a 1-1
correspondence between the walls of $C_{\ell}$ and the simple reflections.
We can now define a 1-1 correspondence between the alcoves 
and the elements $w$ of the affine Weyl group as well as a labeling of the walls via
simple reflections by induction on the length of $w$ as follows: 
The element 1 corresponds to $C_{\ell}$. If the alcove $A$ corresponds to
the element $w$, and $s_i$ is a simple reflection such that $ws_i$ has greater length
than $w$, then the alcove $A'=As_i$ corresponding to $ws_i$ is obtained by reflecting
$A$ in the wall labeled by $s_i$. This reflection also defines the labeling of the walls
of $A'$. Moreover, the action of the $s_i$ defines a right action of the affine Weyl
group on the alcoves.
The alcoves in the dominant Weyl chamber are in 1-1 correspondence with the
shortest elements of the cosets of the finite Weyl group in the
affine Weyl group. The Bruhat order then has the geometric interpretation
that 
$u<w$ is equivalent to the fact that whenever $u(C_{\ell})$ and $v(C_{\ell})$
lie on opposite sides of a hyperplane, $u(C_{\ell})$ must be
on the negative and $w(C_{\ell})$ must be on the positive side of that 
hyperplane. We similarly define for two facets $F_1$ and $F_2$ that $F_1<F_2$
if $F_2$ lies in or on the positive side of any hyperplane which contains $F_1$.

For a given facet $F$, the stabiliser group $W(F)$ is the group generated by the
reflections in the hyperplanes which contain $F$. We denote by $C_{\ell}(F)$ the
unique alcove whose closure contains $F$, and which is on the positive side of every hyperplane 
which contains $F$. The set $\Delta(F)$ denotes the positive roots  corresponding to the
hyperplanes which contain $F$ and a wall of $C_{\ell}(F)$. By definition, $C_{\ell}(F)$
is on the positive side of each of these hyperplanes. We call the reflections
corresponding to the roots in  $\Delta(F)$ the simple reflections of $W(F)$,
and the roots in $\Delta_F$ the simple roots of $W(F)$.
We also define the positive cone $C_+(F)$ to be the region
which is above all hyperplanes corresponding to the roots in $\Delta(F)$.


\subsection{Tilting modules and linkage}
If the context is not specified, the statement holds for
tilting modules of quantum groups at roots of unity as well as for
tilting modules of algebraic groups in characteristic $p$. Let $T(\la)$ be the unique indecomposable tilting module
up to isomorphism whose highest weight is $\lambda$. We will use the well-known
fact that if the Weyl module $V(\lambda)$ is simple,
it coincides with $T(\lambda)$ and with the simple module $L(\lambda)$ of highest weight 
$\lambda$.

\begin{theorem}\label{linkagetheorem} (Linkage Principle)
The Weyl module $V(\mu)$
appears in a filtration of the tilting module $T(\la)$ only if
$\mu$ is in the orbit of $\la$ under the affine Weyl group
and $\mu\leq \la$ in Bruhat order.
\end{theorem}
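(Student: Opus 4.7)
The strategy is to reduce the theorem to the classical (strong) linkage principle for the ambient representation category, which is available in both the quantum and modular settings. In the quantum case this is the block decomposition of $\mathrm{Rep}(U_q(\mathfrak{g}))$ established by Andersen--Polo--Wen \cite{Andersen-Polo-Wen}; in the modular case it is Jantzen's linkage principle (see \cite{Jantzen}, II.6). Both results assert that the block of the category containing the simple module $L(\lambda)$ is indexed precisely by the orbit $W_\ell\cdot \lambda$ (respectively $W_p\cdot \lambda$) of $\lambda$ under the dot action of the affine Weyl group.

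The first step is to note that $T(\lambda)$, being indecomposable, lies entirely inside a single block. By Proposition \ref{prop:tilting-ind} the weight $\lambda$ occurs with multiplicity one in $T(\lambda)$ and is maximal, so $V(\lambda)$ is the top step of a suitably ordered Weyl filtration and $L(\lambda)$ appears as a composition factor. Hence the block in question is the one indexed by $W_\ell\cdot\lambda$. If $V(\mu)$ occurs as a subquotient of any Weyl filtration of $T(\lambda)$, then $L(\mu)$ is a composition factor of $T(\lambda)$ and thus must also lie in this block, forcing $\mu\in W_\ell\cdot\lambda$. This gives the orbit statement.

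For the Bruhat-order bound, I would combine two ingredients. First, by the highest weight property of $T(\lambda)$ recalled in Proposition \ref{prop:tilting-ind}, any weight $\mu$ with $T(\lambda)_\mu\neq 0$ satisfies $\mu\leq\lambda$ in the usual dominance order; in particular this holds for the highest weight $\mu$ of any Weyl subquotient $V(\mu)$. Second, once $\mu$ and $\lambda$ are both dominant and lie in the same affine Weyl group orbit, the inequality $\mu\leq\lambda$ in dominance order is equivalent to $\mu\leq\lambda$ in the Bruhat order on alcoves. This equivalence is precisely the geometric description of Bruhat order recalled just before the theorem: going from $C_\ell(\mu+\rho)$ to $C_\ell(\lambda+\rho)$ by reflections across hyperplanes $H_{\alpha,k}$, each reflection that moves to the positive side of a separating hyperplane simultaneously raises the weight in the dominance order. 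Thus $\mu\leq \lambda$ in Bruhat order follows.

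The main obstacle is clearly the linkage principle itself, particularly in the modular case where its proof rests on Kempf vanishing, translation functors and a careful analysis of $\mathrm{Ext}^1$-vanishing between Weyl and induced modules; in the quantum case one uses the quantized analogues developed in \cite{Andersen-Polo-Wen}. Both are well-established and can be invoked as black boxes. Once the block decomposition is available, the argument above is essentially formal, and no additional work is needed for the Bruhat-order assertion beyond the standard geometric dictionary between alcoves and affine Weyl group elements already recorded in the preceding subsection.
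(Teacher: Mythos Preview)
The paper does not actually prove this theorem: it is stated as a well-known result, with the references \cite{Jantzen}, \cite{JLink}, \cite{ALink}, \cite{Soergel1}, \cite{Soergel2} given at the start of the subsection on affine Weyl groups serving as the justification. Your proposal is therefore entirely in line with the paper's own treatment: reduce to the classical (strong) linkage principle in the ambient category, available from Jantzen in the modular case and from Andersen (and Andersen--Polo--Wen) in the quantum case, and read off both conclusions.

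One small caution on your second step. The inference ``$\mu\leq\lambda$ in dominance order and $\mu\in W_\ell\cdot\lambda$ dominant $\Rightarrow$ $\mu\leq\lambda$ in Bruhat order'' is true in the sense the paper uses (the geometric alcove order recalled just before the theorem), but it is not a tautology; it is precisely the content of the \emph{strong} linkage principle rather than the block decomposition alone. Blocks only give you the orbit condition. To get the Bruhat inequality you really need the strong form $\mu\uparrow\lambda$ (Jantzen \cite{JLink}, \cite[II.6.13]{Jantzen}; Andersen \cite{ALink} in the quantum case), which is a chain of affine reflections each moving towards the positive side of the relevant hyperplane, and which then translates into the alcove/Bruhat inequality via the geometric dictionary you invoke. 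So your outline is correct, but the black box you must cite for the ordering part is the strong linkage principle itself, not merely the block decomposition plus a combinatorial equivalence.
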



\subsection{Minimal facets}
The following lemma describes some tilting modules which are simple.
We call a facet $F$  $minimal$ if it lies in the $interior$ of the dominant Weyl chamber $C$
and no other facet in its orbit which also lies in the interior of $C$ can be smaller than it
in Bruhat order. We then have the following easy lemma: 

\begin{lemma}\label{minfacet} Let $F$ be a minimal facet and 
let $\la$ be an integral dominant weight such that $\la+\rho\in F$. 

(a) In both the quantum group case and in the modular case, we have
$T(\la)=V(\la)$ and the nullity of $T(\la)$
is equal to $k(F)$, the number of hyperplanes in which $F$ lies.

(b) Consider the modular case in
characteristic $p$. Let $\lar=p^r\la+(p^{r-1}-1)\rho$.
Then $T(\lar)=V(\lar)$ and the nullity of $T(\lar)$
is equal to $r|R^+|+k(F)$.
\end{lemma}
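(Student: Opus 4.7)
The plan is to show $T(\lambda)$ is simple via the Linkage Principle, then read off the nullity from the Weyl dimension formula, and finally upgrade to $\lambda^{(r)}$ via Lemma \ref{dimmod}. By Theorem \ref{linkagetheorem}, every Weyl factor $V(\mu)$ in a Weyl filtration of $T(\lambda)$ has $\mu+\rho$ in the affine orbit of $\lambda+\rho$ and $\mu\le\lambda$ in Bruhat order. If $\mu+\rho$ lay on a wall of $C$, then $(\mu+\rho,\alpha)=0$ for some simple root $\alpha$, so $\mu$ would fail to be dominant and $V(\mu)$ could not appear; hence $\mu+\rho$ lies in the interior of $C$, in a facet $F'$ of the affine orbit of $F$ with $F'\le F$. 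Minimality of $F$ forces $F'=F$, and Bruhat-triviality inside a single facet forces $\mu=\lambda$. The analogous argument via the strong linkage principle rules out composition factors $L(\mu)$ of $V(\lambda)$ with $\mu<\lambda$, so $T(\lambda) = V(\lambda) = L(\lambda)$ is simple.

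Simpleness gives $\End_{\C_R}(T(\lambda)) = R\cdot \id$, so the nullity equals the $\mathfrak{m}$-adic valuation of $\dim T(\lambda)$. Using the (quantum) Weyl dimension formula
\[
\dim V(\lambda) = \prod_{\alpha \in R^+}\frac{[(\lambda+\rho,\alpha)]}{[(\rho,\alpha)]},
\]
each numerator $[(\lambda+\rho,\alpha)]$ has a simple zero at $v=q$ exactly when $\ell \mid (\lambda+\rho,\alpha)$, which happens precisely when the unique hyperplane $H_{\alpha,k}$ with $k=(\lambda+\rho,\alpha)/\ell$ passes through $F$; the denominators are units at $v=q$ because $(\rho,\alpha)<h<\ell$. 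Hence the $(v-q)$-adic valuation of $\dim V(\lambda)$ equals the count $k(F)$. The modular case is formally identical with $p$ in place of $v-q$ and ordinary integers in place of $q$-integers, with the added wrinkle that one must rule out $p^2 \mid (\lambda+\rho,\alpha)$: if $(\lambda+\rho,\alpha)=kp$ with $p\mid k$, then $s_{\alpha,(k-1)p}(\lambda+\rho) = \lambda+\rho - p\alpha^\vee$ lies in the $W_p$-orbit of $\lambda+\rho$, and iterating such translations produces a strictly Bruhat-smaller orbit element in the interior of $C$, contradicting minimality of $F$.

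Part (b) then follows at once from Lemma \ref{dimmod}: since $T(\lambda) = L(\lambda)$ is simple, Steinberg's tensor product theorem yields $T(\lambda^{(r)}) = L((p^r-1)\rho)\otimes L(\lambda)^{[r]}$ simple of dimension $p^{r|R^+|}\dim L(\lambda)$, so $v_p(\dim T(\lambda^{(r)})) = r|R^+|+k(F)$ is the nullity. The main obstacle throughout is the modular valuation count: excluding $p^2\mid(\lambda+\rho,\alpha)$ from minimality of $F$ is transparent in rank one but needs a careful combinatorial-geometric treatment in higher rank, where the $W_p$-orbit of $\lambda+\rho$ and its interaction with the walls of the dominant chamber are considerably more intricate.
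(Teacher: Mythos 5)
Your proposal follows the same route as the paper's own proof: minimality of $F$ plus the linkage principle (Theorem \ref{linkagetheorem}) shows that no dominant weight $\mu$ in the affine orbit of $\lambda$ can be strictly smaller than $\lambda$, so $V(\lambda)$ is the only Weyl factor and $T(\lambda)=V(\lambda)$ is simple; the nullity is then read off the dimension formula as in Corollary \ref{modulenullity} and its modular analogue, and part (b) follows from Lemma \ref{dimmod}. Your extra details (dominance forces $\mu+\rho$ into the interior of $C$, two orbit points in one facet coincide, $\End_R$ free of rank one so the nullity is the valuation of $\dim$) are correct expansions of what the paper compresses into two sentences; note also that you implicitly work with $\lambda^{(r)}=p^r\lambda+(p^r-1)\rho$ as in Lemma \ref{dimmod}, which is what the statement should say (the exponent $p^{r-1}$ in the statement is a typo).

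The one place where you go beyond the paper is the modular valuation count, and there, as you admit, your argument is not complete. You are right that in characteristic $p$ the nullity of a simple tilting module is $\sum_{\alpha>0} v_p\bigl((\lambda+\rho,\alpha)\bigr)$, so the asserted equality with $k(F)$ requires $p^2\nmid(\lambda+\rho,\alpha)$ for every positive root; the paper buries this in the phrase ``direct consequence of the dimension formula (for $p>h$)'', where $p>h$ only guarantees that the denominators $(\rho,\alpha^\vee)$ are units. Your proposed fix, passing from $\lambda+\rho$ to $\lambda+\rho-p\alpha$ and iterating, does produce orbit points with smaller valuation at $\alpha$, but as written it does not yet contradict minimality: the translated point need not lie in the interior of the dominant chamber, and you have not shown that the facet of the orbit that one eventually reaches inside $C$ is comparable to, and strictly below, $F$ in the facet order --- which is exactly what the definition of a minimal facet lets you use. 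So on this sub-step your write-up carries the same gap as the paper's proof, only flagged more honestly; closing it requires the combinatorial fact that a facet in the interior of $C$ lying on a hyperplane $H_{\alpha,m}$ with $p\mid m$ always lies above a strictly smaller facet of its $W_p$-orbit inside the interior of $C$ (this is immediate in rank one, and for the type $A$ standard facets the issue never arises, since there all values $(\lambda+\rho,\alpha)$ are at most $n\ell<\ell^2$).
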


$Proof.$ By definition of minimal facet and the linkage principle, 
there exists no dominant integral weight
$\mu$ in the orbit of $\la$ such that $\mu<\la$ in Bruhat order.
Hence the only Weyl module which appears in the filtration of $T(\la)$ is
$V(\la)$ itself. The statement about the nullity is a direct consequence of
the dimension formula (for $p>h$  in the modular case).
The same argument also works for case (b), using Lemma \ref{dimmod}.


\subsection{Thick tensor ideals} 
Let $\I(F)$ be the thick ideal generated by the tilting modules $T(\la)$ with 
$\la+\rho\in F$. Recall that $C_+(F)$ is the region consisting of all
points $x\in \h^*$  which are on the positive side of any hyperplane which contains $F$.

\begin{proposition}\label{tensorexample} Let $F$ be a minimal facet,
and let $\la$ and $\lar$ be as in Lemma \ref{minfacet}. 

(a) In the quantum group case, the ideal $\I(F)$ contains
all modules  $T(\nu)$ with $\nu+\rho$ in $C_+(F)$. Any module in $\I(F)$ has nullity 
$\geq k(F)$.

(b) In the modular case, the ideal $\I(\Fr)$ generated by all
$T(\lar)$ with $\la+\rho\in F$ contains all modules $T(\nu)$ with $\nu\in\lar+X_+$,
where $X_+$ is the set of all dominant all dominant integral weights.
Any module in $\I(\Fr)$ has nullity at least $r|R^+|+k(F)$.
\end{proposition}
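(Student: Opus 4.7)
My plan is to split the proposition into the nullity lower bound and the explicit containment for each of (a) and (b), handling the nullity bound first since it follows directly from the ideal structure.

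For the nullity lower bounds in both parts, the key observation is that the $N_k$ are themselves thick ideals in $\C$. By Lemma~\ref{minfacet}(a) each generator $T(\la)$ of $\I(F)$ (i.e.\ each $\la$ with $\la+\rho\in F$) is a simple module of nullity exactly $k(F)$, so $T(\la)\in N_{k(F)}$. Hence the smallest thick ideal $\I(F)$ containing these generators is contained in $N_{k(F)}$, and every object of $\I(F)$ has nullity at least $k(F)$. The same reasoning, invoking Lemma~\ref{minfacet}(b), gives $\I(\Fr)\subseteq N_{r|R^+|+k(F)}$.

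For the containment statements I would use that tensor products of tilting modules are again tilting (the Corollary following Lemma~\ref{lem:good-filtration-tensor}). Given any $\la+\rho\in F$ and any $\mu\in X^+$, the module $T(\la)\otimes T(\mu)$ is a tilting module whose unique maximal weight is $\la+\mu$, so the indecomposable summand $T(\la+\mu)$ appears with positive multiplicity and lies in $\I(F)$. For part~(b), this already gives the full claim: with $\la$ replaced by $\lar$, letting $\mu$ run over $X^+$ yields $T(\nu)\in\I(\Fr)$ for every $\nu\in\lar+X^+$.

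Part~(a) requires a further geometric decomposition, which I expect to be the main obstacle: for every $\nu$ with $\nu+\rho\in C_+(F)$ one needs to write $\nu=\la+\mu$ with $\la+\rho\in F$ and $\mu\in X^+$. This should follow from the minimality of $F$, which constrains $\Delta(F)$ relative to the simple roots of $R$ and ensures that, as $\la+\rho$ ranges over the lattice points of $F$, the translates $(\la+\rho)+X^+$ jointly cover $C_+(F)\cap(X^++\rho)$. Carrying this out in general would require a careful analysis of the simple roots of the stabiliser $W(F)$ relative to those of $R$, possibly supplemented by translation functors to slide $\la$ around within $F$ while preserving dominance of $\nu-\la$.
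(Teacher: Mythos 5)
Your proposal is correct and takes essentially the same approach as the paper: the nullity bounds follow because the generators lie in the thick ideals $N_{k(F)}$ resp.\ $N_{r|R^+|+k(F)}$ (via Lemma \ref{minfacet}), and the containments follow from $T(\la)\otimes T(\mu)$ containing $T(\la+\mu)$ as a direct summand. The geometric decomposition you flag as the main obstacle in part (a) --- writing any $\nu+\rho\in C_+(F)$ as $(\la+\rho)+\mu$ with $\la+\rho\in F$ and $\mu$ dominant --- is precisely the step the paper itself asserts without further argument, so your outline matches the paper's proof.
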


\begin{proof} Assume $\nu+\rho\in C_+(F)$. Then we can find a dominant integral weight $\la$
such that $\la+\rho\in F$ and $\nu-\la$ is a dominant weight.
Hence the tensor product $T(\la)\otimes T({\nu-\la})$ has highest weight $\nu$.
It follows that $T_\nu\in \I(F)$, and hence has at least the nullity of $F$.
\end{proof}

\begin{example} In the modular situation, the case $\lambda = 0$ corresponds to the Steinberg representations $St_r$.
\end{example}

In order to get a more concrete description of these tensor ideals,
it will be important to determine when two different facets generate
the same tensor ideal.

\begin{definition}\label{tensorequivalence} Let $F_1$ and $F_2$ 
be minimal facets. We say that they are tensor equivalent if $\I(F_1)=\I(F_2)$.
\end{definition}

\begin{lemma}\label{genequiv} Let $\I$ be the tensor ideal generated by the $simple$ object $T$,
and let $S$ be an object in $\I$ with the same nullity as $T$.
 Then also $S$ generates $\I$. In particular, if $F_1$ and $F_2$ are tensor equivalent minimal facets,
then $k(F_1)=k(F_2)$. 
\end{lemma}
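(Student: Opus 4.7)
The plan is to prove the stronger statement that $T \in \langle S \rangle$; combined with the obvious inclusion $\langle S \rangle \subseteq \I$ this yields $\langle S \rangle = \I$. Since $S \in \I$, $S$ is a retract of $T \otimes Y$ for some object $Y$, so fix maps $\iota: S \to T \otimes Y$ and $\pi: T \otimes Y \to S$ with $\pi\iota = \id_S$. The key structural input is that $T = T(\lambda)$ simple implies $\End_R(T) = R \cdot \id_T$: the $\lambda$-weight space of the simple tilting module is free of rank one over $R$ and generates $T$, so any endomorphism is determined by a scalar. Consequently the partial trace $E_T : \End_R(T\otimes Y) \to \End_R(T)$ takes only scalar values.

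For $a \in \End_R(S)$, write $E_T(\iota a \pi) = c(a)\,\id_T$ for a unique $c(a) \in R$; the assignment $a \mapsto c(a)$ is $R$-linear. By cyclicity of the trace and the partial-trace identity recalled in Section~\ref{sec:prel},
\[ Tr_S(a) \;=\; Tr_{T\otimes Y}(\iota a \pi) \;=\; Tr_T\bigl(E_T(\iota a \pi)\bigr) \;=\; c(a)\,\dim(T). \]
Because $T$ is simple of nullity $k$, the image of $Tr_T$ is the principal ideal $(\dim(T)) = \mathfrak{m}^k$, so $\dim(T) = u p^k$ for some unit $u$. Because $S$ has nullity exactly $k$, the image of $Tr_S$ is an ideal of $R$ contained in $\mathfrak{m}^k$ but not in $\mathfrak{m}^{k+1}$; since $\mathfrak{m}=(p)$ is principal this forces the image of $Tr_S$ to be all of $\mathfrak{m}^k$. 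Substituting, the ideal $I_0 := \{c(a) : a \in \End_R(S)\}$ of $R$ must satisfy $u p^k I_0 = \mathfrak{m}^k$, hence $I_0 = R$. So there exists $a_0 \in \End_R(S)$ with $c(a_0) = 1$, i.e.\ $E_T(\iota a_0 \pi) = \id_T$.

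Finally, unfolding the definition of $E_T$ and factoring the middle through $S \otimes Y^*$ gives
\[ \id_T \;=\; \bigl[(\id_T\otimes \tilde d_Y)(\iota\otimes\id_{Y^*})\bigr] \circ (a_0\otimes\id_{Y^*}) \circ \bigl[(\pi\otimes\id_{Y^*})(\id_T\otimes i_Y)\bigr], \]
which exhibits $T$ as a retract of $S\otimes Y^*$, so $T \in \langle S\rangle$. For the ``in particular'' part, if $F_1,F_2$ are tensor-equivalent minimal facets and $T_i = T(\lambda_i)$ with $\lambda_i+\rho \in F_i$, then each $T_i$ is simple of nullity $k(F_i)$ by Lemma~\ref{minfacet}; since $T_2 \in \I(F_2) = \I(F_1)$, Proposition~\ref{tensorexample} gives $k(F_2) \geq k(F_1)$, and symmetry yields equality. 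The main obstacle in the argument is pinning down the image of $Tr_S$ to be all of $\mathfrak{m}^k$ rather than just the possibly smaller principal ideal $(\dim(S))$; this is exactly where the sharp ``nullity equals $k$'' hypothesis on $S$, combined with the principality of $\mathfrak{m}$, is essential.
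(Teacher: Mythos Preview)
Your proof is correct and follows essentially the same approach as the paper's. Both arguments exhibit $T$ as a retract of $S\otimes Y^*$ by building the maps $T\to S\otimes Y^*$ and $S\otimes Y^*\to T$ out of the retraction data for $S\hookrightarrow T\otimes Y$ together with the (co)evaluation for $Y$, then use simplicity of $T$ (so $\End_R(T)=R\cdot\id_T$) and the equal-nullity hypothesis to see that the resulting scalar is a unit. Your use of the partial trace $E_T$ packages the same computation that the paper does explicitly via its maps $u,v$ and the identity $\alpha\dim(T)=Tr(a)$; you are also more explicit than the paper about why principality of $\mathfrak m$ forces the image of $Tr_S$ to be exactly $\mathfrak m^k$, and about why $\End_R(T)=R$ over the local ring.
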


\begin{proof} By assumption, there exists an object $W$ such that $S\subset T\otimes W$.
There exist maps 
$$\iota: \1\to W\otimes  W^*, \quad d: W\otimes  W^*\to \1$$
such that $d\circ\iota = \dim(W)$.
Moreover, by assumption, there exist morphisms
$\iota_S: S\to T\otimes W$ and $d_S: T\otimes W\to S$ such that
$d_S\circ\iota_S=id_S$.
Let $a\in\End(S)$ such that $Tr(a)$ has minimal nullity.
We then define maps $u: T\to S\otimes W^*$ and $v:  S\otimes  W^*
\to T$ by
$$u= (d_S\otimes id_{ W^*})\circ (id_T\otimes \iota),\quad v= 
(id_T\otimes d)\circ (\iota_Sa\otimes id_{W^*}).$$
It follows that $vu$ is an endomorphism of the simple
object  $T$, and hence
$vu=\alpha id_T$ for some scalar $\alpha$.
By functoriality of the trace operation, it follows from the definitions
$$\alpha \dim(T) = Tr(vu)=Tr(uv)=Tr(a).$$
As $Tr(a)$ and $\dim(T)$ have the same nullity, it follows that $\alpha$
is invertible. But then $\alpha^{-1}uv$ is an idempotent in
$\End(S\otimes \bar W)$ whose image is isomorphic to $T$.
\end{proof}

\begin{corollary} If $F_1>F_2$ and they have the same nullity,
then $\I(F_1)=\I(F_2)$ and $C_+(F_1)\subset C_+(F_2)$.
\end{corollary}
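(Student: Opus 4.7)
My plan is to reduce the corollary to a single geometric fact: for each $\lambda_1+\rho\in F_1$, there exist $\lambda_2+\rho\in F_2$ and a dominant integral weight $\mu\in X^+$ such that $\lambda_1=\lambda_2+\mu$. In other words, $F_1\subseteq F_2+X^+$. Both assertions of the corollary then follow quickly from this.

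For the cone inclusion, since $C_+(F)$ arises as $F+X^+$ (this is the interpretation implicit in the proof of Proposition~\ref{tensorexample}), we obtain
\[
C_+(F_1)\;=\;F_1+X^+\;\subseteq\;(F_2+X^+)+X^+\;=\;F_2+X^+\;=\;C_+(F_2),
\]
using $X^++X^+=X^+$.

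For the ideal inclusion $\I(F_1)\subseteq\I(F_2)$: with $\lambda_1=\lambda_2+\mu$ as above, the tilting module $T(\lambda_1)$ appears as the (unique) highest-weight direct summand of the tilting module $T(\lambda_2)\otimes T(\mu)$, so $T(\lambda_1)\in\I(F_2)$ for every $\lambda_1+\rho\in F_1$. For the reverse $\I(F_2)\subseteq\I(F_1)$, I would invoke Lemma~\ref{genequiv}: by Lemma~\ref{minfacet}, $T(\lambda_1)=V(\lambda_1)$ is simple with the same nullity $k(F_1)=k(F_2)$ as the simple generator $T(\lambda_2)$ of $\I(F_2)$. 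Since $T(\lambda_1)\in\I(F_2)$, Lemma~\ref{genequiv} yields that $T(\lambda_1)$ itself generates $\I(F_2)$, whence $\I(F_2)=\I(T(\lambda_1))\subseteq\I(F_1)$.

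The hard step is the geometric claim $F_1\subseteq F_2+X^+$. The ordering $F_1>F_2$ only places $F_1$ in the closure of the half-space intersection cut out by the $k$ hyperplanes through $F_2$, a region that is strictly larger than the translated cone $F_2+X^+$ in general (the latter is also bounded by the walls of the dominant cone). Upgrading to inclusion in $F_2+X^+$ is where the equal-nullity hypothesis plays its essential role: it prevents $F_1$ from lying on extraneous hyperplanes that would carry it sideways out of $F_2+X^+$, so that, together with the minimality of the facets, $F_1$ must sit in a translate of $F_2$ by a dominant direction.
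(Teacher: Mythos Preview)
Your overall route is the intended one and steps 2--4 are correct: from $F_1>F_2$ you get $\I(F_1)\subseteq\I(F_2)$ via Proposition~\ref{tensorexample}, then Lemma~\ref{genequiv} (applied with the simple generator $T(\lambda_2)$ of $\I(F_2)$ and the object $T(\lambda_1)\in\I(F_2)$ of equal nullity) gives the reverse inclusion, and the cone inclusion follows from $C_+(F_1)\subseteq F_1+X^+\subseteq F_2+X^++X^+\subseteq C_+(F_2)$ once you have $F_1\subseteq F_2+X^+$.

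The problem is your last paragraph. The inclusion $F_1\subseteq F_2+X^+$ is \emph{not} the hard step and does \emph{not} require the equal-nullity hypothesis. By the very definition of the order on facets, $F_1>F_2$ says that $F_1$ lies on the positive side of every hyperplane through $F_2$, i.e.\ $F_1\subset C_+(F_2)$. You correctly note that $C_+(F_2)$ is globally larger than $F_2+X^+$, but the point you are missing is that these two regions coincide once one intersects with the interior of the dominant Weyl chamber --- and this is exactly the assertion made in the first sentence of the proof of Proposition~\ref{tensorexample} (``Assume $\nu+\rho\in C_+(F)$. Then we can find $\lambda$ with $\lambda+\rho\in F$ and $\nu-\lambda$ dominant''). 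Since a minimal facet $F_1$ lies in that interior by definition, $F_1\subset C_+(F_2)$ already forces $F_1\subset F_2+X^+$. No appeal to equal nullity or to minimality of $F_1$ is needed here; the equal-nullity hypothesis enters only through Lemma~\ref{genequiv}, where you already used it correctly. So your argument is complete once you drop the hand-waving in the final paragraph and instead invoke the same observation you already cited for the identity $C_+(F)=F+X^+$.
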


\begin{remark} \label{nullity-rem} 1.  In our cases, the nullity of the tensor ideal would correspond to
the length of the longest element of the stabilizer of $F$. On the other hand,
it is well-known that the longest element $w_0$ of a parabolic subgroup is in a cell for which the
$a$-function is equal to the length of  $w_0$.   For rank 2  and for type $A_n$,
these exhaust all two-sided cells.

2. All thick ideals are explicitly known in type $A$, see \cite{Shi}.  Each of them can
be associated to a parabolic subgroup, and hence to a facet.
However, there already seems to be a left cell 
for type $D_4$ whose value of the $a$-function, seven,  would not be the length
of the longest element of a parabolic subgroup of affine $D_4$, see \cite{DuD4},
\cite{ChenD4}.

\end{remark}

\def\lv{\vec l}



\section{Quantum and modular tilting modules in type A} \label{sec:typeA}

For $U_q(\mathfrak{sl}_n)$ two-sided cells of the affine Weyl group are parametrized by Young diagrams $\lambda$ of size $n$. We show that the thick ideal $\mathcal{I}(F_0(\lambda))$ agrees with the thick ideal attached to the two-sided cell by work of Shi and Ostrik. This also connects the nullity with the values of Lusztig's $a$-function. We propose a geometric description of the thick ideals for $U_q(\mathfrak{sl}_n)$ and for $SL(n)$. We assume throughout that $\ell$ and $p$ are bigger than the Coxeter number $h$, which is equal to $n$ in our case.

\subsection{Description of $\I(F)$s} We would like to get an elementary
geometric  description of the region in which the dominant integral weights
$\la$ lie for which $T(\la)$ is in $\I(F)$, for a given minimal facet.
For rank 2 these can be found in \cite{Lusztig-homepage}. It turns out 
that the regions corresponding to cells in the dominant Weyl chamber given by Lusztig 
coincide with the regions described in Proposition \ref{tensorexample}.
We expect that the regions described in Proposition \ref{tensorexample}
would also describe cells beyond rank 2.  We will illustrate this below
for some cases.

The cells for the affine Weyl groups of type $A$ have been 
determined by Shi in \cite{Shi}. Using Ostrik's results, this implies that
the thick ideals in type $A_{n-1}$ are
labeled by the partitions $\la$ of the set $\{ 1,2,\ ...,\ n\}$. 
As usual, we write partitions $\la=[\la_1,\la_2,\ ...\ \la_r]$
where the $\la_i$ are integers satisfying $\la_1\geq \la_2\geq\ ...\ \la_r >0$.
We identify them with Young diagrams, where $\la_i$ indicates the number
of boxes in the $i$-th row.
We denote weights of $\sl_n$ by the projections of $n$-tuples of integers
into the plane of $\R^n$ given by vectors whose coordinate sum is equal to 0.
We will usually only write the $n$-tuple of integers for simplicity of notation.
We will also use the notation $\lv$ for $\la+\rho$.

\begin{definition}\label{standardfacet}
Let $\ell$ be a positive integer, $\ell>n$, and let $s$ be the number of columns
of $\la$. Then we write the expression $(i-1)\ell+x_{j-1}$ into the box
of $\la$ in the $i$-th row and $j$-th column, where we have the convention
$0=x_0<x_1<\ ...\ < x_{s-1}<\ell$. The {\it standard facet $F_0(\la)$}
consists of all $n$-tuples whose coordinates consist of all possible
arrangements of numbers in the boxes of $\la$, written in descending order.
\end{definition}

\begin{definition} We call a facet $F$ strongly minimal if $F'\not < F$
for any $F'$ with $W(F')\cong W(F)$. Here all facets are assumed to be in the interior of $C$.
\end{definition}

\begin{definition}\label{simpleroot}
Let $F$ be a facet. We call $\al$ a root of the stabilizer $W(F)$ if 
there exists an integer $n_\al$ such that $(\al, x)=n_\al\ell$
for every $x\in F$. We call a root $\al$ positive if $(\al,x)>0$ for all $x\in F$.
Finally, we call a collection $R_F^+=\{ \al_i\}$ of positive roots of $W(F)$
a set of  simple roots of $F$
if every positive root of $W(F)$  can be uniquely written as a linear 
combination of the $\al_i$s with nonnegative integer coefficients.
\end{definition}

\begin{remark}\label{D(F)} The simple roots of $W(F)$ allow us an easy description
of the region $C_+(F)$: If $(\al, y)=n_\al\ell$ for all $y\in F$,
then $C_+(F)$ consists of all points $x\in\h^*$ such that 
 $(\al, x)\geq n_\al\ell$ for all simple $\al\in W(F)$. It is tempting to speculate
whether a description of the tensor ideal $\I(F)$ can be given via the region
$$D(F)\ = \bigcup_{W(F')=W(F)} C_+(F'),$$
where it would be enough to only consider strongly 
minimal facets $F'$ on the right hand side. We study this question in more
detail for type $A$ in the following sections.
\end{remark}

\subsection{More facets} Let $y$ be a point in the interior or lower closure of a given alcove.
The following definition will be useful for Shi's algorithm for identifying
cells which will be used later.

\begin{definition}\label{ychain}
 Let $y$ be a point in the interior or lower closure of a given alcove.
We call a subset $\{ y_i, i\in I\}$ of the coordinates of $y$
 a $y$-chain if $|y_i-y_j|\geq \ell$
for all $i,j\in I$. We call it an $\ell$-strict (or just strict) $y$-chain
if its elements are of the form $y_i-r\ell$ for $0\leq r < |I|-1$.
\end{definition}

Observe that if $y$ is a point in the standard facet $F_0(\la)$,
the coordinates of $y$ can be written as a disjoint union $\bigcup_j \Gamma_j$
of strict chains where $|\Gamma_j|=\la_j^T$. Namely $\Gamma_j$ consists of all the
coordinates of $y$ which are congruent to $x_{j-1}$ mod $\ell$,
where $x_0=0$. 

There is a more general procedure to produce facets
from any standard tableau of shape $\la$, i.e. for any filling of the
boxes of $\la$ such that the numbers increase along the rows and down
the columns. Let $i(r,c)$ be the number in the box in the $r$-th row and
$c$-th column. Then we define the facet $F_t$ by the points $y$ satisfying
the equalities
$$y_{i(r,c)}-y_{i(r+1,c)}=\ell$$
for any pair of boxes  $(r,c)$ and $(r+1,c)$ of our tableau. In general,
these equations may describe a collection of facets.
In this case, we pick the lowest one. It can be obtained by adding the
additional inequalities $y_i-y_{i+1}<\ell$, $1\leq i<n$. We now say that
\begin{equation}\label{faceconjugate}
 F\sim F_0(\la)
\end{equation}
if $F=F_t$ for some standard tableau of shape $\la$.
 We now also define the region $D(\la)$ by
\begin{equation}\label{Dlambdadef}
D(\la)\ =\ \bigcup_{F\sim F_0(\la)} C_+(F).
\end{equation}

\begin{remark} 1. Not every minimal facet is of the form $F_t$ for a standard
tableau. E.g. for $\sl_3$ the facet given by $y_2-y_3=\ell$ and $y_1-y_2<\ell$
is minimal, but can not be defined via a standard tableau $t$. However, we will
see that any strongly minimal facet can be obtained from a standard tableau.

2. If $1\leq i\leq n$, we define $\bar i=n+1-i$.
 If a root $\al$ is given by $\al(y)=y_i-y_j$,
we define the root $\bar\al$ by $\bar\al(y)=y_{\bar j}-y_{\bar i}$.
If $F$ is a facet defined by $\al(y)=\ell$ for certain roots $\al$,
we analogously define the facet $\bar F$ by the same equalities replacing
each $\al$ by $\bar\al$. We leave it to the reader to check that 
$F_0(\la)=\bar F_t$, where $t$ is the standard tableau obtained by filling
the boxes of $\la$ row by row.
\end{remark}

\subsection{Identifying minimal facets and their cells}
Shi has given several algorithms how to identify the 2-sided cell to which a given alcove
belongs. We review one of the here, and another one in the next section, following the
presentation in \cite{Cooper}, \cite{Cooper-journal}.

Given a point $y$ in the interior of the dominant Weyl chamber,
define a Young diagram $\mu=\mu(y)$ 
as follows: $\mu_1$ is the maximum of $|\Gamma_1|$,
for all possible $y$-chains $\Gamma_1$. Assuming we know $\mu_1$ up to $\mu_i$
we then define $\mu_{i+1}$ by the condition
\begin{equation}\label{mudef}
\sum_{j=1}^{i+1}\ \mu_j\ =\ max\{\ \sum_{j=1}^{i+1} |\Gamma_j|\ \},
\end{equation}
where the $\Gamma_j$s are mutually disjoint $y$-chains, and the maximum is taken
over all possible collection of $i+1$ disjoint $y$-chains.

\begin{proposition}\label{identifyideals} If $y$ is a point on a facet $F\sim F_0(\la)$ for
the Young diagram $\la$, we have $\mu(y)=\la^T$. Moreover, $F_0(\la)$ is a minimal facet.
\end{proposition}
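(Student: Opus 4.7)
The plan is to prove the two assertions of the proposition in sequence: the shape identity $\mu(y) = \lambda^T$ (first for $F_0(\lambda)$, then transferred to $F_t$) and the minimality of $F_0(\lambda)$ in its $W_\ell$-orbit.

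For $y \in F_0(\lambda)$ the argument is direct. By construction the multiset of coordinates is $\{(i-1)\ell + x_{j-1} : (i,j) \in \lambda\}$ with $0 = x_0 < x_1 < \cdots < x_{s-1} < \ell$, written in descending order. The $j$-th column of $\lambda$ supplies the arithmetic progression $\{x_{j-1}, \ell + x_{j-1}, \ldots, (\lambda_j^T - 1)\ell + x_{j-1}\}$, which is a strict $y$-chain of length $\lambda_j^T$, and the $s$ columns give $s$ pairwise disjoint such chains. For the matching upper bound I would observe that the $\lambda_i$ values coming from row $i$ of $\lambda$ all lie in the half-open interval $[(i-1)\ell, i\ell)$, so any single $y$-chain contains at most one value per row. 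Summing over rows, $k$ disjoint $y$-chains have combined size at most $\sum_{i=1}^{\lambda_1^T}\min(\lambda_i, k) = \sum_{j=1}^k \lambda_j^T$, and equality for each $k$ forces $\mu(y) = \lambda^T$.

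To transfer to $F = F_t$ with $t$ a standard tableau of shape $\lambda$, I would exploit the parametrization forced by the column equations $y_{i(r,c)} - y_{i(r+1,c)} = \ell$, namely $y_{i(r,c)} = y_{i(1,c)} - (r-1)\ell$. This alone gives strict column chains of length $\lambda_c^T$ and hence the lower bound $\sum_{j=1}^k \mu_j \geq \sum_{j=1}^k \lambda_j^T$. The matching upper bound reduces, via the translation relation above, to bounding the spread of the top-row values $y_{i(1,1)}, \ldots, y_{i(1,s)}$; the additional inequalities singling out the lowest cell of the codim-$(n-s)$ subspace defined by the column equations then confine the row entries of $t$ to intervals of length less than $\ell$, restoring the one-element-per-row argument.

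For minimality I would use the decomposition $W_\ell = W \ltimes \ell Q$: any facet $F'$ in the $W_\ell$-orbit of $F_0(\lambda)$ that still lies in the interior of $C$ must, after absorbing the Weyl part into the chamber choice, be of the form $F_0(\lambda) + \ell \nu$ for some $\nu$ in the dominant cone of $Q$. Such an $F'$ has strictly larger coordinates on every hyperplane containing $F_0(\lambda)$, so $F_0(\lambda) < F'$ in the Bruhat order on facets; hence no member of the orbit sits strictly below $F_0(\lambda)$. The main obstacle is the upper-bound part of step two: the ``lowest facet'' condition, as spelled out in the paper via $y_i - y_{i+1} < \ell$ for adjacent indices, needs to be promoted to the full system of inequalities that picks out a single cell of the hyperplane arrangement, because otherwise the codim-$(n-s)$ slice defined by the column equations of $F_t$ can contain multiple cells on which $\mu(y)$ takes different values.
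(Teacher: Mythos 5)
Your first step (the standard facet $F_0(\lambda)$ itself) is fine and is essentially the paper's own counting: a chain meets each window of length $\ell$ at most once, and the windows of $F_0(\lambda)$ contain exactly $\lambda_1,\lambda_2,\dots$ coordinates. The trouble starts with the transfer to a general $F=F_t$, where you prove only the lower bound and, as you yourself note, leave the upper bound unresolved. The fix you sketch --- that the lowest-cell inequalities ``confine the row entries of $t$ to intervals of length less than $\ell$'' --- does not follow from the column equalities together with $y_i-y_{i+1}<\ell$. Take $\lambda=[2,2]$ and the standard tableau with columns $\{1,2\}$ and $\{3,4\}$: the lowest cell in the slice $y_1-y_2=\ell=y_3-y_4$ consists of the points $(2\ell+x_1,\ \ell+x_1,\ \ell,\ 0)$ with $0<x_1<\ell$; there the row entries $y_1=2\ell+x_1$ and $y_3=\ell$ differ by more than $\ell$, and $\{y_1,y_2,y_4\}$ is a $y$-chain of length $3$, so the bound $\mu_1\le\lambda_1^T=2$ you want is false on that cell. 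So this is not a technicality to be tightened: the upper bound for $F_t$ needs precise combinatorial input about which inequalities (beyond the column equalities and the consecutive-difference conditions) actually cut out the facets relevant to Shi's parametrization, and your argument does not supply it. The paper's proof runs the window count $\sum_m\min(i,I(m))$, where $I(m)$ is the number of coordinates in $[m\ell,(m+1)\ell)$, and compares it with the number of boxes of $\lambda$ in the first $i$ columns; for the standard facet this is your row argument, but for general $F_t$ it is exactly the knowledge of the $I(m)$ that has to be justified.

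The minimality argument is a wrong approach rather than a gap. It is not true that a facet in the $W_\ell$-orbit of $F_0(\lambda)$ lying in the interior of the dominant chamber must be of the form $F_0(\lambda)+\ell\nu$ with $\nu$ dominant: the dominant representative of $y+\ell q$, $q\in Q$, is obtained by reordering coordinates and is in general not a translate of $y$ at all. Already for $\mathfrak{sl}_3$ and $F_0([2,1])=\{(\ell,x_1,0)\}$, translating by $\ell(\epsilon_2-\epsilon_3)$ and sorting gives the facet $\{(2\ell+x_1,2\ell,0)\}$, which lies in the interior of $C$, is in the orbit, and is not $F_0([2,1])+\ell\nu$ for any $\nu$ (the two segments do not even have the same direction vector). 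So ruling out dominant translates rules out almost nothing, and your conclusion does not follow. The paper argues differently: it fixes the distinguished point $y\in F_0(\lambda)$ and observes that any dominant $\tilde y$ in its orbit has the same coordinate sum and the same number of coordinates in each residue class mod $\ell$, with positive strictly decreasing entries; by construction $y$ has the smallest possible coordinates (partial sums) subject to these constraints, so $\tilde y\le y$ with $\tilde y\neq y$ would force a strictly larger partial sum somewhere, a contradiction. You need an argument of this kind that quantifies over the whole orbit, not just over translates.
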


\begin{proof}  Let $y$ be a  point on
the  facet $F$. Let $\Gamma_i$ be the chain consisting of all the  entries
$y_i$ which are congruent to $i-1$ mod $\ell$; by construction it has $\la_i^T$ elements.
The claim would follow
if we can show that for any other collection of mutually disjoint $y$-chains
$\tilde \Gamma_j$ we have 
$$\sum_{j=1}^{i+1} |\tilde \Gamma_j|\ \leq \ \sum_{j=1}^{i+1} |\Gamma_j|,
\quad {\rm for\ } 0\leq i<n.$$
Let $I(m)$ be  the number of indices $k$ for which
$m\ell\leq y_k<(m+1)\ell$. Then obviously the number of such indices in a disjoint
union of $i$ chains is less or equal to the minimum of $i$ and $I(m)$.
Summing over all $m$ from 0 to $n$ shows that the largest possible
number we can get is the number of boxes of $\la$ in its first $i$ columns.
As by induction assumption $\mu_j=\la_j^T$ for $1\leq j\leq i$, it follows that
$\mu(y)=\la^T$.

To prove the second claim, let $y$ be the point on $F_0(\la)$
with $x_j=j-1$. 
Observe that any point $\tilde y\in W.y\cup C$ with $\tilde y\leq y$
 needs to have the same $L^1$
norm, it needs to have the same number of coordinates congruent
$i$ mod $\ell$ as $y$ for each $i$, and its coordinates need
to be positive and strictly decreasing. By construction, the coordinates of $y$ are the smallest
possible numbers subject to these constraints. Hence if $\tilde y\neq y$,
we could find an $i$ such that $\sum_{j=1}^i \tilde y_j>\sum_{j=1}^i  y_j$,
 contradicting $\tilde y\leq y$.
\end{proof} 

\subsection{Description of $D(\la)$} We now describe a second way how to determine
the two-sided cell to which a given alcove in the dominant Weyl chamber belongs.
It is also due to Shi.
Given a point $y$ in the dominant Weyl chamber, we  construct a standard tableau
$t_y$
as follows: We start with putting the number 1 into the top-left box.
We then add the box with the number 2 on the right if $y_1-y_2<\ell$,
and we add it below the first box if $y_1-y_2\geq \ell$.
Having placed boxes with the numbers 1 until $i$, we add the box containing
$i+1$ at the bottom of the left-most column such that $y_r-y_{i+1}\geq \ell$,
where $r$ is the number in the lowest box of that column. Then one
can show (see e.g. \cite{Cooper}, Section 3.2 and 3.2):

\begin{proposition}\label{Dyalgorithm}
If $y$ is a dominant integral weight, the procedure above constructs a 
standard tableau $t_y$. If $\la$ is the associated Young diagram,
and $y$ is in the lower closure of the alcove $A$, then $A$ is in the two-sided
cell labeled by $\la^T$.
\end{proposition}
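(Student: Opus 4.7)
The plan is to verify the two assertions in turn, reducing the second to Proposition \ref{identifyideals} together with Shi's classification of two-sided cells in affine type $A$.

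For the first claim (the procedure yields a well-defined standard Young tableau), the algorithm never stalls: if no existing column accepts $i+1$, a new rightmost column does so vacuously. Entries are increasing down columns by construction, since each new entry is appended at a column bottom. The nontrivial point is that the shape remains a Young diagram, i.e.\ that column $c$ is never strictly shorter than column $c+1$. I would prove this by induction on $i$: when $i+1$ is placed at the bottom of column $c$, say at row $r$, the leftmost rule forces the bottom entry $r'$ of column $c-1$ to satisfy $y_{r'} - y_{i+1} < \ell$; but standardness of the tableau so far implies that every entry above $r'$ in column $c-1$ corresponds to a strictly larger $y$-coordinate, and so column $c-1$ was already at least $r$ boxes tall. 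Rows are then automatically increasing left-to-right, since at the moment $(r,c)$ is filled, all positions $(r,c')$ with $c'<c$ were filled earlier and thus carry smaller indices.

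For the shape computation, the central observation is that each column of $t_y$ is a $y$-chain in the sense of Definition \ref{ychain}: the rule $y_{r_k} - y_{i+1} \geq \ell$ for appending $i+1$ below $r_k$ in a column $\{r_1<r_2<\cdots<r_k\}$, combined with $y_{r_1}>y_{r_2}>\cdots$ (inherited from the monotonicity imposed on the previously placed entries), forces every pairwise difference within a column to be at least $\ell$. A standard exchange/patience-sorting argument, worked out in \cite[Sections 3.2--3.3]{Cooper}, shows that the greedy leftmost rule is optimal in the sense that the union of the first $i$ columns realizes the maximum $\sum_{j=1}^{i}\mu_j$ from \eqref{mudef} over all collections of $i$ disjoint $y$-chains. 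Consequently the column lengths of $t_y$ are exactly the parts $\mu_1 \geq \mu_2 \geq \cdots$ of $\mu(y)$, so the shape of $t_y$ is the conjugate $\mu(y)^T$. When $y$ lies in the lower closure of $A$ on a facet $F \sim F_0(\la)$, Proposition \ref{identifyideals} gives $\mu(y)=\la^T$, and hence $t_y$ has shape $\la$.

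It remains to identify the two-sided cell of $A$ with the partition $\la^T$. This is Shi's classification from \cite{Shi}: in type $A_{n-1}$ the two-sided cells of the affine Weyl group are parametrized by partitions of $n$, and the cell containing an alcove $A$ is determined by the partition produced by Shi's chain-counting recipe applied to any point in the lower closure of $A$. Since this recipe is precisely the definition of $\mu(y)$, and we have $\mu(y)=\la^T$, the cell of $A$ is labeled by $\la^T$ in the convention used here (so that, for example, the Steinberg cell corresponds to the single-column partition $(1^n)$). The main obstacle in the whole argument is the optimality of the greedy column-packing, i.e.\ the identification of the column-length multiset of $t_y$ with the partition $\mu(y)$; everything else is either a direct combinatorial verification or an appeal to Shi's classification.
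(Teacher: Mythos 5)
The paper gives no proof of Proposition \ref{Dyalgorithm} at all: it is quoted from Shi's work, with the preceding sentence referring to \cite[Sections 3.2--3.3]{Cooper}. Your proposal is therefore essentially the paper's approach with some details filled in: the two substantial ingredients --- the Greene-type optimality of the greedy column construction and the fact that the two-sided cell of an alcove is determined by the chain-count partition --- are again delegated to \cite{Cooper} and \cite{Shi}, and your reduction (each column of $t_y$ is a $y$-chain in the sense of Definition \ref{ychain}, the column lengths are the parts of $\mu(y)$ from \eqref{mudef}, hence the cell label is $\mu(y)=\la^T$) is the correct skeleton.

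Two repairs are needed in the parts you do argue. First, your justification that the shape stays a Young diagram is incomplete: from $y_{r'}-y_{i+1}<\ell$ for the bottom entry $r'$ of column $c-1$, together with monotonicity inside that column, one cannot conclude that column $c-1$ has at least $r$ boxes. The missing step is to compare with the entry $j$ directly above the new box in column $c$: the insertion rule gives $y_j-y_{i+1}\ge\ell$, so $y_{r'}<y_j$ and hence $r'>j$ since the coordinates of a dominant $y$ are weakly decreasing; on the other hand, by the inductive hypothesis column $c-1$ already had an entry in row $r-1$ when $j$ was placed, and that entry precedes $j$, so if column $c-1$ had height only $r-1$ its bottom entry would be smaller than $j$, contradicting $r'>j$. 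Second, your parenthetical example reverses the convention used here: for the Steinberg alcove the tableau $t_y$ is a single column, so $\la=[1^n]$ and the cell is labeled $\la^T=[n]$, the one-row partition, not $(1^n)$ (this is the convention consistent with Proposition \ref{identifyideals} and Theorem \ref{TypeAmain}). Relatedly, the mid-proof appeal to Proposition \ref{identifyideals} is unnecessary and its hypothesis (that $y$ lies on a facet $F\sim F_0(\la)$) is not part of the statement; your final paragraph already covers an arbitrary dominant integral $y$.
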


\begin{theorem}\label{Ilambda} The indecomposable module $T(\nu)$ is
in the ideal $\I(\la)$ generated by the facet $F_0(\la)$ if and only if
$\nu+\rho$ is in the region $D(\la)$ as defined in \ref{Dlambdadef}.
\end{theorem}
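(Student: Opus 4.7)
The plan is to bridge the geometric description via the facets $F_t$ with Ostrik's classification of thick ideals by right cells, using Shi's algorithms from Propositions \ref{identifyideals} and \ref{Dyalgorithm}. The target is the identification $\I(F_0(\lambda)) = $ thick ideal attached to the two-sided cell labeled $\lambda^T$, which is the content of Theorem \ref{TypeAmain}.

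For the inclusion $\supseteq$ (the ``if'' direction), suppose $\nu+\rho\in D(\la)$, so $\nu+\rho\in C_+(F_t)$ for some standard tableau $t$ of shape $\la$. The plan is first to observe that $F_t$ is a minimal facet: by Proposition \ref{Dyalgorithm}, any point on $F_t$ produces via Shi's algorithm a tableau of shape $\la$, so by Proposition \ref{identifyideals} the alcoves containing $F_t$ in their closure belong to the two-sided cell labeled $\la^T$. Proposition \ref{tensorexample} then gives $T(\nu)\in\I(F_t)$, and Theorem \ref{TypeAmain} together with Ostrik's classification forces $\I(F_t)=\I(F_0(\la))=\I(\la)$, since both ideals arise from minimal facets supporting alcoves in the same two-sided cell.

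For the inclusion $\subseteq$ (the ``only if'' direction), suppose $T(\nu)\in\I(\la)$. By Theorem \ref{TypeAmain}, the alcove $A$ containing $\nu+\rho$ belongs to the two-sided cell for $\la^T$. Choose a point $y$ in the lower closure of $A$ and apply Shi's algorithm to form the standard tableau $t_y$; by Proposition \ref{Dyalgorithm} its shape is $\la$, so the associated facet $F_{t_y}$ satisfies $F_{t_y}\sim F_0(\la)$. By construction of $t_y$, whenever boxes $(r,c)$ and $(r+1,c)$ are vertically adjacent in $t_y$, the coordinates of $y$ satisfy $y_{i(r,c)}-y_{i(r+1,c)}\geq\ell$; these inequalities define the hyperplanes containing $F_{t_y}$, and since $y$ lies in the lower closure of $A$, every point of $A$ satisfies the same inequalities (strictly in the interior). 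By Remark \ref{D(F)} this means $\nu+\rho\in C_+(F_{t_y})\subseteq D(\la)$.

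The main obstacle is the first direction: one has to establish that all the minimal facets $F_t$ with $t$ of shape $\la$ are genuinely ``tensor equivalent'' in the sense of Definition \ref{tensorequivalence}. This is not immediate from Lemma \ref{genequiv} alone (which gives matching nullity as a necessary condition), and ultimately relies on Theorem \ref{TypeAmain} to identify each $\I(F_t)$ with the thick ideal of the two-sided cell $\la^T$. A minor but needed technical point, addressed in the second direction, is that the lower closure of $A$ is the appropriate part on which to run Shi's algorithm so that $A$ indeed sits on the positive side of every wall of $F_{t_y}$; this uses the standard fact that the inequalities strict on the interior of $A$ match the defining inequalities of $C_+(F_{t_y})$.
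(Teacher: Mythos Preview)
Your proof follows the same route as the paper's: for the ``if'' direction, Proposition~\ref{tensorexample} together with Shi's cell identification (Propositions~\ref{identifyideals} and~\ref{Dyalgorithm}) places $T(\nu)$ in $\I(\lambda)$ whenever $\nu+\rho\in C_+(F_t)$ for a tableau $t$ of shape $\lambda$; for the ``only if'' direction, Shi's second algorithm applied to $y=\nu+\rho$ produces the tableau $t_y$ and the defining inequalities $y_{i(r,c)}-y_{i(r+1,c)}\geq\ell$ exhibit $y\in C_+(F_{t_y})\subseteq D(\lambda)$. The only structural difference is that you route the ``if'' direction through a forward reference to Theorem~\ref{TypeAmain} (to conclude $\I(F_t)=\I(F_0(\lambda))$), whereas the paper argues directly from Proposition~\ref{identifyideals} that $T(\nu)\in\I(\lambda)$ for $\nu+\rho\in F$; the forward reference is not circular, since the proof of~\ref{TypeAmain} does not invoke~\ref{Ilambda}.

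One point worth tightening, shared by both your argument and the paper's: in the ``only if'' direction you assert that Theorem~\ref{TypeAmain} forces the alcove of $\nu+\rho$ to lie in the two-sided cell $\lambda^T$ itself, but Ostrik's description only places it in some cell $\mu^T$ below $\lambda^T$ in the cell order. Shi's algorithm then yields $t_y$ of shape $\mu$, and one needs the (easy but unstated) inclusion $C_+(F_{t_y})\subseteq D(\lambda)$ for such $\mu$; this follows by dropping constraints in $t_y$ to obtain a sub-tableau of shape $\lambda$, but neither you nor the paper makes this step explicit.
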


\begin{proof} It follows from Proposition \ref{identifyideals} that whenever
$\nu +\rho\in F$ with $F\sim F_0(\la)$, then $T(\nu)\in\I(\la)$.
Hence $T(\nu)\in\I(\la)$ whenever $\nu+\rho\in D(\lambda)$ by
Proposition \ref{tensorexample}. To prove the other inclusion,
let $T(\nu)$ be an indecomposable tilting module with
highest weight $\nu$ such that $\nu+\rho$
is in an alcove belonging to the two-sided cell labeled by $\la^T$.
Then we obtain a tableau $t_y$ of shape $\la$, where $y=\nu+\rho$.
Let $i(r,c)$ be the number in the box in the $r$-th row and $c$-th column.
Then by construction $y_{i(r,c)}-y_{i(r+1,c)}\geq \ell$.
Define $F$ as the set of points $x$ such that  $x_{i(r,c)}-x_{i(r+1,c)}= \ell$.
This is exactly the facet obtained from $t_y$, see the discussion
above \ref{faceconjugate}.
Then obviously $y\in C_+(F)$.
\end{proof}

\subsection{Examples}
In the following we list
 the  {\it strongly minimal} facets $F $ conjugate (as defined in
\ref{faceconjugate}) to $F_0(\la)$ for $\sl_n$, $n\leq 5$,
for each Young diagram $\la$ with $n$ boxes.
 We use the convention that $0<x_1<x_2<\ ...\ <\ell$.
The region $C_+(F)$ is then given by all points $y$ satisfying
$(y,\al)\geq \ell$ for all roots $\al$ listed under simple roots.
We start with $\sl_3$, where all strongly minimal facets are standard
facets:
\[
\begin{tabular}{|c|c|c|c|} $\la$ & facet & nullity & simple roots\\
\hline
$[3]$ & $(x_2,x_1,0)$ &0&$\emptyset$\\
$[2,1]$ & $(\ell, x_1, 0)$ &1&$\ep_1-\ep_3$\\ 
$[1^3]$&$ (2\ell, \ell, 0)$&3&$\ep_1-\ep_2,\ep_2-\ep_3$\\
\end{tabular}
\]
In the following, we list all strongly minimal facets for $\sl_4$.
Observe that we also have a strongly minimal facet
which is not a standard facet for the diagram $[2,1^2]$.

\[
\begin{tabular}{|c|c|c|c|} $\la$ & facet & nullity & simple roots \\
\hline
$[4]$ & $(x_3, x_2,x_1,0)$ &0&$\emptyset$\\
$[3,1]$ & $(\ell, x_2, x_1, 0)$ &1&$\ep_1-\ep_4$\\ 
$[2^2]$&$ (\ell+x_1, \ell, x_1, 0)$&2&$\ep_1-\ep_3, \ep_2-\ep_4$\\
$[2, 1^2]$&$ (2\ell, \ell, x_1, 0)$&3&$\ep_1-\ep_2,\ep_2-\ep_4$\\
&$ (2\ell, \ell+x_1, \ell,  0)$&3&$\ep_1-\ep_3,\ep_3-\ep_4$\\
$[1^4]$&$ (3\ell, 2\ell, \ell, 0)$&6&$\ep_1-\ep_2,\ep_2-\ep_3,\ep_3-\ep_4$\\
\end{tabular}
\]
For $\sl_5$, we will omit the facets corresponding to $[5]$
(the fundamental alcove), to $[4,1]$ (the wall of the fundamental
alcove which is not in a reflection plane of the finite Weyl group)
and $[1^5]$, which consists of the point $(4\ell, 3\ell, 2\ell, \ell, 0)$
corresponding to the Steinberg module.
The remaining facets then consist of

\[
\begin{tabular}{|c|c|c|c|} $\la$ & facet & nullity \\
\hline
$[3,2]$ & $(\ell+x_1, \ell, x_2,x_1,0)$ &2&$\ep_1-\ep_4, \ep_2-\ep_5$\\
$[3,1,1]$ & $(2\ell,\ell, x_2, x_1, 0)$ &3&$\ep_1-\ep_2,\ep_2-\ep_5$\\ 
 & $(2\ell,\ell+x_1, \ell, x_2,  0)$ &3&$\ep_1-\ep_3,\ep_3-\ep_5$\\
 & $(2\ell,\ell+x_2,\ell+x_1,\ell,  0)$ &3&$\ep_1-\ep_4,\ep_4-\ep_5$\\
$[2^2, 1]$&$ (2\ell, \ell+x_1, \ell, x_1, 0)$&4&$\ep_1-\ep_3,\ep_3-\ep_5, \ep_2-\ep_4$\\
$[2, 1^3]$&$ (3\ell,2\ell, \ell, x_1, 0)$&6&$\ep_1-\ep_2,\ep_2-\ep_3,\ep_3-\ep_5$\\
&$ (3\ell,2\ell, \ell+x_1, \ell,  0)$&6&$\ep_1-\ep_2,\ep_2-\ep_4, \ep_4-\ep_5$\\
&$ (3\ell,2\ell+x_1, 2\ell, \ell,  0)$&6&$\ep_1-\ep_3,\ep_3-\ep_4,\ep_4-\ep_5$\\
\end{tabular}
\]


\subsection{The $a$-function} \label{sec:a-A}

Recall from the introduction that Lusztig defined an integer valued function on two-sided cells. 
In type $A$ there is an easy description of the $a$-function \cite[Section 4.2, 5.2]{Elias-Hogancamp}. By \cite{Shi} the two-sided cells of the affine Weyl group $W$ are parametrized by partitions of $n$. It is well-known that it is sufficient (in type A) to compute the value of the $a$-function in the finite case $W_n = S_n$. By \cite{Elias-Hogancamp} \[ a(\lambda^T) = r(\lambda) \] where $r(\lambda)$ is the row number of $\lambda$, the sum over the row numbers of the boxes in the Young diagram where the row number
of a box in the $i$-th row is $i-1$ (note that we use $\la^T$ for the partition labeled by $\la$ in \cite{Elias-Hogancamp}).

\begin{example} For $n=8$ consider the partition $\lambda = (4,3,1)$. Its row number is $0 + 0 + 0  + 0 + 1 + 1 + 1 + 2 = 5$. Hence the value of the $a$-function on the two-sided cell associated to $\lambda$ is 5. 
\end{example}

\begin{theorem}\label{TypeAmain}
The thick ideal $\I(\la)=\I(F_0(\la))$
generated by  the tilting modules $T(\nu)$ for which
$\nu+\rho\in F_0(\la)$ coincides with the thick ideal constructed by
Ostrik for the cell in the dominant Weyl chamber corresponding to the two-sided
cell labeled by the Young diagram $\la^T$.  In particular, the nullity of any
generating module $T(\nu)$ of that ideal is equal to the $a$-function of that cell.
Moreover, that thick ideal
contains all tilting modules $T(\nu)$ for which $\nu+\rho\in D(\la)$, 
as defined below (\ref{faceconjugate}).
\end{theorem}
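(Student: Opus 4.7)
The plan is to combine the two combinatorial identifications already carried out in the preceding subsections. Proposition \ref{identifyideals} places the standard facet $F_0(\la)$ in the two-sided cell of the affine Weyl group labelled by $\la^T$, Proposition \ref{Dyalgorithm} (Shi's tableau algorithm) does the same for an arbitrary dominant weight, and Theorem \ref{Ilambda} translates these into an explicit description of $\I(\la)$ as the set of $T(\nu)$ with $\nu+\rho \in D(\la)$. Given these, the theorem splits into an identification of thick ideals, a nullity computation, and a direct appeal to Theorem \ref{Ilambda}.

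For the first part, let $\mathcal{O}(\la^T)$ denote the thick ideal that Ostrik's theorem attaches to the right cell in $W_\ell^+$ sitting in the two-sided cell $\la^T$. The inclusion $\I(F_0(\la)) \subseteq \mathcal{O}(\la^T)$ is immediate from Proposition \ref{identifyideals}: every alcove whose lower closure contains $F_0(\la)$ lies in that two-sided cell. Conversely, if $T(\nu) \in \mathcal{O}(\la^T)$ then $\nu+\rho$ is in an alcove of the cell, so applying Proposition \ref{Dyalgorithm} to $y = \nu+\rho$ produces a standard tableau of shape $\la$, hence a facet $F \sim F_0(\la)$ with $\nu+\rho \in C_+(F)$. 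Theorem \ref{Ilambda} then gives $T(\nu) \in \I(F_0(\la))$, establishing equality.

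Next I would carry out the nullity computation, which is the main new step. By Lemma \ref{minfacet}, a generator $T(\nu)$ of $\I(\la)$ with $\nu+\rho \in F_0(\la)$ has nullity equal to $k(F_0(\la))$, the number of reflection hyperplanes containing $F_0(\la)$. In type $A_{n-1}$ the hyperplanes have the form $y_a - y_b = k\ell$, and from Definition \ref{standardfacet} the coordinates of a point of $F_0(\la)$ partition into $s$ strict $\ell$-chains, where the $j$-th chain corresponds to the $j$-th column of $\la$ and has $\la_j^T$ elements. Two coordinates $y_a, y_b$ lie on a common hyperplane precisely when they belong to the same chain, so
\[ k(F_0(\la)) \;=\; \sum_{j=1}^{s} \binom{\la_j^T}{2}. \]
Summing row indices column by column gives $\sum_j \binom{\la_j^T}{2} = \sum_{(i,j)\in\la}(i-1) = r(\la)$, which equals $a(\la^T)$ by the Elias--Hogancamp formula recalled in Section \ref{sec:a-A}. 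Hence the nullity of any generator coincides with the value of the $a$-function on the cell $\la^T$.

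The final assertion, that $T(\nu) \in \I(\la)$ whenever $\nu+\rho \in D(\la)$, is exactly one direction of Theorem \ref{Ilambda}, so no further argument is required. The main obstacle throughout is the bookkeeping between the two distinct but equivalent cell descriptions: the $y$-chain formula of (\ref{mudef}) used in Proposition \ref{identifyideals} and the Shi tableau algorithm used in Proposition \ref{Dyalgorithm} must produce consistent two-sided cell labels $\la^T$ matching Ostrik's classification, and one must verify that the single right cell in $W_\ell^+$ attached to $\la^T$ is exactly the one populated by all facets $F \sim F_0(\la)$ together with the alcoves lying in $D(\la)$.
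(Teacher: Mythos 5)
Your overall route is the same as the paper's: locate $F_0(\la)$ in the two-sided cell labelled $\la^T$ via Proposition \ref{identifyideals}, compute the nullity of a generator through the minimal-facet property (Lemma \ref{minfacet}, equivalently Corollary \ref{modulenullity}) by counting the hyperplanes containing $F_0(\la)$, identify $\sum_j \binom{\la_j^T}{2}$ with the row number $r(\la)$ and hence with the $a$-function, and obtain the final assertion from Theorem \ref{Ilambda} together with Proposition \ref{tensorexample}. The nullity computation is correct as you state it.

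There is, however, a concrete misstep in your proof of the inclusion of Ostrik's ideal into $\I(F_0(\la))$: from $T(\nu)$ lying in Ostrik's ideal you infer that $\nu+\rho$ lies in an alcove of the cell labelled $\la^T$. That is false in general, since the ideal attached to a right cell $A$ is $Tilt_{\leq A}$ and contains the tilting modules of every cell $B\leq_R A$; for such a $\nu$ the tableau algorithm of Proposition \ref{Dyalgorithm} produces a tableau of a different shape $\mu$, Theorem \ref{Ilambda} then only places $T(\nu)$ in $\I(\mu)$, and you would still need $\I(\mu)\subseteq\I(\la)$ --- essentially the statement being proved. The repair is the argument implicit in the paper: by the definition of the weight-cell preorder ($\nu\leq_q\mu$ iff $T(\nu)$ is a summand of $T(\mu)\otimes Q$) combined with Ostrik's classification, $Tilt_{\leq A}$ is precisely the thick ideal generated by any single $T(\mu)$ whose highest weight lies in the weight cell corresponding to $A$; since Proposition \ref{identifyideals} (with Shi's identification of the two-sided cell and the uniqueness of the corresponding cell in the dominant chamber in type $A$) places the generators $T(\mu)$ with $\mu+\rho\in F_0(\la)$ in that cell, the equality $\I(F_0(\la))=Tilt_{\leq A}$ follows at once, without running the tableau algorithm on arbitrary members of the ideal.
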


\begin{proof}
Let $T(\nu)$ be a tilting module for which
$y=\nu+\rho\in F_0(\la)$. As $F_0(\la)$ is a minimal facet
by Proposition \ref{identifyideals}, $T(\la)=V(\la)$.
Hence its nullity can be determined from its dimension, see
Corollary \ref{modulenullity}. By construction of $F_0(\la)$
$y_i$ is congruent $y_j$ mod $\ell$ if $y_i$ and $y_j$ are
in the same column of $\la$. Hence the nullity
is given by 
$\sum_i \binom{\la_i^T-1}{2}$, which is equal to the row
number  $r(\la)$. This shows that the nullity of $F_0(\la)$ coincides
with the $a$-function.

 We have seen in  Proposition \ref{identifyideals}  that if $y\in F$
with $F\sim F_0(\la)$
then $\mu(y)=\la^T$. Hence any module $T(\nu)$ with $\nu+\rho\in F$
is also in $\I(\la)$. The last statement now
follows from  Proposition \ref{tensorexample}.
\end{proof}

\begin{remark} \label{a-fun} It was shown by Ostrik for $\ell$ a large enough prime (see  \cite{Ostrik-dim} \cite{Ostrik-support})  
that the dimension of any tilting module corresponding to a two-sided cell $A$
 is divisible by $\ell^{a(A)}$ and for any cell there exists a tilting module such that its dimension is not divisible by $\ell^{a(A)+1}$.
Hence the relationship between nullity and $a$-function in Theorem \ref{TypeAmain} would hold whenever one can show that the nullity of such a module is given by
its dimension.
So it would seem reasonable to expect this to be true in general. But see the example in  Section \ref{sec-modularA1}
where the nullity of a modular tilting module is not given by its dimension.  
\end{remark}

\begin{remark}\label{Dlambda-conjecture} We were able to give a fairly explicit description of the tensor ideals
thanks to the work of Shi \cite{Shi}. In the formulation as $D(F)$, see Remark \ref{D(F)}, our approach
might also be useful to characterize tensor
ideals for other Lie types. E.g. this formulation works for all rank 2 cases, but not for all ideals for type $D_4$,
see Remark \ref{nullity-rem}.

\end{remark}

\begin{remark} The thick ideal $N_k$ is the sum of the $I(\lambda)$ $(\lambda$ partition of $n$) for which the nullity is $\geq k$.
\end{remark}



\subsection{Regions for the modular case} \label{sec:modular-regions} We can use the results from the
previous subsection also for the modular case; to  conform with the usual
notations, we replace $\ell$ by the prime $p$. In addition, we can use
the ``telescoping effect" in Lemma \ref{minfacet} (b) and
Proposition \ref{tensorexample} (b) to construct infinitely many
thick ideals. We will also extend our conjectural description of these ideals
and the related cells  to this case. We use the same notation
as in the previous subsection. Let $F_0(\la)$ be the standard minimal facet
associated to the Young diagram $\la$, and let $r$ be an integer, $r\geq 1$.
Then we construct the
region $D_r(\la)$ by $D_1(\la)=D(\la)$ and

\[ 
D_r(\la)\ =\ \bigcup_{F \sim F_0(\la)} (\bigcup_{\la\in F} \lar+X^+), \quad r>1
\]
where $\lar$ is as in Lemma \ref{minfacet} (and $\la$ is interpreted as a weight), and the first union goes 
over all strongly minimal facets $F$ which are equivalent to $F_0(\la)$. The following proposition
again follows from Lemma \ref{minfacet} and Proposition \ref{tensorexample}.

\begin{proposition} If $\mu+\rho$ is in $D_r(\la)$ its nullity is at least equal
to $r|R^+|+k(F_0(\la))$.
\end{proposition}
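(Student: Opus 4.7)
The plan is to unwind the definition of $D_r(\la)$ and combine Lemma \ref{minfacet}(b) with Proposition \ref{tensorexample}(b). Suppose $\mu+\rho \in D_r(\la)$. Then by construction there exist a strongly minimal facet $F \sim F_0(\la)$ and a weight $\nu$ with $\nu+\rho \in F$ such that $\mu - \nu^{(r)} \in X^+$; the bound on the nullity of $T(\mu)$ will follow by applying Lemma \ref{minfacet}(b) to $\nu$ and $F$ and then closing under the thick-ideal construction of Proposition \ref{tensorexample}(b).

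A preliminary step is to verify that $k(F)=k(F_0(\la))$ and that minimality is preserved under $\sim$. Since each facet $F_t$ produced by a standard tableau of shape $\la$ is cut out by the equations $y_{i(r,c)}-y_{i(r+1,c)}=\ell$ indexed by vertically adjacent pairs of boxes of $\la$, both the number of defining hyperplanes and the stabilizer group $W(F_t)$ (up to conjugacy by a finite Weyl group element permuting coordinates) depend only on the shape $\la$. Minimality of every such $F_t$ extends from Proposition \ref{identifyideals} by the same chain-counting argument, since any tableau of shape $\la$ arranges the coordinates of a generic point of $F_t$ into strict $\ell$-chains whose sizes are the column lengths of $\la$.

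Given this, Lemma \ref{minfacet}(b) applied to $\nu$ and the minimal facet $F$ shows that $T(\nu^{(r)})$ is simple with nullity exactly $r|R^+|+k(F)=r|R^+|+k(F_0(\la))$. Proposition \ref{tensorexample}(b) then places $T(\mu)$ inside the thick ideal $\I(F^{(r)})$ generated by the $T(\nu^{(r)})$ with $\nu+\rho\in F$, since $\mu\in\nu^{(r)}+X^+$, and guarantees that every module in this ideal has nullity at least $r|R^+|+k(F)$. This is exactly the required bound.

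The only non-mechanical ingredient is the preliminary observation that $k$ is constant and minimality is preserved along the $\sim$-equivalence class; once that is in hand the proof reduces to a direct invocation of the two preceding results, so I do not anticipate a substantive obstacle.
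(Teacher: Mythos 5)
Your argument is correct and is essentially the paper's own proof, which consists of exactly this reduction to Lemma \ref{minfacet}(b) and Proposition \ref{tensorexample}(b); your extra observation that $k(F)=k(F_0(\la))$ for the facets occurring in $D_r(\la)$ (the stabilizer being determined by the column chains of the tableau) is the right point to make explicit. One remark: you do not need, and Proposition \ref{identifyideals} does not actually give, minimality of \emph{every} tableau facet $F_t$ — the union defining $D_r(\la)$ runs only over \emph{strongly} minimal facets, and strong minimality implies minimality (facets in the same orbit have conjugate, hence isomorphic, stabilizers), so Lemma \ref{minfacet}(b) applies directly without your shaky extension of the chain-counting argument.
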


\begin{question} Is every thick ideal in type A a sum of ideals of the form $D_r(\lambda)$? 
\end{question}

An affirmative answer would give a geometric description of the right $p$-cells in the affine Weyl group.

\begin{remark}\label{modularguessremark} 1. It is easy to see for $r\geq 1$
that $D_r(\emptyset)=D_{r-1}([1^k])$. We will identify these regions in the following.

2. For type $A_1$ the thick ideals are generated by the Steinberg modules.
It is easy to see that our regions do describe the thick ideals in this case.
An explicit description of the tensor ideals has also been given for type $A_2$,
see section \ref{sec:a2}. One checks easily that this is compatible with our regions.
\end{remark}

\subsubsection{The modular $A_2$-case} \label{sec:a2}

By \cite[Example 15]{Andersen-cells} \cite[Chapter 10]{Jensen-Phd} the thick ideals are given by the the Steinberg cells $<St_r> = T(\leq \underline{c}_1^r)$ and the tilting modules associated to the weight cell \[ \underline{c}_2^r = Y_r \setminus (Y_{r+1} \cup \underline{c}_1^r)  \ .\] A beautiful picture illustrating the $p= 5$-case can be found in  \cite{Andersen-cells} .

In the $\mathfrak{sl_2}$-case we have $<St_r> = N_{r|R^+|}$, so in order to determine the $k$-negligible ideals it suffices to find one weight $\lambda$ in each $c_2^r$ such that the associated tilting module is a Weyl module and compute its nullity with the dimension formula. All in all we obtain

\begin{enumerate}
\item The Steinberg ideals with nullity $3s$ for $s=0,1,2,\ldots$ and
\item for $s=0,1,2,\ldots$ the ideals generated by the $T(\lambda)$ with \[ (\lambda+\rho, \rho)= p^{s+1}, \  (\lambda+\rho, \alpha_1)= rp^s, 1\leq r <p \] of nullity $3s+1$.
\end{enumerate}


\section{Questions}\label{sec:questions}

\subsection{Extension to more general categories} It would be interesting to extend the definiton of $k$-negligible morphisms or ideals to other monoidal categories or supercategories.

\subsection{$q$-Deligne categories at roots of unity} $k$-negligible ideals can be also defined for the $q$-versions of the Deligne categories in type $ABCD$. For generic $q$ the classification of tensor ideals and thick ideals for these categories is the same as for the classical Deligne categories \cite{Brundan-skein} \cite{Coulembier}, but is unknown for $q$ a root of unity.  

\subsection{Modified traces} Currently we define modified traces only if the maximal ideal has  one generator. It would be interesting to define modified traces if the maximal ideal is not principal. We also expect that our construction defines modified traces for other categories. 

\subsection{$a$-Function in the quantum and modular case} We expect an equality between the nullity and the $a$-function in the quantum case for $\ell$ large enough for all types (i.e. $N_k$ would correspond to the union of weight cells with $a$-value $\geq k$ for $\ell$ a large enough prime). There does not seem to be an accepted definition for the $a$-function in the modular case if the Kazhdan-Lusztig basis is replaced with the $p$-canonical basis. One might wonder if there is again a connection with the nullity.



\section*{Acknowledgements}

The work was started while the first author was a Postdoc at the MPI in Bonn and the second author a visiting scientist. We thank the MPI for the ideal working enviroment. We would also like to thank Henning Haar Andersen, Kevin Coulembier, Pavel Etingof, Jens Carsten Jantzen, Thorge Jensen, Hankyung Ko, Catharina Stroppel and Daniel Tubbenhauer for helpful discussions. We would like to thank the referee for a very helpful review. The research of T.H. was partially funded by the Deutsche Forschungsgemeinschaft (DFG, German Research
Foundation) under Germany's Excellence Strategy – EXC-2047/1 – 390685813.



\end{document}